
\documentclass[letterpaper]{amsart}
\pdfoutput=1
\newtheorem{theorem}{Theorem}[section]

\newtheorem{proposition}[theorem]{Proposition}
\newtheorem{corollary}[theorem]{Corollary}

\theoremstyle{definition}
\newtheorem{definition}[theorem]{Definition}

\theoremstyle{remark}

\usepackage{hyperref}
\usepackage{graphicx}
\usepackage{color}
\usepackage{amsmath}
\usepackage{amsfonts}
\usepackage{euler}
\usepackage{amssymb}
\usepackage{epsfig}
\usepackage{rotating}
\usepackage{graphics}
\newcommand{\be}{\begin{equation}}

\newcommand{\ee}{\end{equation}}

\definecolor{blue}{cmyk}{1.,1.,0.,0.63}
\definecolor{red}{cmyk}{0.,1.,1.,0.63}
\definecolor{green}{cmyk}{1.,0.,1.,0.63}
\definecolor{black}{cmyk}{1.,1.,1.,1.}

\newcommand{\blue}{\textcolor{blue}}

\newcommand{\red}{\textcolor{red}}

\newcommand{\Om}{\Omega}

\newcommand{\K}{\Upsilon}

\newcommand{\om}{\omega}

\newcommand{\f}{\varphi}

\newcommand{\eps}{\varepsilon}

\newcommand{\dz}{\wedge}

\newcommand{\C}{{\bf C }}

\newcommand{\R}{{\bf R }}

\renewcommand{\K}{{\bf K }}

\newcommand{\ba}{\begin{array}}

\newcommand{\ea}{\end{array}}

\newcommand{\beq}{\begin{eqnarray}}

\newcommand{\eeq}{\end{eqnarray}}

\newtheorem{lm}{lemma}

\newtheorem{thee}{theorem}

\newtheorem{proo}{proposition}

\newtheorem{co}{corollary}

\newtheorem{rem}{remark}

\newtheorem{deff}{definition}

\newcommand{\bd}{\begin{deff}}

\newcommand{\ed}{\end{deff}}

\newcommand{\bl}{\begin{lm}}

\newcommand{\el}{\end{lm}}

\newcommand{\bp}{\begin{proo}}

\newcommand{\ep}{\end{proo}}

\newcommand{\bt}{\begin{thee}}

\newcommand{\et}{\end{thee}}

\newcommand{\bc}{\begin{co}}

\newcommand{\ec}{\end{co}}

\newcommand{\brm}{\begin{rem}}

\newcommand{\erm}{\end{rem}}

\newcommand{\der}{{\rm d}}

\newcommand{\sgn}{\mathrm{sgn}}

\hyphenation{Pa-wel}

\hyphenation{Nu-row-ski}

\hyphenation{And-rzej}

\hyphenation{Traut-man}

\hyphenation{Je-rzy}

\hyphenation{Le-wan-dow-ski}

\hyphenation{Car-tan}

\hyphenation{Car-tan-Pet-rov-Pen-rose}

\hyphenation{Pen-rose}

\hyphenation{or-tho-go-nal}

\hyphenation{comp-lex}

\hyphenation{Pet-rov}

\hyphenation{Euc-lid-ean}

\hyphenation{ge-om-etry}

\hyphenation{Rie-man-nian}

\hyphenation{Ein-stein}

\hyphenation{Ka-te-dra}

\hyphenation{Me-tod}

\hyphenation{Ma-te-ma-tycz-nych}

\hyphenation{Fi-zy-ki}

\hyphenation{Uni-wer-sy-tet}

\hyphenation{War-szaw-ski}

\hyphenation{War-sza-wa}


\usepackage{t1enc}
\def\frak{\mathfrak}

\newcommand{\newc}{\newcommand}

\renewcommand{\exp}{\operatorname{exp}}
\newcommand{\id}{\operatorname{id}}

\renewcommand{\Re}{\operatorname{Re}}

\let\ccdot\cdot
\def\cdot{\hbox to 2.5pt{\hss$\ccdot$\hss}}

\newc{\aR}{\mbox{\boldmath{$ R$}}}
\newc{\aS}{\mbox{\boldmath{$ S$}}}
\newc{\aT}{\mbox{\boldmath{$ T$}}}
\newc{\aW}{\mbox{\boldmath{$ W$}}}

\newc{\aK}{\mbox{\boldmath{$ K$}}}
\newc{\aL}{\mbox{\boldmath{$ L$}}}


\newcommand{\bbC}{\mathbb{C}}

\usepackage{amssymb}
\usepackage{amscd}



\let\f=\varphi

\newcommand{\bma}{\begin{pmatrix}}
\newcommand{\ema}{\end{pmatrix}}







\newc{\obstrn}[2]{B^{#1}_{#2}}



\newcommand{\rpl}                         
{\mbox{$
\begin{picture}(12.7,8)(-.5,-1)
\put(0,0.2){$+$}
\put(4.2,2.8){\oval(8,8)[r]}
\end{picture}$}}

\newcommand{\lpl}                         
{\mbox{$
\begin{picture}(12.7,8)(-.5,-1)
\put(2,0.2){$+$}
\put(6.2,2.8){\oval(8,8)[l]}
\end{picture}$}}

\usepackage{ifthen}

\newc{\tensor}[1]{#1}
\newc{\Mvariable}[1]{\mbox{#1}}
\newc{\down}[1]{{}_{#1}}
\newc{\up}[1]{{}^{#1}}


%
%
%
\newc{\JulyStrut}{\rule{0mm}{6mm}}
\newc{\midtenPan}{\mbox{\sf S}}
\newc{\midten}{\mbox{\sf T}}
\newc{\midtenEi}{\mbox{\sf U}}
\newc{\ATen}{\mbox{\sf E}}
\newc{\BTen}{\mbox{\sf F}}
\newc{\CTen}{\mbox{\sf G}}


\def\sideremark#1{\ifvmode\leavevmode\fi\vadjust{\vbox to0pt{\vss
 \hbox to 0pt{\hskip\hsize\hskip1em
 \vbox{\hsize3cm\tiny\raggedright\pretolerance10000
 \noindent #1\hfill}\hss}\vbox to8pt{\vfil}\vss}}}%

                                                   %


\numberwithin{equation}{section}





\newcounter{romenumi}
\newcommand{\labelromenumi}{(\roman{romenumi})}




\begin{document}

\title[On degenerate para-CR structures]{
On degenerate para-CR structures:
\\
Cartan reduction and homogeneous models}


\vskip 1.truecm
\author{Jo\"el Merker} \address{Laboratoire de Math\'ematiques d'Orsay,
CNRS, Universit\'e Paris-Saclay, 91405 Orsay Cedex,
France}

\email{joel.merker@universite-paris-saclay.fr}
\author{Pawe\l~ Nurowski} \address{Centrum Fizyki Teoretycznej,
Polska Akademia Nauk, Al. Lotnik\'ow 32/46, 02-668 Warszawa, Poland}
\email{nurowski@cft.edu.pl}

\thanks{
2020 {\sl Mathematics Subject Classification}. Primary: 58A15, 53A55, 32V05.
Secondary: 53C10, 58A30, 34A26, 34C14, 53-08.
\\
${}\ \ \ \ \ $
This work was supported
in part by the Polish National Science Centre (NCN) 
via the grant number 2018/29/B/ST1/02583.}


\begin{abstract}
Motivated by recent works in Levi degenerate CR geometry, 
this article endeavours to study the wider and more flexible 
{\sl para-CR structures} for which the constraint of 
invariancy under complex conjugation is relaxed.
We consider $5$-dimensional para-CR structures 
whose Levi forms are of constant rank $1$ 
and that are $2$-nondegenerate both with respect to
parameters and to variables. 
Eliminating parameters, such structures may be represented
modulo point transformations
by pairs of PDEs $z_y = F(x, y, z, z_x)$ $\,\,\&\,\,$ 
$z_{xxx} = H(x, y, z, z_x, z_{xx})$, 
with $F$ independent of $z_{xx}$ and $F_{z_xz_x} \neq 0$,
that are completely integrable $D_x^3 F = \Delta_y H$,

Performing at an advanced level Cartan's method of equivalence,
we determine {\em all} concerned homogeneous models,
together with their symmetries:

\smallskip\noindent
{\bf (i)}\,\, 
$z_y = \tfrac14 (z_x)^2\quad \&\quad z_{xxx}=0$;

\smallskip\noindent
{\bf (ii)}\,\, 
$z_y = \tfrac14 (z_x)^2\quad \& \quad z_{xxx}=(z_{xx})^3$;

\smallskip\noindent
{\bf (iiia)}\,\, 
$z_y = \tfrac14 (z_x)^b\,\, \& \,\,z_{xxx} = 
(2-b)\frac{(z_{xx})^2}{z_x}$ with $z_x > 0$ 
for any real $b \in [1, 2)$;

\smallskip\noindent
{\bf (iiib)}\,\, 
$z_y = f(z_x)\quad \& \quad z_{xxx} = h(z_x)\big(z_{xx}\big)^2$, 
where the function $f$ is determined by the implicit equation:
\[
\big(z_x^2+f(z_x)^2\big)
\mathrm{exp}
\Big(
2b\,\mathrm{arctan}\tfrac{bz_x-f(z_x)}{z_x+bf(z_x)}
\Big)
\,=\,
1+b^2
\] 
and where, for any real $b > 0$:
\[
h(z_x)
\,:=\,
\frac{(b^2-3)z_x-4bf(z_x)}{\big(f(z_x)-bz_x\big)^2}.
\]
\end{abstract}
\maketitle

\vspace{-1truecm}
\tableofcontents
\newcommand{\bbS}{\mathbb{S}}
\newcommand{\bbR}{\mathbb{R}}
\newcommand{\sog}{\mathbf{SO}}
\newcommand{\glg}{\mathbf{GL}}
\newcommand{\slg}{\mathbf{SL}}
\newcommand{\og}{\mathbf{O}}
\newcommand{\soa}{\frak{so}}
\newcommand{\gla}{\frak{gl}}
\newcommand{\sla}{\frak{sl}}
\newcommand{\sua}{\frak{su}}
\newcommand{\dr}{\mathrm{d}}
\newcommand{\sug}{\mathbf{SU}}
\newcommand{\cspg}{\mathbf{CSp}}
\newcommand{\gat}{\tilde{\gamma}}
\newcommand{\Gat}{\tilde{\Gamma}}
\newcommand{\thet}{\tilde{\theta}}
\newcommand{\Thet}{\tilde{T}}
\newcommand{\rt}{\tilde{r}}
\newcommand{\st}{\sqrt{3}}
\newcommand{\kat}{\tilde{\kappa}}
\newcommand{\kz}{{K^{{~}^{\hskip-3.1mm\circ}}}}
\newcommand{\bv}{{\bf v}}
\newcommand{\di}{{\rm div}}
\newcommand{\curl}{{\rm curl}}
\newcommand{\cs}{(M,{\rm T}^{1,0})}
\newcommand{\tn}{{\mathcal N}}
\newcommand{\ten}{{\Upsilon}}

\section{Introduction}
\label{introduction}

In~{\cite{Nurowski-Sparling-2003}},
the second-named author and Sparling explored in
depth the close relationships between the geometry associated with
second order ordinary differential equations defined modulo point
transformations of variables, and the geometry of three-dimensional
Cauchy-Riemann (CR) structures, 
{\em cf.} also~{\cite{Nurowski-Tafel-1988, Nurowski-2005,
Godlinski-Nurowski-2009}}.
The goal of this article is to
explain how certain {\em degenerate} five-dimensional CR structures
give rise, analogously, to certain closely tied pairs of PDEs, 
and then, to find all the concerned homogeneous geometries,
by employing Cartan's method of equivalence.

Using a purely Lie-theoretical method, 
in their 2008 extensive {\em Acta
Mathematica} paper~{\cite{Fels-Kaup-2008}}, Fels-Kaup classified all
homogeneous $2$-nondegenerate constant Levi rank $1$ hypersurfaces
$M^5 \subset \C^3$.  Such hypersurfaces are termed `{\sl
$\mathfrak{C}_{2,1}$ hypersurfaces}', and our pairs of PDEs in
question are issued from them by 
parameters elimination ({\em see} below), after
complexifying and relaxing the invariancy under complex conjugation.

A decade ago, no Cartan-type reduction to an $\{e\}$-structure
bundle was available for $\mathfrak{C}_{2,1}$ hypersurfaces.  Since
then, the Cartan(-Tanaka) method was by applied by
Medori-Spiro~{\cite{Medori-Spiro-2014}},
and in a parametric way by
Pocchiola, Foo and the first-named
author~{\cite{Merker-Pocchiola-2018, Foo-Merker-2019}}, who found two
primary (relative) differential invariants $W_0$ and $J_0$.
The identical vanishing $W_0(M) \equiv 0 \equiv J_0(M)$
characterizes {\sl flatness}, namely biholomorphic equivalence of $M$
to the flat model which is graphed in $\C^3 \ni (z, \zeta, w)$ as
$\Re\, w = \big( z \overline{z} + \frac{1}{2} z^2\overline{\zeta} +
\frac{1}{2} \overline{z}^2 \zeta \big) \big/ \big( 1 - \zeta
\overline{\zeta} \big)$, which was set up by the firt-named author and
Gaussier~{\cite{Gaussier-Merker-2003}}, and which was shown by
Fels-Kaup~{\cite{Fels-Kaup-2007}} to be locally biholomorphic 
to the tube $S^2 \times i\R^3 \subset \C^3$ over the
future light cone $S^2 := \big\{ x\in\R^3 \colon\, x_1^2+x_2^2 =
x_3^2,\,\, x_3>0 \big\}$. Two recent 
prepublications~{\cite{Chen-Foo-Merker-Ta-2019, Foo-Merker-Ta-2019}}
construct Poincar\'e-Moser normal forms
for $\mathfrak{C}_{2,1}$ hypersurfaces.

Because a forthcoming survey~{\cite{Merker-Nurowski-2020}}
will expose more complete historical and synthetic aspects,
we now directly come to the heart of the matter,
{\em i.e.} we start by presenting the PDE systems 
studied in this article. Then we perform a
precise description of the contents of our contribution,
relating it to CR and affine geometry.

Given a $\mathcal{C}^\omega$ real hypersurface $M^5 \subset \C^3$ of
complex-graphed equation: 
\[
w
\,=\, 
Q(z_1, z_2, \overline{z}_1,\overline{z}_2,\overline{w})
\]
obtained by solving for $w$ a real
implicit equation $\rho(z_1, z_2, w, \overline{z}_1, \overline{z}_2,
\overline{w}) = 0$, one can forget about complex conjugation, work
over the field $\K = \R$ or $\K = \C$, and consider instead, in
coordinates $(x,y,z, a,b,c)$ a so-called {\em submanifold of
solutions} $\mathcal{M} \subset \K_{x,y,z}^{2+1} \times
\K_{a,b,c}^{2+1}$ having two equivalent equations:
\[
z
\,=\,
Q(x,y,a,b,c)
\ \ \ \ \ \ \ \ \ \ \ \ \ \ \ \ \ \ \ \
\text{and}
\ \ \ \ \ \ \ \ \ \ \ \ \ \ \ \ \ \ \ \
c
\,=\,
P(a,b,x,y,z).
\]
One thinks that $(x,y,z)$ are the {\sl variables},
while $(a,b,c)$ are the {\sl parameters}.
Two Levi forms, with respect to parameters
and with respect to variables, can be defined
They are represented by two $2 \times 2$ matrices:
\[
\left(\!
\begin{array}{cc}
\frac{-Q_cQ_{xa}+Q_aQ_{xc}}{Q_c^2} &
\frac{-Q_cQ_{xb}+Q_bQ_{xc}}{Q_c^2}
\\
\frac{-Q_cQ_{ya}+Q_aQ_{yc}}{Q_c^2} &
\frac{-Q_cQ_{yb}+Q_bQ_{yc}}{Q_c^2}
\end{array}
\!\right)
\ \ \ \ \ 
\text{and}
\ \ \ \ \ 
\left(\!
\begin{array}{cc}
\frac{-P_zP_{ax}+P_xP_{az}}{P_z^2} &
\frac{-P_zP_{ay}+P_yP_{az}}{P_z^2}
\\
\frac{-P_zP_{bx}+P_xP_{bz}}{P_z^2} &
\frac{-P_zP_{by}+P_yP_{bz}}{P_z^2}
\end{array}
\!\right).
\]
Furthermore, these two Levi forms are linked in a 
way~{\cite[Lm.~9.1]{Merker-Nurowski-2018}} that guarantees:
\[
{\rm rank}\,
{\rm Levi}_{\rm par}
(Q)
\,=\,
{\rm rank}\,
{\rm Levi}_{\rm var}
(P).
\]
As in~{\cite{Merker-Pocchiola-2018}}, 
we will assume that the Levi forms have
(common) constant rank $1$.

Also, similarly as for CR manifolds, two 
{\em nonequivalent} notions of $2$-nondegeneracy,
with respect to parameters and to variables, may be 
defined~{\cite[Sections~15, 20]{Merker-Nurowski-2018}}.
They are expressed invariantly by:
\[
0
\,\neq\,
\left\vert\!
\begin{array}{ccc}
Q_a & Q_b & Q_c
\\
Q_{xa} & Q_{xb} & Q_{xc}
\\
Q_{xxa} & Q_{xxb} & Q_{xxc}
\end{array}
\!\right\vert
\ \ \ \ \ \ \ \ \ \ \ \ \ \ \ \ \ \ \ \
\text{and}
\ \ \ \ \ \ \ \ \ \ \ \ \ \ \ \ \ \ \ \
0
\,\neq\,
\left\vert\!
\begin{array}{ccc}
P_x & P_y & P_z
\\
P_{ax} & P_{ay} & P_{az}
\\
P_{aax} & P_{aay} & P_{aaz}
\end{array}
\!\right\vert.
\]

As Segre did in~{\cite{Segre-1931-a}},
from the three equations:
\[
z
\,=\,
Q(x,y,a,b,c),
\ \ \ \ \ \ \ \ \ \
z_x
\,=\,
Q_x(x,y,a,b,c),
\ \ \ \ \ \ \ \ \ \
z_{xx}
\,=\,
Q_{xx}(x,y,a,b,c),
\]
assuming $2$-nondegeneracy with respect to parameters,
we can solve $(a,b,c)$ and replace them in $z_y = Q_y$,
$z_{xxx} = Q_{xxx}$, obtaining a completely integrable
system of two PDEs:
\begin{equation}
\label{two-PDEs}
z_y
\,=\,
F(x,y,z,z_x,z_{xx})
\ \ \ \ \
\&
\ \ \ \ \
z_{xxx}
\,=\,
H(x,y,z,z_x,z_{xx}).
\end{equation}
It is elementary to verify~{\cite[Prp.~23.1]{Merker-Nurowski-2018}
that the rank of the
Levi form of the submanifold of solutions was $1$
if and only if:
\[
0
\,\equiv\,
F_{z_{xx}}.
\]
So we do assume that $F$ is independent of $z_{xx}$.
It is also elementary to 
verify~{\cite[Prp.~26.2]{Merker-Nurowski-2018}
that the submanifold of solutions was $2$-nondegenerate
with respect to {\em variables} if and only if:
\[
0
\,\neq\,
F_{z_xz_x}.
\]
The degenerate branch $F_{z_xz_x} \equiv 0$ will not
be studied in this article, and we will constantly
assume $F_{z_{xx}} \equiv 0 \neq F_{z_xz_x}$.

The graphed model
inspired from~{\cite{Gaussier-Merker-2003}},
rewritten $z + c = \frac{2xa + x^2b + a^2y}{1-yb}$,
conducts to the model PDE system:
\[
z_y
\,=\,
\tfrac{1}{4}\,
(z_x)^2
\ \ \ \ \
\&
\ \ \ \ \
z_{xxx}
\,=\,
0.
\]
In Subsection~{\ref{flatm}}, we show that its Lie group of
decoupled symmetries~{\cite{Merker-2008, Hill-Nurowski-2010}}:
\[
(x,y,z,\,a,b,c)
\,\longmapsto\,\,
\big((x'(x,y,z),y'(x,y,z),z'(x,y,z),\,
a'(a,b,c),b'(a,b,c),c'(a,b,c)\big),
\]
which are point equivalences of the PDE system,
is isomorphic to $\sog(3,2)$.

Passing to the general case, 
introducing the two total differentiation operators pulled-back
to the PDE system:
\[
D
\,:=\,
\partial_x
+
p\,\partial_z
+
r\,\partial_p
+
H\,\partial_r
\ \ \ \ \
\&
\ \ \ \ \
\Delta
\,:=\,
\partial_y
+
F\,\partial_z
+
DF\,\partial_p
+
D^2F\,\partial_r,
\]
the complete integrability
expresses as $D^3F = \Delta H$, and 
guarantees~{\cite[\S~1]{Merker-2008}} that the general
solution is of the form $Q(x,y,a,b,c)$. 

Forgetting about submanifolds of solutions, we launch
Cartan's method
by defining a $2$-nondegenerate para-CR 
structure on a real $5$-manifold $M \ni (x,y,z,p,r)$ associated 
with the above two PDEs~({\ref{two-PDEs}})
as an equivalence class of $1$-forms
modulo point equivalences in terms of 
an {\sl initial} coframe of (contact) $1$-forms,
together with {\sl lifted $1$-forms}, `rotated' by
an initial $G$-structure:
\[
\aligned
\om^1
&
\,:=\,
\der z-p\der x-F\der y,
\\
\om^2
&
\,:=\,
\der p-r\der x-DF\der y,
\\
\om^3
&
\,:=\,
\der r-H\der x-D^2F\der y,
\\
\om^4
&
\,:=\,
\der x,
\\
\om^5
&
\,:=\,
\der y,
\endaligned
\ \ \ \ \ \ \ \ \ \ \ 
\bma
\theta^1\\\theta^2\\\theta^3\\\theta^4\\\theta^5\ema 
\,:=\,
\bma f^1&0&0&0&0
\\ 
f^2&\rho{\rm e}^{\phi}&f^4&0&0
\\ 
f^5&f^6&f^7&0&0
\\ 
\bar{f}{}^2&0&0&\rho{\rm e}^{-\phi}&\bar{f}{}^4
\\
\bar{f}{}^5&0&0&\bar{f}{}^6&\bar{f}{}^7
\ema
\bma
\om^1
\\
\om^2
\\
\om^3
\\
\om^4
\\
\om^5
\ema.
\]
Similarly to the CR case~({\cite{Merker-Pocchiola-2018,
Foo-Merker-2019}}), we perform several
torsion normalizations, which lead us to change the initial coframe
on $M$ into:
\[
\bma
\om^1
\\
\om^2
\\
\om^3
\\
\om^4
\\
\om^5
\ema
\,\,\,\longmapsto\,\,\,
\bma 
-1&0&0&0&0\\0&1&0&0&0
\\
\frac{(2H_r^2+9H_p-3DH_r)}{18}&\frac{H_r}{3}&-1&0&0
\\
0&0&0&1&F_p
\\
\frac{3F_{pp}F_{pppp}-5F_{ppp}^2}{18F_{pp}^2}&0&0&\frac{F_{ppp}}{3F_{pp}}&\frac{F_{ppp}F_p-3F_{pp}^2}{3F_{pp}}
\ema
\cdot
\bma
\om^1
\\
\om^2
\\
\om^3
\\
\om^4
\\
\om^5
\ema,
\]
and we invariantly reduce the $G$-structure to only $4$ parameters
$\rho$, $\phi$, $f^2$, $\bar{f}^2$\,\,---\,\,plus 
one extra parameter $u_1$\,\,---, the bar having
nothing to do with complex conjugation except some
analogy link with the CR computations 
in~{\cite{Merker-Pocchiola-2018}}:
\[
\bma
\theta^1\\\theta^2\\\theta^3\\\theta^4\\\theta^5\ema 
\,:=\,
\bma 
\rho^2&0&0&0&0
\\
f^2&\rho{\rm e}^\phi&0&0&0
\\
\frac{(f^2)^2}{2\rho^2}&\frac{f^2{\rm e}^\phi}{\rho}&{\rm e}^{2\phi}&0&0\\
\bar{f}{}^2&0&0&\rho{\rm e}^{-\phi}&0
\\
-\frac{(\bar{f}{}^2)^2}{2\rho^2}&0&0&
\frac{-\bar{f}{}^2{\rm e}^{-\phi}}{\rho}&{\rm e}^{-2\phi}\ema
\cdot
\bma
\om^1
\\
\om^2
\\
\om^3
\\
\om^4
\\
\om^5
\ema.
\]
After computational cleaning, we obtain our first result,
which happens to be the para-CR analog 
of~{\cite[Thm.~13.1]{Foo-Merker-2019}}.

\begin{theorem}
On the bundle $\mathcal{G}^9 = M^5 \times G^4$ with $M^5 \ni (x, y, z,
p, r)$ times $\R^4 \ni (\rho, \phi, f_2, \overline{f}_2)$, there exist
four $1$-forms $\Omega_1$, $\Omega_2$, $\Omega_3$, $\Omega_4$ with 
$\theta^1$, $\theta^2$, $\theta^3$, $\theta^4$, $\theta^5$, $\Omega_1$, $\Omega_2$,
$\Omega_3$, $\Omega_4$ linearly independent at every point which
satisfy the following para-CR invariant exterior differential system:
\begin{equation}
\label{d-theta-I-vert}
\begin{aligned}
\der\theta^1
&
\,=\,
-\,\theta^1\dz\Om_1+\theta^2\dz\theta^4,
\\
\der\theta^2
&
\,=\,
\theta^2\dz(\Om_2-\tfrac12\Om_1)
-
\theta^1\dz\Om_3+\theta^3\dz\theta^4,
\\
\der\theta^3
&
\,=\,
2\theta^3\dz\Om_2-\theta^2\dz\Om_3
+
\tfrac{{\rm e}^{3\phi}}{\rho^3}I^1\,\theta^1\dz\theta^4
+
\tfrac{{\rm e}^{-\phi}}{\rho}I^3\,\theta^2\dz\theta^3+
\\
&
\ \ \ \ \
\tfrac{1}{8\rho^3}\,
\Big(2\mathrm{e}^\phi\bar{f}{}^2I^3{}_{|5}
+
\rho(I^3{}_{|52}+2I^3{}_{|4})-4\mathrm{e}^{-\phi}f^2I^3
\Big)\,\theta^1\dz\theta^3,
\\
\der\theta^4
&
\,=\,
-\,\theta^2\dz\theta^5-\theta^4\dz(\tfrac12\Om_1+\Om_2)
-
\theta^1\dz\Om_4,
\\
\der\theta^5
&
\,=\,
-2\theta^5\dz\Om_2+\theta^4\dz\Om_4+\tfrac{{\rm e}^{-3\phi}}{\rho^3}I^2\,\theta^1\dz\theta^2-\tfrac{{\rm e}^{\phi}}{2\rho}I^3{}_{|5}\,\theta^4\dz\theta^5+
\\
&
\ \ \ \ \
\tfrac{1}{8\rho^3}\Big(2\mathrm{e}^\phi\bar{f}{}^2I^3{}_{|5}+\rho(I^3{}_{|52}+2I^3{}_{|4})-4\mathrm{e}^{-\phi}f^2I^3\Big)\,\theta^1\dz\theta^5,
\end{aligned}
\end{equation}
where $I^1$, $I^2$, $I^3$ are explicit relative differential
invariants on the base $M$:
\[
\aligned
I^1
&
\,:=\,
-\,
\tfrac{1}{54}\,
\big(
9D^2H_r-27DH_p-18DH_r H_r
+
18H_p H_r+4H_r^3+54H_z
\big),
\\
I^2
&
\,:=\,
\frac{40F_{ppp}^3-45 F_{pp}F_{ppp}F_{pppp}
+
9F_{pp}^2F_{ppppp}}{54\,F_{pp}^3},
\\
I^3
&
\,:=\,
\frac{2F_{ppp}+F_{pp}H_{rr}}{3\,F_{pp}},
\endaligned
\]
and where $(\cdot)\vert_i$ for $i = 1, \dots, 5$ denote directional
derivatives along the vector fields $X_i$ dual to $\theta^i$.
\end{theorem}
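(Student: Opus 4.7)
The plan is to continue Cartan's equivalence procedure from the intermediate $G$-structure already displayed, whose total space is $M^5$ times the four-dimensional group of parameters $(\rho,\phi,f^2,\bar{f}^2)$ and in which one residual parameter $u_1$ appears inside the candidate connection $1$-forms. I would compute $\der\theta^i$ for $i=1,\dots,5$ on this bundle, rewrite each in the generic Cartan form $\der\theta^i = -\,\Omega^i{}_j \dz \theta^j + \tfrac12\,T^i{}_{jk}\,\theta^j\dz\theta^k$, and identify four independent linear combinations of $\tfrac{\der\rho}{\rho}$, $\der\phi$, $\der f^2$, $\der\bar{f}^2$ (with appropriate semi-basic corrections) as the four target $1$-forms $\Omega_1,\Omega_2,\Omega_3,\Omega_4$ of the theorem. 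The standard absorption freedom $\Omega_a \mapsto \Omega_a + \lambda^i_a\theta^i$, together with the remaining parameter $u_1$, is then used to kill every torsion coefficient that can be killed; what survives is, by definition, essential torsion.

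The key computational input is the pair of total differential operators $D$ and $\Delta_y$ attached to the PDE system, together with the complete integrability condition $D^3F=\Delta_y H$, which guarantees that certain $\omega^4\dz\omega^5$ obstructions drop out of $\der\omega^i$. Once the bulk absorption is performed, inspection of $\der\theta^1$ and $\der\theta^4$ pins down $\Omega_1$ and $\Omega_2$ without producing any invariant; inspection of $\der\theta^2$ fixes $\Omega_3$ and $\Omega_4$, consumes $u_1$, and leaves no new invariant; finally, inspection of $\der\theta^3$ and $\der\theta^5$ produces three families of unabsorbable torsion whose scalar coefficients, evaluated on the identity section $\rho=1$, $\phi=0$, $f^2=\bar{f}^2=0$, are to be matched against the rational expressions displayed for $I^1$, $I^2$, $I^3$. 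The conformal prefactors $\mathrm{e}^{3\phi}/\rho^3$, $\mathrm{e}^{-3\phi}/\rho^3$, $\mathrm{e}^{-\phi}/\rho$ are then dictated by the tensorial transformation law of each essential torsion under the vertical $(\rho,\phi)$-action recorded in the $G$-matrix.

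The coefficients of the mixed terms $\theta^1\dz\theta^3$ in $\der\theta^3$ and $\theta^1\dz\theta^5$ in $\der\theta^5$ are \emph{not} independent invariants: the Bianchi identities $\der^2\theta^3 = 0 = \der^2\theta^5$ force them to be the unique combination $\tfrac{1}{8\rho^3}\big(2\mathrm{e}^\phi\bar{f}^2 I^3{}_{|5} + \rho(I^3{}_{|52} + 2I^3{}_{|4}) - 4\mathrm{e}^{-\phi}f^2 I^3\big)$ of coframe derivatives of the already-known $I^3$. Checking this reduces to a direct, if tedious, verification that the $(f^2,\bar{f}^2)$-derivatives of the coefficient reproduce the vertical weights of $\theta^1\dz\theta^3$ and $\theta^1\dz\theta^5$, while the horizontal part is fixed by the definition of $(\cdot)_{|i}$ as the dual frame derivative.

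The main obstacle is bookkeeping. Simultaneously tracking the absorption of five families of torsion, the elimination of $u_1$, the four vertical differentials $\der\rho,\der\phi,\der f^2,\der\bar{f}^2$, and the Bianchi compatibility for the two mixed coefficients requires very careful organisation. In practice one would mirror the corresponding CR computation of~\cite{Merker-Pocchiola-2018} and~\cite{Foo-Merker-2019}, where conjugation provides a parity symmetry between the two `halves' of the coframe; here I would have to verify by hand that the analogous symmetry between the $(\omega^1,\omega^2,\omega^3)$ and $(\omega^1,\omega^4,\omega^5)$ blocks survives the relaxation of the reality constraint, so that the structure equations~\eqref{d-theta-I-vert} are indeed the genuine para-CR analog of the Hermitian case and no extra essential torsion appears.
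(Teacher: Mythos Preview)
Your outline is correct and follows essentially the same Cartan-normalization route as the paper: impose the target structure equations as torsion constraints, solve successively for $\Omega_1,\Omega_2,\Omega_3,\Omega_4$ (absorbing $u_1$ along the way), and read off the unabsorbable torsion in $\der\theta^3$ and $\der\theta^5$ as the relative invariants $I^1,I^2,I^3$; the mixed $\theta^1\dz\theta^3$ and $\theta^1\dz\theta^5$ coefficients are then forced by $\der^2=0$, exactly as you say. Two small inaccuracies in your bookkeeping: $\Omega_2$ is pinned down from the pair $\der\theta^2,\der\theta^4$ (not from $\der\theta^1$ alone), and $\Omega_4$ is fixed by $\der\theta^4$ (not $\der\theta^2$); the paper's organizational device is to first close the abstract EDS on $M$ (its Proposition~\ref{susend}) to extract all $\der^2=0$ relations among $I^3$, $I^3{}_{|5}$, $I^3{}_{|52}$, $I^3{}_{|4}$ once and for all, and only then apply the lifting matrix~\eqref{matris}, which makes the Bianchi step you describe entirely mechanical rather than a separate verification.
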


\hspace{-0.5cm}
\scalebox{0.85}{\begin{picture}(0,0)%
\includegraphics{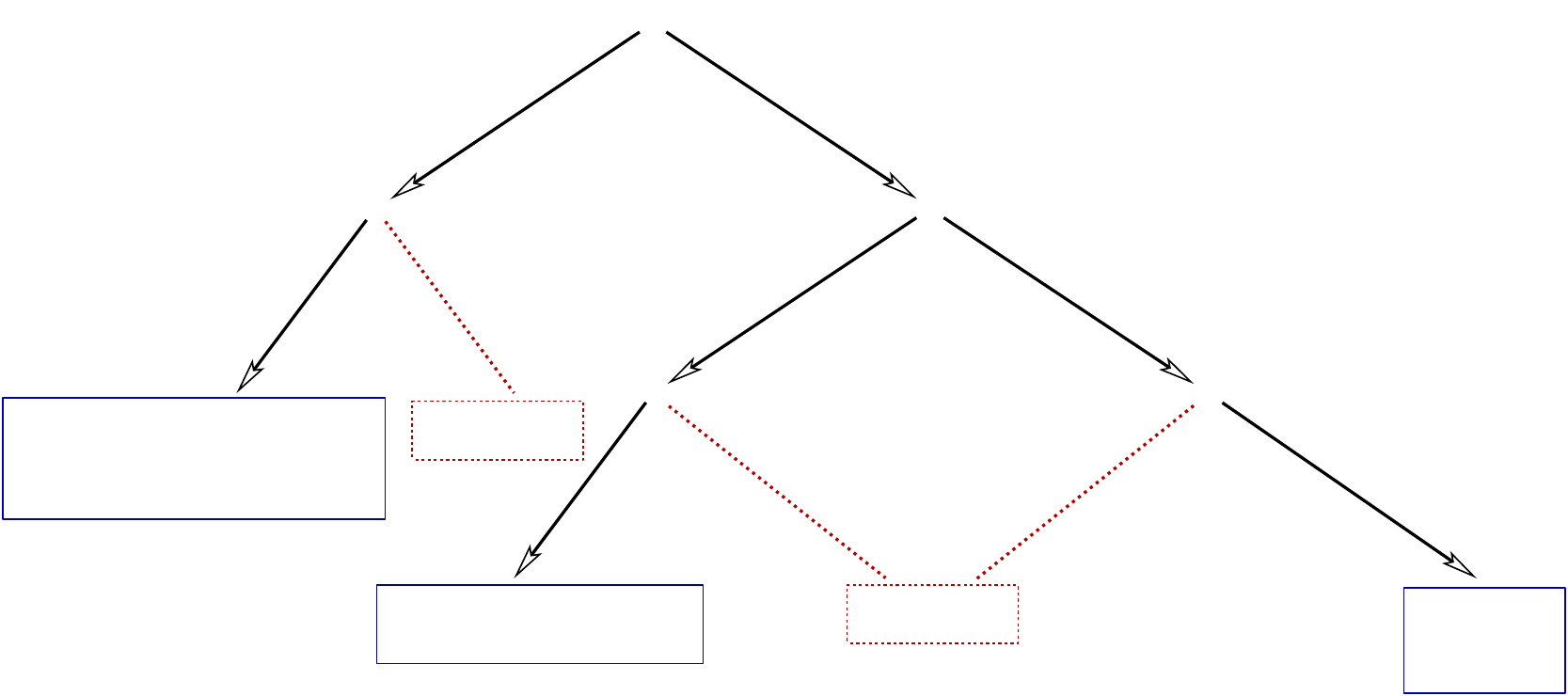}%
\end{picture}%
\setlength{\unitlength}{4144sp}%
\begingroup\makeatletter\ifx\SetFigFont\undefined%
\gdef\SetFigFont#1#2#3#4#5{%
  \reset@font\fontsize{#1}{#2pt}%
  \fontfamily{#3}\fontseries{#4}\fontshape{#5}%
  \selectfont}%
\fi\endgroup%
\begin{picture}(7618,3371)(878,-3329)
\put(3226,-434){\makebox(0,0)[lb]{\smash{{\SetFigFont{9}{10.8}{\familydefault}{\mddefault}{\updefault}{\color[rgb]{0,0,0}$0\neq$}%
}}}}
\put(4564,-1329){\makebox(0,0)[lb]{\smash{{\SetFigFont{9}{10.8}{\familydefault}{\mddefault}{\updefault}{\color[rgb]{0,0,0}$0\neq$}%
}}}}
\put(2137,-1374){\makebox(0,0)[lb]{\smash{{\SetFigFont{9}{10.8}{\familydefault}{\mddefault}{\updefault}{\color[rgb]{0,0,0}$0\neq$}%
}}}}
\put(4026,-81){\makebox(0,0)[lb]{\smash{{\SetFigFont{9}{10.8}{\familydefault}{\mddefault}{\updefault}{\color[rgb]{0,0,0}$I^3$}%
}}}}
\put(5369,-986){\makebox(0,0)[lb]{\smash{{\SetFigFont{9}{10.8}{\familydefault}{\mddefault}{\updefault}{\color[rgb]{0,0,0}$I^2$}%
}}}}
\put(6711,-1876){\makebox(0,0)[lb]{\smash{{\SetFigFont{9}{10.8}{\familydefault}{\mddefault}{\updefault}{\color[rgb]{0,0,0}$I^1$}%
}}}}
\put(3989,-1869){\makebox(0,0)[lb]{\smash{{\SetFigFont{9}{10.8}{\familydefault}{\mddefault}{\updefault}{\color[rgb]{0,0,0}$I^2_{\vert5}$}%
}}}}
\put(2612,-971){\makebox(0,0)[lb]{\smash{{\SetFigFont{9}{10.8}{\familydefault}{\mddefault}{\updefault}{\color[rgb]{0,0,0}$I^3_{\vert5}$}%
}}}}
\put(7750,-2940){\makebox(0,0)[lb]{\smash{{\SetFigFont{9}{10.8}{\familydefault}{\mddefault}{\updefault}{\color[rgb]{0,0,0}\blue{Flat model}}%
}}}}
\put(7749,-3264){\makebox(0,0)[lb]{\smash{{\SetFigFont{9}{10.8}{\familydefault}{\mddefault}{\updefault}{\color[rgb]{0,0,0}\blue{$z_{xxx}\!=\!0$}}%
}}}}
\put(7750,-3098){\makebox(0,0)[lb]{\smash{{\SetFigFont{9}{10.8}{\familydefault}{\mddefault}{\updefault}{\color[rgb]{0,0,0}\blue{$z_y\!=\!\frac{1}{4}z_{xx}^2$}}%
}}}}
\put(938,-2397){\makebox(0,0)[lb]{\smash{{\SetFigFont{9}{10.8}{\familydefault}{\mddefault}{\updefault}{\color[rgb]{0,0,0}\blue{$z_{xxx}\!=\!z_{xx}^3$}}%
}}}}
\put(2744,-2915){\makebox(0,0)[lb]{\smash{{\SetFigFont{9}{10.8}{\familydefault}{\mddefault}{\updefault}{\color[rgb]{0,0,0}\blue{$1$-parameter family}}%
}}}}
\put(935,-2012){\makebox(0,0)[lb]{\smash{{\SetFigFont{9}{10.8}{\familydefault}{\mddefault}{\updefault}{\color[rgb]{0,0,0}\blue{Single homogeneous model}}%
}}}}
\put(3399,-2379){\makebox(0,0)[lb]{\smash{{\SetFigFont{9}{10.8}{\familydefault}{\mddefault}{\updefault}{\color[rgb]{0,0,0}$0\neq$}%
}}}}
\put(2911,-2015){\makebox(0,0)[lb]{\smash{{\SetFigFont{8}{9.6}{\familydefault}{\mddefault}{\updefault}{\color[rgb]{0,0,0}\red{Differential}}%
}}}}
\put(935,-2199){\makebox(0,0)[lb]{\smash{{\SetFigFont{9}{10.8}{\familydefault}{\mddefault}{\updefault}{\color[rgb]{0,0,0}\blue{$z_y\!=\frac{1}{4}z_x^2$}}%
}}}}
\put(2901,-2158){\makebox(0,0)[lb]{\smash{{\SetFigFont{8}{9.6}{\familydefault}{\mddefault}{\updefault}{\color[rgb]{0,0,0}\red{ contradiction}}%
}}}}
\put(5025,-2908){\makebox(0,0)[lb]{\smash{{\SetFigFont{8}{9.6}{\familydefault}{\mddefault}{\updefault}{\color[rgb]{0,0,0}\red{Differential}}%
}}}}
\put(2742,-3107){\makebox(0,0)[lb]{\smash{{\SetFigFont{9}{10.8}{\familydefault}{\mddefault}{\updefault}{\color[rgb]{0,0,0}\blue{of homogeneous models}}%
}}}}
\put(5019,-3047){\makebox(0,0)[lb]{\smash{{\SetFigFont{8}{9.6}{\familydefault}{\mddefault}{\updefault}{\color[rgb]{0,0,0}\red{ contradiction}}%
}}}}
\put(3012,-1300){\makebox(0,0)[lb]{\smash{{\SetFigFont{9}{10.8}{\familydefault}{\mddefault}{\updefault}{\color[rgb]{0,0,0}\red{$\equiv 0$}}%
}}}}
\put(4525,-2184){\makebox(0,0)[lb]{\smash{{\SetFigFont{9}{10.8}{\familydefault}{\mddefault}{\updefault}{\color[rgb]{0,0,0}\red{$\equiv 0$}}%
}}}}
\put(5986,-2258){\makebox(0,0)[lb]{\smash{{\SetFigFont{9}{10.8}{\familydefault}{\mddefault}{\updefault}{\color[rgb]{0,0,0}\red{$0\neq$}}%
}}}}
\put(4617,-385){\makebox(0,0)[lb]{\smash{{\SetFigFont{9}{10.8}{\familydefault}{\mddefault}{\updefault}{\color[rgb]{0,0,0}$\equiv 0$}%
}}}}
\put(6041,-1332){\makebox(0,0)[lb]{\smash{{\SetFigFont{9}{10.8}{\familydefault}{\mddefault}{\updefault}{\color[rgb]{0,0,0}$\equiv 0$}%
}}}}
\put(7316,-2183){\makebox(0,0)[lb]{\smash{{\SetFigFont{9}{10.8}{\familydefault}{\mddefault}{\updefault}{\color[rgb]{0,0,0}$\equiv 0$}%
}}}}
\end{picture}%
}

Developing the technique of Cartan in 
{\em e.g.}~{\cite[Chap.~III]{Cartan-1932-I}},
we split the study in two branches: $I^3 \neq 0$ and
$I^3 \equiv 0$. When $I^3 \neq 0$, we show that
one can normalize $\rho$, $u_1$, $\bar{f}^2$.
Then in the obtained structure equations,
$I^3 \vert_5$ becomes a relative invariant.
We show that $I^3 \vert_5 \equiv 0$ conducts to
a differential contradiction.
When $I^3 \vert_5 \neq 0$, we can also normalize
$\phi$, $f^2$, hence obtaining an $\{e\}$-structure
on the base $M$, {\em cf.}~{\cite{Merker-Pocchiola-2018}}.
At first, certain $15$ scalar constant curvatures appear,
and by looking at differential consequences of
$d \circ d = 0$, they reduce to {\em only one pair of solutions},
with $\epsilon = \pm 1$, 
and we come to Maurer-Cartan type equations:
\[
\begin{aligned}
\der\theta^1=&\,\epsilon\Big(-6\theta^1\dz\theta^3+\tfrac12\theta^1\dz\theta^4-\tfrac32\theta^1\dz\theta^5\Big)+\theta^2\dz\theta^4,\\
\der\theta^2=&\,\epsilon\Big(-\tfrac{1}{16}\theta^1\dz\theta^2-2\theta^2\dz\theta^3+\tfrac12\theta^2\dz\theta^4-\theta^2\dz\theta^5\Big)-\theta^1\dz\theta^3+\\&\tfrac{1}{32}\theta^1\dz\theta^4-\tfrac18\theta^1\dz\theta^5+\theta^3\dz\theta^4,\\
\der\theta^3=&\,\epsilon\Big(-\tfrac{3}{16}\theta^1\dz\theta^3+\tfrac12\theta^3\dz\theta^4-\tfrac12\theta^3\dz\theta^5\Big)+\tfrac{1}{32}\theta^2\dz\theta^4-\tfrac18\theta^2\dz\theta^5,\\
\der\theta^4=&\,\epsilon\Big(-\tfrac18\theta^1\dz\theta^4+\tfrac14\theta^1\dz\theta^5+4\theta^3\dz\theta^4-\tfrac12\theta^4\dz\theta^5\Big)-\theta^2\dz\theta^5,\\
\der\theta^5=&\,\epsilon\Big(-\tfrac{1}{16}\theta^1\dz\theta^5+2\theta^3\dz\theta^5-\tfrac14\theta^4\dz\theta^5\Big).
\end{aligned}
\]

Next, in the branch $I^3 \equiv 0$, the 
equations~({\ref{d-theta-I-vert}}) become:
\[
\begin{aligned}
\der \theta^1=&-\theta^1\dz\Om_1+\theta^2\dz\theta^4,\\
\der \theta^2=&\theta^2\dz(\Om_2-\tfrac12\Om_1)-\theta^1\dz\Om_3+\theta^3\dz\theta^4,\\
\der \theta^3=&2\theta^3\dz\Om_2-\theta^2\dz\Om_3+\tfrac{{\rm e}^{3\phi}}{\rho^3}I^1\,\theta^1\dz\theta^4,\\
\der \theta^4=&-\theta^2\dz\theta^5-\theta^4\dz(\tfrac12\Om_1+\Om_2)-\theta^1\dz\Om_4,\\
\der \theta^5=&-2\theta^5\dz\Om_2+\theta^4\dz\Om_4+\tfrac{{\rm e}^{-3\phi}}{\rho^3}I^2\,\theta^1\dz\theta^2.
\end{aligned}
\]
Here, $I^1$ and $I^2$ are relative invariants.

In the sub-branch $I^2 \neq 0$, we first normalize
$\rho$, $u_1$, $\bar{f}^2$. 
Then $I^2 \vert_5$ becomes a relative invariant.
We show that $I^2 \vert_5 \equiv 0$ leads to a differential
contradiction. When $I^2 \vert_5 \neq 0$,
we can also normalize $\phi$, $f^2$,
hence obtaining an $\{e\}$-structure
on the base $M$, {\em cf.}~{\cite{Merker-Pocchiola-2018}}.
At first, certain $12$ scalar constant curvatures appear,
and by looking at differential consequences of
$d \circ d = 0$, they reduce to {\em one pair of
$1$-parameter solutions} 
and we come to Maurer-Cartan type equations,
parametrized by any $s \in \R$, again with 
$\epsilon = \pm 1$:
\[
\begin{aligned}
\der\theta^1=&-\epsilon\Big(\theta^1\dz\theta^3+\theta^1\dz\theta^5\Big)+\theta^2\dz\theta^4,\\
\der\theta^2=&\,\epsilon\Big(s\theta^1\dz\theta^2-\theta^2\dz\theta^5\Big)-s\theta^1\dz\theta^4+\theta^3\dz\theta^4,\\
\der\theta^3=&\,\epsilon\Big(\theta^1\dz\theta^4-\theta^3\dz\theta^5\Big)-\theta^1\dz\theta^2-s\theta^2\dz\theta^4,\\
\der\theta^4=&\,\epsilon\Big(-s\theta^1\dz\theta^4+\theta^3\dz\theta^4\Big)+s\theta^1\dz\theta^2-\theta^2\dz\theta^5,\\
\der\theta^5=&\,\epsilon\Big(-\theta^1\dz\theta^4+\theta^3\dz\theta^5\Big)+\theta^1\dz\theta^2+s\theta^2\dz\theta^4.
\end{aligned}
\]

Lastly, when $I^2 \equiv 0$, we show that $I^1 \equiv 0$ too
necessarily, and we show that the structure
equations are those of the model $z_y = \frac{1}{4}\,
(z_{xx})^2$ $\,\,\&\,\,$ $z_{xxx} = 0$.
The diagram above summarizes these explanations.

By general features of Cartan's method, all obtained
para-CR structures are pairwise not equivalent. 

To conclude, by setting up the PDEs
associated to para-CR
submanifolds of solutions inspired from Fels-Kaup's 
list~{\cite{Fels-Kaup-2008}},
we realize all these homogeneous models as stated in our main

\begin{theorem}
\label{main-theorem}
Homogeneous models for $2$-nondegenerate PDE five variables 
para-CR structures are classified by
the following list of mutually inequivalent models:

\smallskip\noindent
{\bf (i)}\,\, 
$z_y = \tfrac14 (z_x)^2\quad \&\quad z_{xxx}=0$;

\smallskip\noindent
{\bf (ii)}\,\, 
$z_y = \tfrac14 (z_x)^2\quad \& \quad z_{xxx}=(z_{xx})^3$;

\smallskip\noindent
{\bf (iiia)}\,\, 
$z_y = \tfrac14 (z_x)^b\,\, \& \,\,z_{xxx} = 
(2-b)\frac{(z_{xx})^2}{z_x}$ with $z_x > 0$ 
for any real $b \in [1, 2)$;

\smallskip\noindent
{\bf (iiib)}\,\, 
$z_y = f(z_x)\quad \& \quad z_{xxx} = h(z_x)\big(z_{xx}\big)^2$, 
where the function $f$ is determined by the implicit equation:
\[
\big(z_x^2+f(z_x)^2\big)
\mathrm{exp}
\Big(
2b\,\mathrm{arctan}\tfrac{bz_x-f(z_x)}{z_x+bf(z_x)}
\Big)
\,=\,
1+b^2
\] 
and where:
\[
h(z_x)
\,:=\,
\frac{(b^2-3)z_x-4bf(z_x)}{\big(f(z_x)-bz_x\big)^2},
\]
for any real $b > 0$.
\end{theorem}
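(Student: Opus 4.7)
The plan is to combine the Cartan reduction of the preceding sections, which produced constant-coefficient Maurer-Cartan equations in each branch of the branching diagram, with an explicit realization procedure: for each branch I exhibit a representative completely integrable pair $(F,H)$ whose associated $\{e\}$-structure matches the corresponding constant Maurer-Cartan system. Because, by the general features of Cartan's method, different branches yield inequivalent structures, it suffices to produce one PDE representative per branch and to verify its homogeneity; and since the branching diagram exhausts the possibilities (with $I^3\equiv 0 \equiv I^2$ forcing $I^1\equiv 0$), the four families listed will classify all homogeneous models.

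I would first dispose of the two rigid models by direct computation of $I^1, I^2, I^3$ using the formulas from the previous theorem. For (i) with $F=\tfrac14 p^2$ and $H=0$, all three invariants vanish, placing this system in the deepest branch; the realization as the tube over the light cone and the identification of the symmetry algebra with $\mathfrak{so}(3,2)$ were already recalled in the introduction. For (ii) with $F=\tfrac14 p^2$ and $H=r^3$, one computes $F_{ppp}=0$ but $H_{rr}=6r$, so $I^2=0$ whereas $I^3=2r\neq 0$, placing (ii) in the branch $I^3\neq 0$. I would then verify, by carrying out the normalizations of $\rho, u_1, \bar f^2, \phi, f^2$ dictated by the non-vanishing of $I^3$ and $I^3_{|5}$, that the resulting $\{e\}$-structure satisfies the constant Maurer-Cartan system with $\epsilon=\pm 1$ displayed in the preceding theorem.

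For the sub-branch $I^3\equiv 0, I^2\neq 0$ containing (iiia) and (iiib), I would first solve $I^3\equiv 0$, which reads $2F_{ppp}+F_{pp}H_{rr}\equiv 0$, together with the homogeneity constraints coming from the constant Maurer-Cartan equations. For the Ansatz $F=\tfrac14 p^b$ one has $F_{ppp}/F_{pp}=(b-2)/p$, so $H_{rr}=2(2-b)/p$ and hence $H=(2-b)r^2/p$; the compatibility $D^3F=\Delta H$ holds identically for this choice, yielding family (iiia). The boundary $b=2$ collapses to (i), while degeneracies at $b\in\{0,1\}$ together with an orientation-reflection symmetry account for the restricted range $b\in[1,2)$; the Cartan parameter $s$ labeling this branch is a smooth function of $b$ on this range, and each admissible $b$ produces a distinct inequivalent homogeneous structure.

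The main obstacle is the twisted family (iiib), whose defining relation for $f$ is transcendental. I expect this family to cover precisely the values of the Cartan parameter $s$ not realized by (iiia), in the spirit of a hyperbolic-versus-elliptic dichotomy between two real forms of a common complex ODE. My plan is to write the constraint $I^3\equiv 0$ together with the reduced-curvature constancy as an ODE for the graph $p\mapsto(p,f(p))$ in the $(p,f)$-plane and to perform the change of unknown $u=(1-ib)(p+if)$; the constancy condition then becomes $|u|^2 e^{-2b\,\mathrm{arg}\,u}=(1+b^2)^2$, which in polar coordinates $u=\varrho e^{i\vartheta}$ reads $\varrho=(1+b^2)e^{b\vartheta}$, a logarithmic spiral of pitch $b$. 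Translating this back into the original variables yields exactly the stated implicit relation for $f$, and differentiating it implicitly provides the formula for $h(p)$ via the compatibility equation $D^3F=\Delta H$. Mutual inequivalence of all four families, and within each parameter range, follows from the general principles of Cartan's method: distinct branches of the diagram, as well as distinct values of the essential invariant $b$, produce non-isomorphic $\{e\}$-structures on the base manifold $M$.
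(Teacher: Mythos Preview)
Your overall strategy matches the paper's: the Cartan reduction of the preceding sections reduces the classification to the two Maurer-Cartan families (the single $\epsilon$-pair for $I^3\neq 0$ and the $(\epsilon,s)$-family for $I^3\equiv 0$, $I^2\neq 0$), after which the task is to realize each by an explicit completely integrable pair $(F,H)$. Your handling of (i) and (ii) is essentially what the paper does.

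For (iiib) your route diverges from the paper's. You propose to recognize the implicit relation for $f$ as a logarithmic spiral via the complex substitution $u=(1-ib)(p+if)$; this is a correct and elegant observation (the argument identity $\arctan b-\arctan(f/p)=\arctan\frac{bp-f}{p+bf}$ makes it work). The paper instead proceeds in the opposite direction: it writes down the explicit general solution
\[
z=\exp\!\Big(b\,\arctan\tfrac{y+\bar y}{x+\bar x}\Big)\sqrt{(x+\bar x)^2+(y+\bar y)^2}-\bar z,
\]
introduces the coordinate $u=(y+\bar y)/(x+\bar x)$, writes the coframe $(\om^1,\dots,\om^5)$ in the variables $(y,\bar x,\bar y,\bar z,u)$, and exhibits an explicit $G_0$-valued matrix $S$ carrying it to the Maurer-Cartan system \eqref{homo2expl}. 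Your geometric picture explains \emph{why} the implicit equation has its form; the paper's approach has the advantage of producing the transformation $S$ directly and hence the exact value of $s$ in terms of $b$.

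There is, however, a genuine gap in your argument. The decisive quantitative step in the paper is the explicit computation of the map $b\mapsto s(b)$ for each family:
\[
s=-\tfrac32\,\frac{1-b+b^2}{[(b-2)(b+1)(2b-1)]^{2/3}}\quad\text{for (iiia)},\qquad
s=-\tfrac32\,\frac{b^2-3}{[2b(9+b^2)]^{2/3}}\quad\text{for (iiib)},
\]
together with the monotonicity analysis showing that (iiia) with $b\in[1,2)$ covers exactly $s\le -3\cdot 2^{-5/3}$ and (iiib) with $b>0$ covers exactly $s>-3\cdot 2^{-5/3}$. Your account replaces this with an expectation (``I expect this family to cover precisely the values of $s$ not realized by (iiia)'') and an incorrect heuristic for the range $[1,2)$: the restriction does not come from ``degeneracies at $b\in\{0,1\}$'' and an orientation-reflection, but from the fact that $s(b)$ attains the boundary value $-3\cdot 2^{-5/3}$ at $b=1$ and decreases monotonically to $-\infty$ as $b\to 2^-$. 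Without actually computing $s(b)$ you cannot conclude that (iiia) and (iiib) together exhaust the $(\epsilon,s)$-family, nor that distinct $b$ yield inequivalent structures; both conclusions hinge on the explicit formulas.
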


Our explorations can certainly
be generalized to higher dimensions, 
{\em cf.}~{\cite{Porter-2015, Porter-Zelenko-2017}} in a CR context.

The body of the paper is devoted to provide a streamlined
exposition of our Cartan-type techniques. In 
Section~{\ref{Lie-symmetry-algebras}}, the reader will find
the Lie algebras of point symmetries of these models
{\bf (i)}, {\bf (ii)}, {\bf (iiia)}, {\bf (iiib)}.

To end up this introduction, recall that the complete classification
of $A_3(\R)$-homogeneous surfaces $S^2 \subset \R^3$ was terminated by
Doubrov-Komrakov-Rabinovich
in~{\cite{Doubrov-Komrakov-Rabinovich-1996}},
and re-done by Eastwood-Ezhov
in~{\cite{Eastwood-Ezhov-1999}}, who used the power series method.
The full classification
includes that of $A_3(\R)$-homogeneous {\em parabolic}
surfaces $\{ u = F (x,y)\}$ in $\R^3$ having Hessian matrix
$\big( \begin{smallmatrix} F_{xx} & F_{xy} \\ F_{yx} &
F_{yy} \end{smallmatrix} \big)$ everywhere of rank $1$.
The classification lists contained in~{\cite{Eastwood-Ezhov-1999,
Doubrov-Komrakov-Rabinovich-1996}} are
in accordance with our main Theorem~{\ref{main-theorem}}. 
The last Section~{\ref{link-homogeneous-parabolic-surfaces}}
compares these classifications and shows that, 
surprisingly, Cartan's reduction 
gathers a scattered number of
models into just one family.

\medskip\noindent{\bf Acknowledgments.}
This work would not have been realized without the generous support
of the Polish National Science Center. Hoping to benefit from
renewed excellent working conditions, 
the authors would also like to thank the 
{\sl Center for Theoretical Physics of the Polish Academy of Sciences}
in Warsaw and the
{\sl Institut de Math\'ematique d'Orsay} in Paris.

\section{Preliminaries}\label{prelim}

\subsection{Five dimensional para-CR manifolds 
with Levi form degenerate in one direction}\label{system}
A 5-dimensional CR manifold whose Levi form is degenerate in precisely
one direction, and which is \emph{not} locally CR-isomorphic to a
product of a 3-dimensional CR manifold times $\bbC$, is called
\emph{$2$-nondegenerate} at a generic point. It is well known that the
flat model for 5-dimensional 2-nondegenerate CR manifolds, is a `tube
over the future light cone' \cite{Freeman-1977}, and as such can be
embedded in $\bbC^3$ with coordinates $(x,y,z)$ as:
\be
(x+\bar{x})^2+(y-\bar{y})(z-\bar{z})=0.\label{tube}
\ee

This CR manifold is \emph{flat} in the sense that it has maximal group
of local symmetries among all 5-dimensional 2-nondegenerate CR
manifolds. This symmetry group is isomorphic to 
$\sog(3,2)$. In other words all
5-dimensional 2-nondegenerate CR manifolds are described in terms of a
Cartan reduction to an $\{e\}$-structure, with flat model having
Maurer-Cartan equations of the Lie group $\sog(3,2)$, and a CR
manifold is locally equivalent to the \emph{tube over the future light
cone} if and only if the curvature of this connection identically
vanish ({\cite{
Medori-Spiro-2014, Merker-Pocchiola-2018}}).

In this paper we will study a \emph{para}-CR version of 5-dimensional
2-nondegenerate CR manifolds. As explained in details in 
\cite{Hill-Nurowski-2010,
  Merker-2008} a geometry of para-CR manifolds is closely related to
the geometry of certain systems of PDEs. To see this consider the tube
over the future light cone (\ref{tube}) and think about variables
$(x,y,z,\bar{x},\bar{y},\bar{z})$ as beeing \emph{real},
i.e. $(x,y,z,\bar{x},\bar{y},\bar{z})\in\bbR^6=\bbR^3\oplus\overline{\bbR}{}^3$,
where we have put a bar over the second $\bbR^3$ in the summand, to
emphasize the difference between the \emph{real} variables $(x,y,z)$
and the \emph{real} variables $(\bar{x},\bar{y},\bar{z})$.

Treating $(x,y,z,\bar{x},\bar{y},\bar{z})$ in (\ref{tube}) as real, we solve this equation for $z$ obtaining
\be 
z
=
-\frac{(x+\bar{x})^2}{y-\bar{y}}+\bar{z}.\label{flatsol}
\ee
And now we interpret this expression as a defining formula for a
3-parameter family of \emph{functions}
$z=z(x,y;\bar{x},\bar{y},\bar{z})$ on the plane $(x,y)$, with
$(\bar{x},\bar{y},\bar{z})$ enumerating the members of the family. We
calculate the derivatives $z_x$, $z_y$ and $z_{xxx}$ and observe that
regardless of $(\bar{x},\bar{y},\bar{z})$ we have
\be 
z_y
=
\tfrac14 (z_x)^2\quad\quad \& \quad\quad z_{xxx}=0.
\label{flateq}
\ee
Conversely, a system of PDEs on the plane (\ref{flateq}) for the
unknown $z=z(x,y)$ has (\ref{flatsol}) as its most general solution.

Para-CR structures associated with the system of PDEs defined in the
title of this article and, in particular, the para-CR structure
associated with the system \eqref{flateq}, according to Definition~2.3
from \cite{Hill-Nurowski-2010}, is of type $(1,2,2)$ i.e. is defined
in terms of an equivalence class $[(\om^1,\om^2,\om^3,\om^4,\om^5)]$
of 1-forms on a 5-dimensional manifold $M$ such that:
\begin{itemize}
\item $\om^1\dz\om^2\dz\om^3\dz\om^4\dz\om^5\neq 0$ 
at each point of $M$,
\item two choices of 1-forms
  $(\om^1,\om^2,\om^3,\om^4,\om^5)$
  and $(\bar{\om}{}^1,\bar{\om}{}^2,\bar{\om}{}^3,\bar{\om}{}^4,\bar{\om}{}^5)$ are equivalent iff there exist real functions $f$, $f^i$, 
${f^i}_j$, with $i,j=2,3,4,5$, such that:
\be
\bma \bar{\om}{}^1\\\bar{\om}{}^1\\\bar{\om}{}^1\\\bar{\om}{}^1\\\bar{\om}{}^1\ema=\bma f&0&0&0&0\\ f^2&f^2{}_2&f^2{}_3&0&0\\ f^3&f^3{}_2&f^3{}_3&0&0\\ f^4&0&0&f^4{}_4&f^4{}_5\\f^5&0&0&f^5{}_4&f^5{}_5\ema\bma \om^1\\\om^2\\\om^3\\\om^4\\\om^5\ema\label{rot}
\ee
with of course 
$f(f^2{}_2f^3{}_3-f^2{}_3f^3{}_2)
(f^4{}_4f^5{}_5-f^4{}_5f^5{}_4)\neq 0$,

\item in addition, any (hence all) representative(s) $(\bar{\om}{}^1,\bar{\om}{}^2,\bar{\om}{}^3,\bar{\om}{}^4,\bar{\om}{}^5)$ of the equivalence class $[(\om^1,\om^2,\om^3,\om^4,\om^5)]$ must satisfy 
\emph{integrability conditions}
  \be\begin{aligned}
&\der\bar{\om}{}^1\dz\bar{\om}{}^1\dz\bar{\om}{}^2\dz\bar{\om}{}^3=0,\\
  &\der\bar{\om}{}^2\dz\bar{\om}{}^1\dz\bar{\om}{}^2\dz\bar{\om}{}^3=0,\\
  &\der\bar{\om}{}^3\dz\bar{\om}{}^1\dz\bar{\om}{}^2\dz\bar{\om}{}^3=0,\\
  &\der\bar{\om}{}^1\dz\bar{\om}{}^1\dz\bar{\om}{}^4\dz\bar{\om}{}^5=0,\\
  &\der\bar{\om}{}^4\dz\bar{\om}{}^1\dz\bar{\om}{}^4\dz\bar{\om}{}^5=0,\\
  &\der\bar{\om}{}^5\dz\bar{\om}{}^1\dz\bar{\om}{}^4\dz\bar{\om}{}^5=0,\label{integ}
  \end{aligned}
  \ee
  hence defining two integrable rank 2 distributions ${\mathcal D}_1=(\omega^1,\omega^2,\omega^3)^\perp$ and ${\mathcal D}_2=(\omega^1,\omega^4,\omega^5)^\perp$ on $M$.
\end{itemize}

A para-CR structure has the \emph{Levi form degenerate in precisely one direction} if and only if in the class of forms \eqref{rot} there exists a representative $(\bar{\om}{}^1,\bar{\om}{}^2,\bar{\om}{}^3,\bar{\om}{}^4,\bar{\om}{}^5)$ such that
$$\der\bar{\om}{}^1\dz\bar{\om}{}^1=\bar{\om}{}^2\dz\bar{\om}{}^4\dz\bar{\om}{}^1.$$
In the case when a para-CR structure has a Levi form degenerate in precisely one direction, it is \emph{2-nondegenerate} if and only if in the class of forms \eqref{rot} there exists a representative $(\bar{\om}{}^1,\bar{\om}{}^2,\bar{\om}{}^3,\bar{\om}{}^4,\bar{\om}{}^5)$ such that
$$\der\bar{\om}{}^1\dz\bar{\om}{}^1=\bar{\om}{}^2\dz\bar{\om}{}^4\dz\bar{\om}{}^1\quad\&\quad \der\bar{\om}{}^4\dz\bar{\om}{}^1\dz\bar{\om}{}^4\neq 0.$$

Given a PDE system (\ref{flateq}) one can consider a 5-dimensional manifold $M$ of second jets for the function $z$, parameterized by $(x,y,z,z_x,z_{xx})$, and define 1-forms
\be\begin{aligned}
  \bar{\om}{}^1&=\der z- z_x\der x-\tfrac14 (z_x)^2\der y\\
  \bar{\om}{}^2&=\der z_x-z_{xx}\der x-\tfrac12z_xz_{xx}\der y\\
  \bar{\om}{}^3&=\der z_{xx}-\tfrac12 (z_{xx})^2\der y\\
  \bar{\om}{}^4&=\der x\\
  \bar{\om}{}^5&=\der y.
\end{aligned}\label{flatforms}\ee
One can easilly verify that they satisfy the integrability conditions \eqref{integ}. Thus, they define a $(1,2,2)$ type para-CR structure on $M$ by considering all five-tuples of 1-forms $(\om^1,\om^2,\om^3,\om^4,\om^5)$ given by
\be\bma
\om^1\\\om^2\\\om^3\\\om^4\\\om^5\ema =\bma f&0&0&0&0\\ f^2&f^2{}_2&f^2{}_3&0&0\\ f^3&f^3{}_2&f^3{}_3&0&0\\ f^4&0&0&f^4{}_4&f^4{}_5\\f^5&0&0&f^5{}_4&f^5{}_5\ema\bma\bar{\om}{}^1\\\bar{\om}{}^2\\\bar{\om}{}^3\\\bar{\om}{}^4\\\bar{\om}{}^5\ema,\label{eqa}\ee
with arbitrary functions $f,f^i {f^i}_j$ on $M$ such that
\be
f(f^2{}_2f^3{}_3-f^2{}_3f^3{}_2)(f^4{}_4f^5{}_5-f^4{}_5f^5{}_4)\neq 0.\label{eqb}\ee As explained in \cite{Hill-Nurowski-2010} this para-CR manifold describes the same differential geometry as the system of PDEs \eqref{flateq} considered modulo \emph{point} transformations of variables. 
\subsection{The flat model and its EDS}\label{flatm}
Let $(\bar{\om}{}^1,\bar{\om}{}^2,\bar{\om}{}^3,\bar{\om}{}^4,\bar{\om}{}^5)$ be the forms \eqref{flatforms} defining the para-CR structure corresponding to the PDE system \eqref{flateq}. We use an equivalent representative of these forms given by 
$$\bma
\om^1\\\om^2\\\om^3\\\om^4\\\om^5\ema =\bma -1&0&0&0&0\\ 0&1&0&0&0\\ 0&0&-1&0&0\\ 0&0&0&1&\tfrac12 z_x\\ 0&0&0&0&-\tfrac12\ema\bma\bar{\om}{}^1\\\bar{\om}{}^2\\\bar{\om}{}^3\\\bar{\om}{}^4\\\bar{\om}{}^5\ema,$$ i.e.
\be
\begin{aligned}
  \om^1&=-\der z+ z_x\der x+\tfrac14 (z_x)^2\der y\\
  \om^2&=\der z_x-z_{xx}\der x-\tfrac12z_xz_{xx}\der y\\
  \om^3&=-\der z_{xx}+\tfrac12 (z_{xx})^2\der y\\
  \om^4&=\der x+\tfrac12z_x\der y\\
  \om^5&=-\tfrac12\der y.
  \end{aligned}\label{flfo}
\ee
They satisfy the system:
$$\begin{aligned}
  \der\om^1&=\om^2\dz\om^4\\
  \der\om^2&=z_{xx}\om^2\dz\om^5+\om^3\dz\om^4\\
  \der\om^3&=2z_{xx}\om^3\dz\om^5\\
  \der\om^4&=-\om^2\dz\om^5-z_{xx}\om^4\dz\om^5\\
  \der\om^5&=0.
  \end{aligned}
$$
These equations show, in particular, that the para-CR structure defined by the PDE system \eqref{flateq} has the \emph{Levi form degenerate in precisely one direction} and that it is \emph{2-nondegenerate}.

For reasons which will be clear in the proof of Theorem \ref{th1.1} it is convenient to define the following auxiliary 1-forms:
\be
\varpi_1=0,\,\,\varpi_2=z_{xx}\om^5,\,\,\varpi_3=0,\,\,\varpi_4=0,\,\,\varpi_5=0.\label{ofo}\ee
Although majority of these forms are vanishing, they will not vanish in the case of a general system of PDEs defined in the title of this article.


With these auxiliary forms the system of ten 1-forms
$(\om^1,\om^2,\om^3,\om^4,\om^5,\varpi_1,\varpi_2,$ $\varpi_3,\varpi_4,$ $\varpi_5)$ on $M$ satisfies an EDS:
\be\begin{aligned}
\der \om^1&=\om^2\dz\om^4-\om^1\dz\varpi_1\\
\der \om^2&=\om^3\dz\om^4+\om^2\dz(\varpi_2-\tfrac12\varpi_1)-\om^1\dz\varpi_3\\
\der \om^3&=2\om^3\dz\varpi_2-\om^2\dz\varpi_3\\
\der \om^4&=-\om^2\dz\om^5-\om^4\dz(\tfrac12\varpi_1+\varpi_2)-\om^1\dz\varpi_4\\
\der \om^5&=-2\om^5\dz\varpi_2+\om^4\dz\varpi_4\\
\der \varpi_1&=-\om^4\dz\varpi_3+\om^2\dz\varpi_4-\om^1\dz\varpi_5\\
\der \varpi_2&=-\om^3\dz\om^5-\tfrac12\om^4\dz\varpi_3-\tfrac12\om^2\dz\varpi_4\\
\der \varpi_3&=-(\tfrac12\varpi_1+\varpi_2)\dz\varpi_3+\om^3\dz\varpi_4-\tfrac12\om^2\dz\varpi_5\\
\der \varpi_4&=(\varpi_2-\tfrac12\varpi_1)\dz\varpi_4+\om^5\dz\varpi_3-\tfrac12\om^4\dz\varpi_5\\
\der \varpi_5&=-\varpi_1\dz\varpi_5+2\varpi_3\dz\varpi_4.
\end{aligned}\label{sys0}\ee
Now we consider the most general forms defining the para-CR structure corresponding to the PDE system \eqref{flateq}. These are
\be
\bma
\theta^1\\\theta^2\\\theta^3\\\theta^4\\\theta^5\ema =\bma f&0&0&0&0\\ f^2&f^2{}_2&f^2{}_3&0&0\\ f^3&f^3{}_2&f^3{}_3&0&0\\ f^4&0&0&f^4{}_4&f^4{}_5\\f^5&0&0&f^5{}_4&f^5{}_5\ema\bma\om^1\\\om^2\\\om^3\\\om^4\\\om^5\ema,\label{mog}
\ee
with $(\om^1,\om^2,\om^3,\om^4,\om^5)$ as in \eqref{flfo}. These forms live on a $(5+13)$-dimensional bundle $M\times G_0\to M$, with a group $G_0=\big(\glg(2,\bbR)\times\glg(2,\bbR)\big)\rtimes\bbR^5$ consisting of all matrices of the form:
$$S=\bma f&0&0&0&0\\ f^2&f^2{}_2&f^2{}_3&0&0\\ f^3&f^3{}_2&f^3{}_3&0&0\\ f^4&0&0&f^4{}_4&f^4{}_5\\f^5&0&0&f^5{}_4&f^5{}_5\ema\,\,\rm{such\,\,that}\,\, \det(S)\neq 0.$$

\begin{theorem}\label{th1.1}
  The para-CR structure $[(\om^1,\om^2,\om^3,\om^4,\om^5)]$ defined on $M$ by a representative $(\om^1,\om^2,\om^3,\om^4,\om^5)$ as in \eqref{flfo} locally uniquely defines a 10-dimensional principal bundle ${\mathcal G}=M\times G\to M$, with a 5-dimensional Lie group $G$ consisting of all matrices of the form
  \be
  U=\bma \frac{{\rm e}^{-\phi}}{r}&-\frac{{\rm e}^{-\phi}(s\bar{s}+r^4 u)}{2r^3}&-\frac{\bar{s}}{r^2}&0&-\frac{{\rm e}^\phi\bar{s}{}^2}{2r^3}\\&&&&\\ 0&r{\rm e}^{-\phi}&0&0&0\\&&&&\\0&\frac{s{\rm e}^{-\phi}}{r}&1&0&\frac{\bar{s}{\rm e}^{\phi}}{r}\\&&&&\\0&\frac{s^2{\rm e}^{-\phi}}{2r^3}&\frac{s}{r^2}&\frac{{\rm e}^{\phi}}{r}&\frac{{\rm e}^{\phi}(s\bar{s}-r^4 u)}{2r^3}\\&&&&\\ 0&0&0&0&r{\rm e}^\phi\ema,\label{U}
  \ee
and a rigid coframe $(\theta^1,\theta^2,\theta^3,\theta^4,\theta^5,\Om_1,\Om_2,\Om_3,\Om_4,\Om_5)$ on $\mathcal G$ satisfying:
  \be\begin{aligned}
\der \theta^1&=\theta^2\dz\theta^4-\theta^1\dz\Om_1\\
\der \theta^2&=\theta^3\dz\theta^4+\theta^2\dz(\Om_2-\tfrac12\Om_1)-\theta^1\dz\Om_3\\
\der \theta^3&=2\theta^3\dz\Om_2-\theta^2\dz\Om_3\\
\der \theta^4&=-\theta^2\dz\theta^5-\theta^4\dz(\tfrac12\Om_1+\Om_2)-\theta^1\dz\Om_4\\
\der \theta^5&=-2\theta^5\dz\Om_2+\theta^4\dz\Om_4\\
\der \Om_1&=-\theta^4\dz\Om_3+\theta^2\dz\Om_4-\theta^1\dz\Om_5\\
\der \Om_2&=-\theta^3\dz\theta^5-\tfrac12\theta^4\dz\Om_3-\tfrac12\theta^2\dz\Om_4\\
\der \Om_3&=-(\tfrac12\Om_1+\Om_2)\dz\Om_3+\theta^3\dz\Om_4-\tfrac12\theta^2\dz\Om_5\\
\der \Om_4&=(\Om_2-\tfrac12\Om_1)\dz\Om_4+\theta^5\dz\Om_3-\tfrac12\theta^4\dz\Om_5\\
\der \Om_5&=-\Om_1\dz\Om_5+2\Om_3\dz\Om_4.
\end{aligned}\label{edsf}\ee
\end{theorem}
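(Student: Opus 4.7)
The plan is to apply Cartan's method of equivalence to the $13$-dimensional $G_0$-bundle defined by \eqref{mog}, and show that the defining features of the flat model (constant Levi rank one together with $2$-nondegeneracy) force an essentially unique sequence of torsion normalizations that reduce $G_0$ down to the $5$-parameter group $G$ of \eqref{U}. As starting data I would use the base equations \eqref{sys0}, which already encode the full contact and pseudo-connection geometry downstairs on $M$.

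First, I would compute $\mathrm{d}\theta^i$ on $M \times G_0$ using \eqref{sys0} and the explicit expressions for the entries of the transition matrix in \eqref{mog}. The two-form $\theta^2 \wedge \theta^4$ appearing in $\mathrm{d}\theta^1$ carries the Levi torsion and its coefficient can be normalized to $1$; this fixes one combination of $f, f^2{}_2, f^4{}_4$, etc. Iterating with $\mathrm{d}\theta^2, \mathrm{d}\theta^3, \mathrm{d}\theta^4, \mathrm{d}\theta^5$, the 2-nondegeneracy condition ensures $\theta^3 \wedge \theta^4$ (resp.\ $\theta^4 \wedge \theta^5$) carries a nonzero coefficient that is likewise normalized to $1$, and the product structure of the two rank-$2$ blocks forces each $2 \times 2$ block $\bigl(\begin{smallmatrix} f^2{}_2 & f^2{}_3 \\ f^3{}_2 & f^3{}_3 \end{smallmatrix}\bigr)$ and $\bigl(\begin{smallmatrix} f^4{}_4 & f^4{}_5 \\ f^5{}_4 & f^5{}_5 \end{smallmatrix}\bigr)$ to become upper triangular, with the diagonal entries linked by a common scalar. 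After these $8$ successive normalizations, the surviving parameters can be rearranged as $(r, s, \bar{s}, u, \varphi)$ and yield precisely the matrix form \eqref{U}.

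Next, with $G$ isolated, I would introduce pseudo-connection $1$-forms $\Omega_1, \ldots, \Omega_5$ on the reduced $10$-dimensional bundle $\mathcal{G} = M \times G$, one for each of the five remaining group parameters, taken so that they absorb the residual pseudo-connection torsion in the $\mathrm{d}\theta^i$. Because the flat PDE system \eqref{flateq} has no essential differential invariants, all curvature-type torsions vanish identically on $\mathcal{G}$ and the $\mathrm{d}\theta^i$ close into the form stated in \eqref{edsf}. The equations for $\mathrm{d}\Omega_i$ are then determined by applying $\mathrm{d}\circ\mathrm{d}\equiv 0$ to the $\mathrm{d}\theta^i$ equations and solving for $\mathrm{d}\Omega_i$ modulo absorbable $\Omega_j \wedge \theta^k$ terms; the flatness guarantees this yields the symmetric, constant-coefficient form listed in \eqref{edsf}.

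Finally, linear independence of $(\theta^1, \ldots, \theta^5, \Omega_1, \ldots, \Omega_5)$ at each point follows automatically from the non-degeneracy assumptions that governed each normalization, so the coframe is rigid. Since \eqref{edsf} are constant-coefficient Maurer-Cartan equations, by Cartan's fundamental theorem they integrate to Maurer-Cartan forms of a $10$-dimensional Lie group, which one recognizes as the local symmetry group $\mathbf{SO}(3,2)$ of \eqref{flateq}. The hardest part, in my view, is not the structural reduction itself but the explicit verification that the five residual parameters assemble into the particular matrix shape \eqref{U}: one must carefully track the combined effect of all eight normalizations to see why $G$ is parametrized in that precise block-triangular form and to confirm that the explicit $\Omega_i$ satisfy the stated $\mathrm{d}\Omega_i$ equations without introducing any extra $\theta^i \wedge \theta^j$ terms.
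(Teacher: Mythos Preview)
Your approach via torsion normalization is correct in outline and would eventually prove the theorem, but it is not the route the paper takes. The paper's proof is a direct verification rather than a derivation: it observes that the ten forms $(\omega^1,\ldots,\omega^5,\varpi_1,\ldots,\varpi_5)$ on $M$ already fit into the entries of an $\mathfrak{so}(3,2)$-valued matrix $B$ (displayed explicitly in the proof), then defines the lifted coframe on $\mathcal{G}$ via the gauge-transformation formula $\omega = U\,\pi^*(B)\,U^{-1} - \mathrm{d}U\,U^{-1}$ with $U$ as in \eqref{U}, writes out the resulting $\theta^i$ and $\Omega_\mu$ explicitly in coordinates, and checks by direct computation that $\mathrm{d}\omega + \omega\wedge\omega = 0$, which is exactly the system \eqref{edsf}. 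No normalization procedure is run for this theorem; the matrix $U$ and the $\mathfrak{so}(3,2)$ target are posited and then verified. Your bottom-up equivalence-method route is essentially what the paper does later, in Section~3, for the \emph{general} non-flat system, where no such shortcut is available.

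One technical correction to your outline: the fifth parameter $u$ does not arise as a survivor of the reduction of $G_0$. Torsion normalization of the $13$-parameter group $G_0$ imposes nine conditions (not eight) and leaves only the four parameters $(\rho,\phi,f^2,\bar f^2)$; the variable $u$ enters afterward as a prolongation variable, namely the residual $\theta^1$-indeterminacy in $\Omega_1$ (compare \eqref{u1} in the general case). So $\mathcal{G}$ is reached by reduction followed by one prolongation step, and $\Omega_5$ is the connection form attached to that prolongation variable, not to a first-order group parameter.
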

\begin{proof}
The proof of this theorem follows from the observation that the forms  $(\om^1,\om^2,$ $\om^3,$ $\om^4,\om^5,\varpi_1,\varpi_2,\varpi_3,\varpi_4,\varpi_5)$ satisfying the EDS \eqref{sys0} constitute a pullback
$$B=
\bma
\tfrac12\varpi_1-\varpi_2&-\tfrac12\varpi_5&\varpi_4&\om^5&0\\
\om^1&-\tfrac12\varpi_1-\varpi_2&\om^4&0&\om^5\\
\om^2&-\varpi_3&0&\om^4&-\varpi_4\\
\om^3&0&-\varpi_3&\tfrac12\varpi_1+\varpi_2&-\tfrac12\varpi_5\\
0&\om^3&-\om^2&\om^1&-\tfrac12\varpi_1+\varpi_2
\ema
$$
to $M$, by an identity section $\sigma:M\to{\mathcal G}$, of a \emph{flat} $\soa(3,2)$-valued Cartan connection
\be
\omega =
\bma
\tfrac12\Om_1-\Om_2&-\tfrac12\Om_5&\Om_4&\theta^5&0\\
\theta^1&-\tfrac12\Om_1-\Om_2&\theta^4&0&\theta^5\\
\theta^2&-\Om_3&0&\theta^4&-\Om_4\\
\theta^3&0&-\Om_3&\tfrac12\Om_1+\Om_2&-\tfrac12\Om_5\\
0&\theta^3&-\theta^2&\theta^1&-\tfrac12\Om_1+\Om_2
\ema
\label{carcon}\ee
on the bundle ${\mathcal G}\stackrel{\pi}{\to}M$. The relation between the pullback $B$ and the Cartan connection $\omega$ is given by
\be
\omega=U\cdot\pi^*(B)\cdot U^{-1}-\der{U}\cdot U^{-1},\label{UB}\ee
with $U$ given by \eqref{U}. On the identity section we have $U=\id$, and $\omega=B$.
Relation \eqref{UB}, when written component by component, gives
$$\bma
\theta^1\\\theta^2\\\theta^3\\\theta^4\\\theta^5\ema =\bma r^2&0&0&0&0\\ s&r{\rm e}^\phi&0&0&0\\\frac{s^2}{2r^2}&\frac{s{\rm e}^\phi}{r}&{\rm e}^{2\phi}&0&0\\ \bar{s}&0&0&r{\rm e}^{-\phi}&0\\ -\frac{\bar{s}{}^2}{2r^2}&0&0&-\frac{\bar{s}{\rm e}^{-\phi}}{r}&{\rm e}^{-2\phi}\ema\cdot\bma\om^1\\\om^2\\\om^3\\\om^4\\\om^5\ema,$$
and
$$\begin{aligned}
  \Om_1&=\frac{2\der r}{r}-u r^2\om^1-\frac{\bar{s}{\rm e}^\phi}{r}\om^2+\frac{s{\rm e}^{-\phi}}{r}\om^4,\\
  \Om_2&=-\der \phi+\frac{s\bar{s}}{2r^2}\om^1+\frac{\bar{s}{\rm e}^\phi}{2r}\om^2+\frac{s{\rm e}^{-\phi}}{2r}\om^4+z_{xx}\om^5,\\
  \Om_3&=\frac{\der s}{r^2}-\frac{s}{r^2}(\der\phi+\frac{\der r}{r})-\tfrac12s u\om^1-\frac{{\rm e}^{\phi}(s\bar{s}+r^4 u)}{2r^3}\om^2-\frac{\bar{s}{\rm e}^{2\phi}}{r^2}\om^3+\frac{{\rm e}^{-\phi}s^2}{2r^3}\om^4+\frac{s z_{xx}}{r^2}\om^5,\\
  \Om_4&=\frac{\der \bar{s}}{r^2}+\frac{\bar{s}}{r^2}(\der\phi-\frac{\der r}{r})-\tfrac12\bar{s} u\om^1-\frac{\bar{s}^2{\rm e}^{\phi}}{2r^3}\om^2+\frac{{\rm e}^{-\phi}(s\bar{s}-r^4 u)}{2r^3}\om^4-\frac{{\rm e}^{-2\phi}(s+{\rm e}^{2\phi}\bar{s}z_{xx})}{r^2}\om^5,\\
  \Om_5&=-\der u-\frac{2u\der r}{r}+\frac{2s\bar{s}\der \phi}{r^4}+\frac{s\der\bar{s}}{r^4}-\frac{\bar{s}\der s}{r^4}+\\
&\quad \tfrac12r^2 u^2\om^1+\frac{\bar{s}{\rm e}^{\phi}u}{r}\om^2+\frac{{\rm e}^{2\phi}\bar{s}{}^2}{r^4}\om^3-\frac{{\rm e}^{-\phi}su}{r}\om^4-\frac{s{\rm e}^{-2\phi}(s+2{\rm e}^{2\phi}\bar{s}z_{xx})}{r^4}\om^5.
  \end{aligned}
$$
\noindent
In these expressions the forms $(\om^1,\om^2,\om^3,\om^4,\om^5)$ are as in \eqref{flfo}. Check, in particular, that on the identity section given by $r=1$, $\phi=0$, $f=0$, $\bar{f}=0$, $u=0$, the forms $(\theta^1,\theta^2,\theta^3,\theta^4,\theta^5,\Om_1,\Om_2,\Om_3,\Om_4,\Om_5)$ become respectively $(\om^1,\om^2,\om^3,\om^4,\om^5,\varpi_1=0,\varpi_2=z_{xx}\om^5,\varpi_3=0,\varpi_4=0,\varpi_5=0$), which explains why we introduced the forms $\omega_i$ in \eqref{ofo}. 

The fact that the above coframe $(\theta^1,\theta^2,\theta^3,\theta^4,\theta^5,\Om_1,\Om_2,\Om_3,\Om_4,\Om_5)$ on $\mathcal G$ satisfies the EDS \eqref{edsf} is equivalent to the following equality
$$\der\omega+\omega\dz\omega=0, $$
satisfied by the Cartan connection $\omega$. This can be checked by a direct calculation using the explicit expressions for $(\theta^1,\theta^2,\theta^3,\theta^4,\theta^5,\Om_1,\Om_2,\Om_3,\Om_4,\Om_5)$ given above.

Thus, the Cartan connection $\omega$ given by \eqref{carcon} is flat, and the $\soa(3,2)$-valued 1-form $\omega$ can be interpreted as a Maurer-Cartan form on the group $\sog(3,2)$. The Cartan bundle ${\mathcal G}\to M$ is then identified as a realization of the homogeneous model $G\to \sog(3,2)\to M=\sog(3,2)/G$, which has a natural para-CR structure related to forms $(\om^1,\om^2,\om^3,\om^4,\om^5)$ being in the same equivalence class as the respective descendent forms $(\theta^1,\theta^2,\theta^3,\theta^4,\theta^5)$. Obviously this structure has $\sog(3,2)$ as its group of symmetries.\end{proof}

\section{Nonflat case; four basic invariants}

Now we generalize the flat example of Subsections 
\ref{system}-\ref{flatm}  
to systems of PDEs on the plane of
the form 
\be 
z_{y}=F(x,y,z,z_x,z_{xx})\quad\quad\&\quad\quad
z_{xxx}=H(x,y,z,z_x,z_{xx}).\label{sysf} \ee
We introduce the standard notation $$p=z_x,\quad q=z_y,\quad r=z_{xx},$$
i.e. we have
$$z_y=F(x,y,z,p,r),\quad\quad\&\quad\quad z_{xxx}=H(x,y,z,p,r).$$
We note that 
for this system of equations to be equivalent to a 2-nondegenerate para-CR manifold we have to assume $F_r=0$ and $F_{pp}\neq 0$.
In addition, this system is of finite type, or, what is the
same, its general solution can be written as $z=z(x,y;\bar{x},\bar{y},\bar{z})$,
if and only if $D^3F=\Delta H$, 
with 
\be 
D=\partial_x+p\partial_z+r\partial_p+H\partial_r\quad\quad\&\quad\quad
\Delta=\partial_y+F\partial_z+DF\partial_p+D^2F\partial_r.\label{pndha}
\ee
From now on, we consider only systems \eqref{sysf} satisfying
\be
F_r=0,\quad\quad\& \quad\quad F_{pp}\neq 0,\quad\quad\&\quad\quad D^3F=\Delta H.
\label{sysfc}\ee

We now define a 2-nondegenerate para-CR structure on a 5-manifold $M$ associated with the equations \eqref{sysf}, \eqref{sysfc} by introducing an equivalence class of 1-forms as in \eqref{eqa}-\eqref{eqb}, but this time with forms  $(\om^1,\om^2,\om^3,\om^4,\om^5)$ given by:
\be
\begin{aligned}
  \om^1&=\der z-p\der x-F\der y\\
  \om^2&=\der p-r\der x-DF\der y\\
  \om^3&=\der r-H\der x-D^2F\der y\\
  \om^4&=\der x\\
  \om^5&=\der y.  
  \end{aligned}
\label{iniom}\ee
These forms live on a manifold $M$  parameterized by $(x,y,z,p,r)$, which is the 5-dimensional manifold of second jets for functions $z=z(x,y)$. The differentials of the initial forms are as follows:
\be
\begin{aligned}
  \der\om^1=&-F_z\om^1\dz\om^5-\om^2\dz\om^4-F_p\om^2\dz\om^5,\\
  \der\om^2=&-DF_z\om^1\dz\om^5-(DF_p+F_z)\om^2\dz\om^5-\om^3\dz\om^4-F_p\om^3\dz\om^5,\\
  \der\om^3=&-H_z\om^1\dz\om^4-(DDF_z+F_pH_z)\om^1\dz\om^5-H_p\om^2\dz\om^4+\\&\tfrac13(DF_p H_r-3DF_z-\Delta H_r+DH_r F_p-3F_pH_p)\om^2\dz\om^5-H_r\om^3\dz\om^4-\\&(2DF_p+F_z+F_pH_r)\om^3\dz\om^5,\\
  \der\om^4=&\,0,\\
  \der\om^5=&\,0.
  \end{aligned}\label{iniomi}
\ee
Here we introduce abbreviations such as $\Delta H_r$, or $DDF_z$, and abbreviations analogous to them. They mean:
$$\Delta H_r=\Delta(\partial_rH)\quad\mathrm{and}\quad DDF_z=D(D(\partial_zF)).$$
\begin{definition}
A 5-dimensional para-CR structure related to the point equivalence class of PDEs \eqref{sysf} satisfying \eqref{sysfc} via the representatives \eqref{iniom} will be called \emph{PDE five variables para-CR structure}.   
  \end{definition}

Now, till the end of this Section, will adopt the convention that if $f$ is a differentiable function on $M$, then its \emph{coframe derivatives} will be denoted by a subscript running from 1 to 5:
$$\der f=: f_1\om^1+f_2\om^2+f_3\om^3+f_4\om^4+f_5\om^5,$$
i.e.
\be f_\mu=\frac{\partial df}{\partial\om^\mu},\quad\mu=1,\dots,5.\label{parm}\ee
Now we consider the most general forms $(\theta^1,\theta^2,\theta^3,\theta^4,\theta^5)$ defining the same para-CR structure:
\be\bma
\theta^1\\\theta^2\\\theta^3\\\theta^4\\\theta^5\ema =\bma f^1&0&0&0&0\\ f^2&\rho{\rm e}^{\phi}&f^4&0&0\\ f^5&f^6&f^7&0&0\\ \bar{f}{}^2&0&0&\rho{\rm e}^{-\phi}&\bar{f}{}^4\\\bar{f}{}^5&0&0&\bar{f}{}^6&\bar{f}{}^7\ema\bma\om^1\\\om^2\\\om^3\\\om^4\\\om^5\ema,\label{mogi}\ee
and view them as \emph{lifted 1-forms} on the bundle $M\times G_0\to M$, where \be\begin{aligned}
  G_0=\Big\{\glg(5,\bbR)\ni S=&\bma f^1&0&0&0&0\\ f^2&\rho{\rm e}^{\phi}&f^4&0&0\\ f^5&f^6&f^7&0&0\\ \bar{f}{}^2&0&0&\rho{\rm e}^{-\phi}&\bar{f}{}^4\\\bar{f}{}^5&0&0&\bar{f}{}^6&\bar{f}{}^7\ema,\,\,\mathrm{with}\\
  &f^1,f^2,\bar{f}{}^2,f^4,\bar{f}{}^4,f^5,\bar{f}^5,f^6,\bar{f}{}^6,f^7,\bar{f}{}^7,\phi\in\bbR,\,\rho>0\Big\}.
\end{aligned}\label{grG0}\ee
We force the lifted 1-forms $(\theta^1,\theta^2,\theta^3,\theta^4,\theta^5)$ to satisfy a nonzero curvature version of equations \eqref{edsf}. In particular we want the forms $(\theta^1,\dots,\theta^5)$ to satisfy the first five of these equations:
\be
\begin{aligned}
E^1&=\der \theta^1-\Big(\theta^2\dz\theta^4-\theta^1\dz\Om_1\Big)-\sum_{1\leq i<j\leq 5}t^1{}_{ij}\theta^i\dz\theta^j\\
E^2&=\der \theta^2-\Big(\theta^3\dz\theta^4+\theta^2\dz(\Om_2-\tfrac12\Om_1)-\theta^1\dz\Om_3\Big)-\sum_{1\leq i<j\leq 5}t^2{}_{ij}\theta^i\dz\theta^j\\
E^3&=\der \theta^3-\Big(2\theta^3\dz\Om_2-\theta^2\dz\Om_3\Big)-\sum_{1\leq i<j\leq 5}t^3{}_{ij}\theta^i\dz\theta^j\\
E^4&=\der \theta^4-\Big(-\theta^2\dz\theta^5-\theta^4\dz(\tfrac12\Om_1+\Om_2)-\theta^1\dz\Om_4\Big)-\sum_{1\leq i<j\leq 5}t^4{}_{ij}\theta^i\dz\theta^j\\
E^5&=\der \theta^5-\Big(-2\theta^5\dz\Om_2+\theta^4\dz\Om_4\Big)-\sum_{1\leq i<j\leq 5}t^5{}_{ij}\theta^i\dz\theta^j,
\end{aligned}
\label{norme1}\ee
with `torsions' $t^i{}_{jk}$ \emph{as `minimal' as possible}. Although an ultimate goal would be to find a unique way of \emph{normalizing} these torsions in such a way that the resulting system for the forms $(\theta^i,\Om_\mu)$ describes a \emph{curvature of an} $\soa(2,3)$ \emph{Cartan connection}, we are not that ambitious here. Our aim is to find \emph{all pairwise locally nonequivalent homogeneous models} for these para-CR structures, so we are happy with any set of normalizations allowing to achieve this task. 

Actually, in the following we will require that the forms  $(\theta^i,\Om_\mu)$ should be linearly independent at each point, and that they should satisfy equations \eqref{norme} with $t^1{}_{ij}=t^2_{ij}=t^4{}_{ij}=0$ for all $i,j=1,2\dots,5$, and $t^3{}_{12}=t^3{}_{15}=t^3{}_{24}=t^3{}_{25}=0$, and $t^3{}_{ij}=0$ for all $3\leq i<j\leq 5$, as well as $t^5{}_{13}=t^5_{14}=t^5{}_{ij}=0$, for all $2\leq i<j\leq 5$ with an exception of $t^5{}_{45}\neq 0$. This means that we will require that our invariant forms will satisfy the following restricted form of equations \eqref{norme1}:
\be
\begin{aligned}
E^1&=\der \theta^1-\Big(\theta^2\dz\theta^4-\theta^1\dz\Om_1\Big)\\
E^2&=\der \theta^2-\Big(\theta^3\dz\theta^4+\theta^2\dz(\Om_2-\tfrac12\Om_1)-\theta^1\dz\Om_3\Big)\\
E^3&=\der \theta^3-\Big(2\theta^3\dz\Om_2-\theta^2\dz\Om_3\Big)-t^3{}_{13}\theta^1\dz\theta^3-t^3{}_{14}\theta^1\dz\theta^4-t^3{}_{23}\theta^2\dz\theta^3\\
E^4&=\der \theta^4-\Big(-\theta^2\dz\theta^5-\theta^4\dz(\tfrac12\Om_1+\Om_2)-\theta^1\dz\Om_4\Big)\\
E^5&=\der \theta^5-\Big(-2\theta^5\dz\Om_2+\theta^4\dz\Om_4\Big)-t^5{}_{12}\theta^1\dz\theta^2-t^3{}_{13}\theta^1\dz\theta^5-t^5{}_{45}\theta^4\dz\theta^5.
\end{aligned}
\label{norme}\ee
Note that we additionally require an equality of the coefficients at $\theta^1\dz\theta^3$ in $\der\theta^3$ and at $\theta^1\dz\theta^5$ in $\der\theta^5$.

We have the following theorem.
\begin{theorem}\label{the21}
  The torsion normalizations equations \eqref{norme} define the forms $(\theta^1,\theta^2,\theta^3,$ $\theta^4,\theta^5)$ as
$$\bma
  \theta^1\\\theta^2\\\theta^3\\\theta^4\\\theta^5\ema =S\cdot\bma -1&0&0&0&0\\0&1&0&0&0\\\frac{(2H_r^2+9H_p-3DH_r)}{18}&\frac{H_r}{3}&-1&0&0\\0&0&0&1&F_p\\\frac{3F_{pp}F_{pppp}-5F_{ppp}^2}{18F_{pp}^2}&0&0&\frac{F_{ppp}}{3F_{pp}}&\frac{F_{ppp}F_p-3F_{pp}^2}{3F_{pp}} \ema\cdot\bma\om^1\\\om^2\\\om^3\\\om^4\\\om^5\ema,$$
  with the matrix $S$ given by
  \be
  S=\bma \rho^2&0&0&0&0\\ f^2&\rho{\rm e}^\phi&0&0&0\\\frac{(f^2)^2}{2\rho^2}&\frac{f^2{\rm e}^\phi}{\rho}&{\rm e}^{2\phi}&0&0\\\bar{f}{}^2&0&0&\rho{\rm e}^{-\phi}&0\\ -\frac{(\bar{f}{}^2)^2}{2\rho^2}&0&0&\frac{-\bar{f}{}^2{\rm e}^{-\phi}}{\rho}&{\rm e}^{-2\phi}\ema.
  \label{matris}\ee
  The nonvanishing torsions $t^3{}_{14}$, $t^3{}_{23}$, $t^5{}_{12}$, $t^5{}_{45}$ read:
$$
\begin{aligned}
  t^3{}_{14}=\tfrac{{\rm e}^{3\phi}}{27\rho^3}A,\\
  t^5{}_{12}=\tfrac{{\rm e}^{-3\phi}}{27\rho^3}B,\\
  t^3{}_{23}=\tfrac{{\rm e}^{-\phi}}{3\rho}C,\\
  t^5{}_{45}=\tfrac{{\rm e}^{\phi}}{3\rho}\tilde{C},
  \end{aligned}
$$
where
$$
\begin{aligned}
  A&=-\tfrac12(9D^2H_r-27DH_p-18DH_r H_r+18H_p H_r+4H_r^3+54H_z),\\
  B&=\frac{40F_{ppp}^3-45 F_{pp}F_{ppp}F_{pppp}+9F_{pp}^2F_{ppppp}}{2F_{pp}^3},\\
  C&=\frac{2F_{ppp}+F_{pp}H_{rr}}{F_{pp}},\\
  \tilde{C}&=\frac{(DF_p+F_z)C-F_pC_4+C_5}{2F_{pp}}.
  \end{aligned}
$$ 
The vanishing or not of each of the quantities $A$, $B$, $C$, $\tilde{C}$ is an invariant property of the corresponding para-CR structure. 

The forms $\Om_1$, $\Om_2$, $\Om_3$, $\Om_4$ are given explicitly in terms of the defining functions of the para-CR structure, their derivatives, fiber variables $(\rho,\phi,f_2,\bar{f}{}_2)$, and one new real variable, which we call $u_1$.   
\end{theorem}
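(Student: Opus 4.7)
The strategy is Cartan's absorption-and-reduction method applied to the $13$-parameter lift \eqref{mogi}. I would start by computing $\der\theta^i$ for $i=1,\ldots,5$ using the explicit formulas \eqref{iniomi} for $\der\om^j$ and Leibniz's rule. Each resulting $2$-form decomposes into (a) `semi-basic' pieces $\theta^j\dz\theta^k$ whose coefficients depend on $F$, $H$ and their derivatives together with the $G_0$-variables, and (b) `vertical' pieces of the form $\theta^j\dz\der(\text{group variable})$. The latter are packaged into the connection $1$-forms $\Omega_1,\Omega_2,\Omega_3,\Omega_4$ so that the target equations \eqref{norme} become identities in the torsion normalization scheme.

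Working through the five equations $E^i=0$ in order, the plan is: from $E^1=0$ one reads off that $\theta^1$ must be a multiple of $\om^1$ alone, which forces $f^1=-\rho^2$ (the sign fixing the orientation convention of the statement) and extracts $\Omega_1$ from $\der\rho$, $\der\phi$ and the off-diagonal lift pieces. The equations $E^2=0$ and $E^4=0$, each of which admits only the two specific nonvertical terms displayed, impose purely algebraic relations among $f^2,f^4,f^5,f^6,f^7,\bar f^2,\bar f^4,\bar f^6,\bar f^7$, forcing them to the values appearing in the matrix \eqref{matris}; for instance the relations $f^4=0$ and $f^7={\rm e}^{2\phi}$ come from absorbing the $\om^3\dz\om^4$- and $\om^3\dz\om^5$-terms in $\der\om^2$, while $\bar f^4$ is determined by balancing the $F_p$-term in $\der\om^1$ against $\theta^2\dz\theta^4$, yielding the row $(0,0,0,1,F_p)$ inside the right-hand diagonal block. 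The remaining normalizations $E^3$ and $E^5$ fix $f^5,\bar f^5$ and the coefficients $\tfrac{F_{ppp}}{3F_{pp}}$, $\tfrac{H_r}{3}$, etc.\ that appear in the third and fifth rows: these arise from requiring that the cross-terms $\theta^3\dz\theta^4$, $\theta^3\dz\theta^5$, $\theta^2\dz\theta^5$ in $\der\theta^3$ (resp.\ in $\der\theta^5$) vanish, while the self-consistency condition that the coefficient of $\theta^1\dz\theta^3$ in $\der\theta^3$ equal that of $\theta^1\dz\theta^5$ in $\der\theta^5$ eliminates the final ambiguity between what goes into torsion and what into $\Omega_2$.

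With the $\theta^i$ pinned down to \eqref{matris}, the nonabsorbable torsions are then read off directly: $t^3_{14}$, $t^3_{23}$, $t^5_{12}$, $t^5_{45}$ are, up to the universal weight factors ${\rm e}^{\pm k\phi}\rho^{-k}$ dictated by the way the $\theta^i$ scale under the residual $(\rho,\phi)$-action, equal to certain polynomial combinations of derivatives of $F$ and $H$. Plugging the normalized expressions into $\der\theta^3$ and $\der\theta^5$ and collecting monomials of the form $\om^i\dz\om^j$ produces exactly the quantities $A,B,C,\tilde C$ of the statement: $A$ comes from the $\om^1\dz\om^4$-coefficient in $\der\om^3$ combined with the derivatives of $H_r$, $H_p$, $H_z$; $B$ from the coefficients generated by differentiating the $\tfrac{F_{ppp}}{3F_{pp}}$ and $\tfrac{F_{ppp}F_p-3F_{pp}^2}{3F_{pp}}$ entries in the fifth row; $C$ is the resonance between the coefficient $\tfrac{H_r}{3}$ in the $\theta^3$-block and the coefficient $\tfrac{F_{ppp}}{3F_{pp}}$ in the $\theta^5$-block; and $\tilde C$ is precisely what the self-consistency condition $t^3_{13}=t^5_{13}$ becomes after computing $\der C$ with the coframe-derivative notation \eqref{parm}, using $\Delta=\partial_y+F\partial_z+DF\partial_p+D^2F\partial_r$ and $D$ of \eqref{pndha} to express $C_4$ and $C_5$. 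The connection forms $\Omega_1,\Omega_2,\Omega_3,\Omega_4$ are recovered at the same time: three of them involve $\der\rho,\der\phi,\der f^2,\der\bar f^2$ and lifted contributions, while the need to absorb one residual $\om^1$-term requires the introduction of exactly one additional secondary group variable $u_1$ (analogous to the variable $u$ in \eqref{U} of the flat case).

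Invariance of the vanishing loci of $A,B,C,\tilde C$ is automatic: since any two systems $(\theta^i)$ satisfying \eqref{norme} differ only by the residual $4$-parameter action $(\rho,\phi,f^2,\bar f^2)$, the weighted torsions $t^3_{14},t^3_{23},t^5_{12},t^5_{45}$ transform by nowhere-vanishing factors, so the zero sets of $A,B,C,\tilde C$ on $M$ are well defined. The main obstacle in the proof is purely the sheer bulk of the symbolic computation, especially the step where the $E^5$-equation is forced to produce the same $t^3_{13}$ as $E^3$; this is where the explicit formula for $\tilde C$ in terms of $C_4$ and $C_5$ is essential, and it is the only place where one has to combine the total-derivative operators $D$ and $\Delta$ of \eqref{pndha} with the compatibility condition $D^3F=\Delta H$ of \eqref{sysfc} to ensure that no further torsion appears.
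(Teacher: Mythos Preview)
Your sketch follows essentially the same Cartan absorption-and-reduction procedure as the paper: successively wedging the equations $E^i$ against subsets of the $\theta^j$ to isolate and normalize torsion coefficients, thereby pinning down the group parameters layer by layer and reading off $A,B,C,\tilde C$ from the residual nonabsorbable torsions. Two small corrections are worth noting: the paper obtains $f^7=-e^{2\phi}$ (not $+e^{2\phi}$), and $\tilde C$ is not extracted from the self-consistency condition on $t^3{}_{13}$ but rather directly as the torsion $t^5{}_{45}$ via $E^5\wedge\theta^1=0$; the displayed relation expressing $\tilde C$ in terms of $C$ and its coframe derivatives $C_4,C_5$ is then obtained separately as a consequence of the integrability identity $\der^2 H_r\equiv 0$ for the base coframe~\eqref{iniomi}.
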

\begin{proof}
We use the normalization equations \eqref{norme}.
  
  We first impose equation
  $$0=E^1\dz\theta^1=(\der\theta^1-\theta^2\dz\theta^4)\dz\theta^1.$$
  This immediately gives
  \be
  f^4=0,\quad\quad\bar{f}{}^4=F_p\rho\mathrm{e}^{-\phi},\quad\quad f^1=-\rho^2\label{f4}\ee
 in \eqref{mogi}. Then we go to impose
 $$
 \begin{aligned}
  0&=E^2\dz\theta^1\dz\theta^2=(\der\theta^2-\theta^3\dz\theta^4)\dz\theta^1\dz\theta^2,\\
  0&=E^4\dz\theta^1\dz\theta^4=(\der\theta^4-\theta^5\dz\theta^2)\dz\theta^1\dz\theta^4,
 \end{aligned}
 $$
 which additionally gives
 \be
 f^7=-{\rm e}^{2\phi},\quad\quad\&\quad\quad \bar{f}{}^7=-F_{pp}{\rm e}^{-2\phi}+F_p\bar{f}{}^6\label{f7}\ee
 in \eqref{mogi}.

 After these normalizations we have \eqref{mogi} in the form
 $$\bma
\theta^1\\\theta^2\\\theta^3\\\theta^4\\\theta^5\ema =\bma -\rho^2&0&0&0&0\\ f^2&\rho{\rm e}^{\phi}&0&0&0\\ f^5&f^6&-{\rm e}^{2\phi}&0&0\\ \bar{f}{}^2&0&0&\rho{\rm e}^{-\phi}&\rho\mathrm{e}^{-\phi}F_p\\\bar{f}{}^5&0&0&\bar{f}{}^6&-{\rm e}^{-2\phi}F_{pp}+\bar{f}{}^6F_p\ema\bma\om^1\\\om^2\\\om^3\\\om^4\\\om^5\ema.
$$
Now we impose the first equation
$$0=E^1=\der\theta^1-(\theta^2\dz\theta^4-\theta^1\dz\Omega_1).$$
This defines the 1-form $\Omega_1$ as:
\be\Omega_1=2\der\log(\rho)-u_1\theta^1+\frac{\bar{f}{}^2}{\rho^2}\theta^2+(-\frac{f^2}{\rho^2}+\frac{\bar{f}{}^6F_z{\rm e}^{3\phi}F_z}{\rho F_{pp}})\theta^4-\frac{{\rm e}^{2\phi}F_z}{F_{pp}}\theta^5.\label{u1}\ee
Note that to define this form we needed to introduce a \emph{new} variable
$u_1$.

Thus we have normalized our forms $(\theta^i,\Omega_\mu)$ in such a way that
$$\der\theta^1=\theta^2\dz\theta^4-\theta^1\dz\Omega_1.$$

Let us pass to the equations $E^2=0$ and $E^4=0$. We first impose
$$E^2\dz\theta^1=E^4\dz\theta^1=0.$$
It is easy to check that this is equivalent to
$$\begin{aligned}\Omega_2&=-\der\phi+u_2\theta^1+\frac{2\rho{\rm e}^\phi\bar{f}{}^6-3\bar{f}{}^2}{2\rho^2}\theta^2+\frac{{\rm e}^{2\phi}(2DF_p+F_z)}{2F_{pp}}\theta^5+\\&-\frac{3f^2F_{pp}+\rho\mathrm{e}^{3\phi}\bar{f}{}^6(2DF_p+F_z)+2\rho\mathrm{e}^{-\phi}F_{pp}}{2F_{pp}\rho^2}\theta^4.
\end{aligned}
$$
Now it is easy to solve for $\Omega_3$ and $\Omega_4$ from
$$0=E^2=E^4.$$
This gives
$$\begin{aligned}
  \Omega_3&=-\frac{\der f_2}{\rho^2}+\dots-u_3 \theta^1\\
  \Omega_4&=-\frac{\der \bar{f}{}_2}{\rho^2}+\dots-u_4 \theta^1,\end{aligned}$$
where we have indicated that both of these forms are given modulo an addition of a term proportional to $\theta^1$ by introducing new variables $u_3$ and $u_4$.

Next we impose
$$0=E^3\dz\theta^1\dz\theta^2=E^5\dz\theta^1\dz\theta^4.$$
This gives
\be
f^6=-\frac{f^2{\rm e}^\phi}{\rho}+\tfrac13{\rm e}^{2\phi}H_r\quad\quad\&\quad\quad\bar{f}{}^6=\frac{\bar{f}{}^2{\rm e}^{-\phi}}{\rho}+\frac{F_{ppp}{\rm e}^{-2\phi}}{3F_{pp}}\label{f6}\ee
in\eqref{mogi}.

Now,
$$0=E^3\dz\theta^1=E^5\dz\theta^1,$$
gives $t^3{}_{23}$ and $t^5{}_{45}$ precisely as in the thesis of the theorem. It also gives that:
\be f^5=-\frac{(f^2)^2}{2\rho^2}-\frac{1}{18}\mathrm{e}^{2\phi}(3DH_r-9H_p-2H_r^2)\quad\&\quad\bar{f}{}^5=\frac{(\bar{f}{}^2)^2}{2\rho^2}-\frac{1}{18}\mathrm{e}^{-2\phi}\frac{5F_{ppp}^2-3F_{pp}F_{pppp}}{F_{pp}^2}.\label{f5}\ee

Finally,
$$0=E^3=E^5$$
gives $t^3{}_{14}$ and $t^5{}_{12}$ precisely as in the thesis of the theorem. It also gives an explicit formula for
$t^3{}_{13}$, $u_2$, $u_3$ and $u_4$. In particular,
\be
t^3{}_{13}=\frac{f^2 C\mathrm{e}^{-\phi}}{6\rho^3}+\frac{\bar{f}{}^2 \tilde{C}\mathrm{e}^{\phi}}{6\rho^3}-\frac{T}{\rho^2},\label{TQ}\ee
with a function $T$ on $M$ having a property that it vanishes when $C\equiv 0$. It is also worth mentioning that in the obtained formulas $u_2$ depends on the para-CR structure and the variables  $(\rho,\phi,f^2,\bar{f}{}^2)$, and $u_3$ and $u_4$ depend on the para-CR structure and the variables  $(\rho,\phi,f_2,\bar{f}{}_2,u_1)$.

Note that with the normalizations \eqref{f4}, \eqref{f7}, \eqref{f6} and \eqref{f5}, the matrix $S$ bringing the forms $(\om^1,\om^2,\om^3,\om^4,\om^5)$ to $(\theta^1,\theta^2,\theta^3,\theta^4,\theta^5)$ becomes precisely as in the thesis of the theorem, provided that we change fiber coordinates according to $(f^2,\bar{f}{}^2)\to (-f^2,-\bar{f}{}^2)$.

The relation between relative invariant $\tilde{C}$ and $C$ together with its coframe derivatives, is a consequence of integrablity conditions ($\der^2\equiv 0$) for the system \eqref{iniomi} and, in particular, the condition that $\der^2H_r\equiv 0$.

This finishes the proof.
\end{proof}

An immediate consequence of this theorem is the following
\begin{corollary}\label{sysc}
It is always possible to force the lifted coframe $(\theta^1,\theta^2,\theta^3,\theta^4,\theta^5)$ of a PDE five variables para-CR structure to satisfy the following exterior differential system: 
\be
\begin{aligned}
\der \theta^1=&-\theta^1\dz\Om_1+\theta^2\dz\theta^4\\
\der \theta^2=&\,\theta^2\dz(\Om_2-\tfrac12\Om_1)-\theta^1\dz\Om_3+\theta^3\dz\theta^4\\
\der \theta^3=&\,2\theta^3\dz\Om_2-\theta^2\dz\Om_3+Q\,\theta^1\dz\theta^3+\tfrac{{\rm e}^{3\phi}}{27\rho^3}A\,\theta^1\dz\theta^4+\tfrac{{\rm e}^{-\phi}}{3\rho}C\,\theta^2\dz\theta^3\\
\der \theta^4=&-\theta^2\dz\theta^5-\theta^4\dz(\tfrac12\Om_1+\Om_2)-\theta^1\dz\Om_4\\
\der \theta^5=&-2\theta^5\dz\Om_2+\theta^4\dz\Om_4+\tfrac{{\rm e}^{-3\phi}}{27\rho^3}B\,\theta^1\dz\theta^2+Q\,\theta^1\dz\theta^5+\tfrac{{\rm e}^{\phi}}{3\rho}\tilde{C}\,\theta^4\dz\theta^5.
\end{aligned}
\label{normuj}\ee
Here, the functions $A,B,C$ and $\tilde{C}$ are functions on the base manifold $M$, where the para-CR structure is defined, and are obtained in terms of the functions $F$ and $H$ defining the para-CR structure and their derivatives, i.e. they do not depend on the fiber coordinates $(f^2,\bar{f}{}^2,\rho,\phi)$. The function $Q$ depends on fiber coordinates as $t^3{}_{13}$ in \eqref{TQ}.\qed
\end{corollary}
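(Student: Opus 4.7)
The plan is to obtain \eqref{normuj} as a direct reformulation of Theorem~\ref{the21}: all of the hard computational work has already been carried out there, and what remains is bookkeeping plus one short consistency check.

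First, I would invoke Theorem~\ref{the21} to fix the normalized coframe $(\theta^1,\ldots,\theta^5)$ on the bundle $M\times G_0$, together with the lift forms $\Omega_1,\Omega_2,\Omega_3,\Omega_4$ expressed in terms of the base variables, the fiber parameters $(\rho,\phi,f^2,\bar{f}^2)$, and the auxiliary variable $u_1$. By construction these forms satisfy the normalization system \eqref{norme}, which already carries the exact shape claimed in \eqref{normuj} for $\der\theta^1$, $\der\theta^2$ and $\der\theta^4$.

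Next, I would substitute into the $\der\theta^3$ and $\der\theta^5$ equations of \eqref{norme} the four explicit torsion formulas
\[
t^3{}_{14}=\tfrac{\mathrm{e}^{3\phi}}{27\rho^3}A,\qquad t^3{}_{23}=\tfrac{\mathrm{e}^{-\phi}}{3\rho}C,\qquad t^5{}_{12}=\tfrac{\mathrm{e}^{-3\phi}}{27\rho^3}B,\qquad t^5{}_{45}=\tfrac{\mathrm{e}^{\phi}}{3\rho}\tilde{C}
\]
supplied by Theorem~\ref{the21}. Since $A,B,C,\tilde{C}$ are assembled from $F,H$ and their coframe derivatives alone, they descend to functions on the base $M$, which is precisely the invariance claim in the statement.

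Finally, I would assign to the common value of the coefficient of $\theta^1\dz\theta^3$ in $\der\theta^3$ and of the coefficient of $\theta^1\dz\theta^5$ in $\der\theta^5$ a single fiber-dependent name $Q:=t^3{}_{13}$, whose dependence on $(\rho,\phi,f^2,\bar{f}^2)$ is the one displayed in \eqref{TQ}. The only delicate point, and the step I would expect to require most care, is the consistency of the coupling condition encoded in \eqref{norme} with the simultaneous determination of $u_2,u_3,u_4$ and $t^3{}_{13}$ forced by $E^3=E^5=0$; this, however, is precisely what the final portion of the proof of Theorem~\ref{the21} guarantees. Collecting the pieces then produces \eqref{normuj} verbatim.
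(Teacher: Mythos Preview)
Your proposal is correct and matches the paper's approach: the corollary is presented in the paper without proof (marked by \qed), precisely because it is a direct restatement of the structure equations \eqref{norme} with the explicit torsion values computed in Theorem~\ref{the21} substituted in. Your bookkeeping---invoking the normalized coframe, plugging in $t^3{}_{14},t^3{}_{23},t^5{}_{12},t^5{}_{45}$, and naming $Q:=t^3{}_{13}$ via \eqref{TQ}---is exactly the intended reading.
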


\section{Cartan's reduction: homogeneous models}

\subsection{New notation and the relative invariants}
Corollary \ref{sysc} assures that by means of transformations \eqref{mogi} we can always bring the initial 1-forms $(\om^1,\om^2,\om^3,$ $\om^4,\om^5)$ defining our PDE five variables para-CR structure via \eqref{iniom}, into an equivalent set of 1-forms on a para-CR manifold $M$ satisfying the following EDS:
\be
\begin{aligned}
\der \om^1=&-\om^1\dz\varpi_1+\om^2\dz\om^4\\
\der \om^2=&-\om^1\dz\varpi_3+\om^2\dz(\varpi_2-\tfrac12\varpi_1)+\om^3\dz\om^4\\
\der \om^3=&-\om^2\dz\varpi_3+2\om^3\dz\varpi_2+V\,\,\om^1\dz\om^3+I^1\,\om^1\dz\om^4+I^3\,\om^2\dz\om^3\\
\der \om^4=&-\om^1\dz\varpi_4-\om^4\dz(\varpi_2+\tfrac12\varpi_1)-\om^2\dz\om^5\\
\der \om^5=&\,\om^4\dz\varpi_4-2\om^5\dz\varpi_2+I^2\,\om^1\dz\om^2+V\,\,\om^1\dz\om^5+I^4\,\om^4\dz\om^5,
\end{aligned}\label{systart}
\ee
with some set of auxiliary forms $(\varpi_1,\varpi_2,\varpi_3,\varpi_4)$ and certain functions $V$, $I^1$, $I^2$, $I^3$ and $I^4$ on $M$. For this it is enough to take a section $\sigma:=\big(f^2=0,\bar{f}{}^2=0,\rho=\tfrac13,\phi=0, u_1=0\big)$ of this bundle over $M$ which is described in Corollary \ref{sysc}, and to take the $\sigma$ pullbacks of the forms $(\theta^1,\theta^2,\theta^3,\theta^4,\theta^5,\Om_1,\Om_2,\Om_3,\Om_4)$ as the forms $(\om^1,\om^2,\om^3,\om^4,\om^5,\varpi_1,\varpi_2,\varpi_3,\varpi_4)$ in the EDS \eqref{systart}. In terms of this pullback
the functions $I^1,I^2,I^3,I^4$ become the respective relative invariants $A,B,C,\tilde{C}$ of the considered para-CR structure.

Due to Corollary \ref{sysc} the system \eqref{systart} can be also interpreted as the structural EDS for any PDE five variables para-CR structure on a \emph{nine-dimensional bundle} ${\mathcal G}^9\to M$ \emph{over the para-CR manifold} $M$, where all the nine 1-forms $(\om^1,\om^2,\om^3,\om^4,$ $\om^5,\varpi_1,\varpi_2,\varpi_3,\varpi_4)$ are linearly independent, and the functions $V,I^1,I^2,I^3,I^4$ are functions on the bundle. For this think about $(\om^1,\om^2,\om^3,$ $\om^4,\om^5,\varpi_1,\varpi_2,\varpi_3,\varpi_4)$ as $(\theta^1,\theta^2,\theta^3,\theta^4,\theta^5,\Om_1,\Om_2,\Om_3,\Om_4)$, and of $V,I^1,I^2,I^3,I^4$ as $Q$, $\tfrac{{\rm e}^{3\phi}}{27\rho^3}A$, $\tfrac{{\rm e}^{-3\phi}}{27\rho^3}B$, $\tfrac{{\rm e}^{-\phi}}{3\rho}C$, $\tfrac{{\rm e}^{\phi}}{3\rho}\tilde{C}$ in \eqref{normuj}, respectively.

In the following we will use the para-CR structural system \eqref{systart} having in mind both of the above interpretations. Essentially every argument we will give can be interpreted either in the first or in the second way. It is a matter of convenience to choose one of them. For example, in this section, we will adopt the following \emph{notation}:

Consider a differentiable function $f$ as a function on bundle ${\mathcal G}^9$, i.e. use the second interpretation. Its differential decomposes onto the \emph{basis} of the 1-forms $(\om^1,\om^2,\om^3,\om^4,$ $\om^5,\varpi_1,\varpi_2,\varpi_3,\varpi_4)$ on ${\mathcal G}^9$ and we \emph{denote} the coeffcients of this decomposition as:
\be
\der f=f_{|1}\om^1+f_{|2}\om^2+f_{|3}\om_3+f_{|4}\om^4+f_{|5}\om^5+(\dots)\varpi_1+(\dots)\varpi_2+(\dots)\varpi_3+(\dots)\varpi_4.\label{notder}\ee
We especially do not assign particular notation to the dotted coefficients, because once the notation for the coeffcients at $\om^\mu$ is set, the dotted coeffcients follow from $\der^2=0$ applied to the system \eqref{systart} and to $f$.

Thus, in the bundle ${\mathcal G}^9$ interpretation, the symbol $f_{|\mu}$ denotes the directional derivative of $f$ in the direction of the vector field $X_\mu$, which constitutes the $\mu$ component of the basis of vector fields $(X_1,X_2,X_3,X_4,X_5,Y^1,Y^2,Y^3,Y^4)$ dual to the coframe $(\om^1,\om^2,\om^3,\om^4\om^5,\Omega_1,\Omega_2,\Omega_3,\Omega_4)$ on ${\mathcal G}^9$.

On the other hand, if we interpret the system \eqref{systart} directly on $M$, the introduced notation \eqref{notder} for $\der f$, does not mean anymore that e.g. $f_{|3}$ is the coframe $\om^3$ derivative $f_3$ of $f$, but that it is a \emph{corrected} coframe $\om^3$ derivative by the terms coming from the dotted coeffcients standing at $\om^3$ parts of $\varpi_i$s. This happens because now the forms $\varpi_i$ are linearly dependent on $\om^\mu$s.

Anyhow, it turns out that when compared with the notation for $\der f$ introduced in \eqref{parm}, the new notation, as in \eqref{notder}, considerably simplifies the formulas we are going to derive in this section.

So now, having both interpretations of the system \eqref{systart} in mind, and knowing that it encodes all the structural information about an arbitrary PDE five variables para-CR structure, we consider it as an \emph{abstract} exterior differential system, and we will \emph{close it}, namely apply the condition $\der^2\equiv 0$, as far as it is needed for our purposes, obtaining in particular information on the derivatives of the structural functions $I^1,I^2,I^3,I^4$ and $V$.

This leads to the following statement.
\begin{proposition}\label{susend}
  The differential consequences of the system \eqref{systart} are:
  \be
\begin{aligned}
\der \om^1=&-\om^1\dz\varpi_1+\om^2\dz\om^4\\
\der \om^2=&-\om^1\dz\varpi_3+\om^2\dz(\varpi_2-\tfrac12\varpi_1)+\om^3\dz\om^4\\
\der \om^3=&-\om^2\dz\varpi_3+2\om^3\dz\varpi_2+\tfrac18(2I^3{}_{|4}+I^3{}_{|52})\om^1\dz\om^3+\\&I^1\,\om^1\dz\om^4+I^3\,\om^2\dz\om^3\\
\der \om^4=&-\om^1\dz\varpi_4-\om^4\dz(\varpi_2+\tfrac12\varpi_1)-\om^2\dz\om^5\\
\der \om^5=&\,\om^4\dz\varpi_4-2\om^5\dz\varpi_2+I^2\,\om^1\dz\om^2+\tfrac18(2I^3{}_{|4}+I^3{}_{|52})\om^1\dz\om^5-\\&\tfrac12I^3{}_{|5}\,\om^4\dz\om^5.
\end{aligned}\label{sysend}
\ee
$$\begin{aligned}
\der I^1=&\,I^1{}_{|1}\om^1+I^1{}_{|2}\om^2+I^1{}_{|3}\om^3+I^1{}_{|4}\om^4-\tfrac32 I^1\varpi_1-3I^1\varpi_2\\
\der I^2=&\,I^2{}_{|1}\om^1+I^2{}_{|2}\om^2+I^2{}_{|4}\om^4+I^2{}_{|5}\om^5-\tfrac32 I^2\varpi_1+3I^2\varpi_2\\
\der I^3=&\,I^3{}_{|1}\om^1+I^3{}_{|2}\om^2+I^3{}_{|3}\om^3+I^3{}_{|4}\om^4+I^3{}_{|5}\om^5-\tfrac12 I^3\varpi_1+ I^3\varpi_2,\end{aligned}$$
 \be
\begin{aligned}
\der \varpi_1=&\,\om^1\dz\varpi_5+\om^2\dz\varpi_4-\om^4\dz\varpi_3\\
\der \varpi_2=&-\tfrac14 I^3\om^1\dz\varpi_3-\tfrac18I^3{}_{|5}\om^1\dz\varpi_4-\tfrac12\om^2\dz\varpi_4-\tfrac12\om^4\dz\varpi_3+\\&\tfrac{1}{16}(I^3{}_{|522}+2I^3{}_{|42}-8I^2{}_{|5})\om^1\dz\om^2+\tfrac{1}{16}(I^3{}_{|523}+2I^3{}_{|43})\om^1\dz\om^3+\\&\tfrac{1}{16}(8I^1{}_{|3}-I^3{}_{|524}-2I^3{}_{|44})\om^1\dz\om^4-\tfrac{1}{16}(I^3{}_{|525}+2I^3{}_{|45})\om^1\dz\om^5+\\&\tfrac{1}{8}(I^3{}_{|52}-2I^3{}_{|4})\om^2\dz\om^4-\tfrac12I^3{}_{|5}\om^2\dz\om^5+I^3\om^3\dz\om^4-\om^3\dz\om^5\\
\der \varpi_3=&\,\varpi_3\dz(\tfrac12\varpi_1+\varpi_2)+\tfrac18(2I^3{}_{|4}+I^3{}_{|52})\om^1\dz\varpi_3+\tfrac14 I^3\om^2\dz\varpi_3+\\&\tfrac18I^3{}_{|5}\om^2\dz\varpi_4+\tfrac12\om^2\dz\varpi_5+\om^3\dz\varpi_4+J^1\om^1\dz\om^2+\\&\tfrac{1}{4}(4I^2{}_{|5}+4I^3{}_{|1}-2I^3{}_{|42}-I^3{}_{|522})\om^1\dz\om^3+(I^1I^3-I^1{}_{|2})\om^1\dz\om^4+\\&I^1\om^1\dz\om^5-\tfrac{1}{16}(2I^3{}_{|43}+I^3{}_{|523})\om^2\dz\om^3+\tfrac{1}{16}(I^3{}_{|524}-8I^1{}_{|3}+2I^3{}_{|44})\om^2\dz\om^4+\\&\tfrac{1}{16}(2I^3{}_{|45}+I^3{}_{|525})\om^2\dz\om^5-\tfrac18(2I^3{}_{|4}+I^3{}_{|52})\om^3\dz\om^4\\
\der \varpi_4=&\,\varpi_4\dz(\tfrac12\varpi_1-\varpi_2)+\tfrac18(2I^3{}_{|4}+I^3{}_{|52})\om^1\dz\varpi_4-\tfrac14 I^3\om^4\dz\varpi_3-\\&\tfrac18I^3{}_{|5}\om^4\dz\varpi_4+\tfrac12\om^4\dz\varpi_5+\om^5\dz\varpi_3+\\&\tfrac{1}{128}\Big(16(I^3{}_{|14}-I^1I^3{}_{|3})+8(I^3{}_{|521}-I^1{}_{|3}I^3)+2I^3I^3{}_{|44}+I^3I^3{}_{|524}\Big)\om^1\dz\om^4+\\&\tfrac{1}{2}(2I^2{}_{|4}+I^2I^3{}_{|5})\om^1\dz\om^2-I^2\om^1\dz\om^3+\tfrac{1}{16}(8I^2{}_{|5}-2I^3{}_{|42}-I^3{}_{|522})\om^2\dz\om^4\\&+\tfrac{1}{4}(I^3{}_{|524}-4I^1{}_{|3}+2I^3{}_{|44}+2I^3{}_{|51})\om^1\dz\om^5+\tfrac{1}{8}(2I^3{}_{|4}+I^3{}_{|52})\om^2\dz\om^5-\\&\tfrac{1}{16}(2I^3{}_{|43}+I^3{}_{|523})\om^3\dz\om^4-\tfrac{1}{16}(2I^3{}_{|45}+I^3{}_{|525})\om^4\dz\om^5\\
\der \varpi_5=&\,\varpi_5\dz\varpi_1+2\varpi_4\dz\varpi_3+J^2\om^1\dz\varpi_3+J^3\om^1\dz\varpi_4+\tfrac14(2I^3{}_{|4}+I^3{}_{|52})\om^1\dz\varpi_5+\\&\tfrac18(2I^3{}_{|4}+I^3{}_{|52})\om^2\dz\varpi_4-\tfrac18(2I^3{}_{|4}+I^3{}_{|52})\om^4\dz\varpi_4+J^4\om^1\dz\om^2+J^5\om^1\dz\om^3+\\&J^6 \om^1\dz\om^4+J^7\om^1\dz\om^5-I^2\om^2\dz\om^3+J^8\om^2\dz\om^4+\\&\tfrac{1}{4}(I^3{}_{|524}-4I^1{}_{|3}+2I^3{}_{|44}+2I^3{}_{|51})\om^2\dz\om^5+\\&\tfrac{1}{4}(4I^2{}_{|5}+4I^3{}_{|1}-2I^3{}_{|42}-I^3{}_{|522})\om^3\dz\om^4-I^1\om^4\dz\om^5.
\end{aligned}
\label{sysend1}
\ee
$$\begin{aligned}\der I^3{}_{|2}=&\,\tfrac{1}{16}\Big(16(I^3{}_{|12}-I^2I^3{}_{|5})+I^3(8I^2{}_{|5}-2I^3{}_{|42}-I^3{}_{|522})\Big)\om^1+I^3{}_{|22}\om^2+I^3{}_{|23}\om^3+\\&
\tfrac18\Big(8(I^3{}_{|42}+I^3{}_{|1})+I^3(I^3{}_{|52}-2I^3{}_{|4})\Big)\om^4+\tfrac12\Big(2(I^3{}_{|52}-I^3{}_{|4})-I^3I^3{}_{|5}\Big)\om^5-\\&
I^3{}_{|2}\varpi_1+2I^3{}_{|2}\varpi_1-I^3{}_{|3}\varpi_3-I^3\varpi_4\\
\der I^3{}_{|3}=&\,\tfrac{1}{16}\Big(16I^3{}_{|13}-2I^3{}_{|3}(2I^3{}_{|4}+I^3{}_{|52})-I^3(I^3{}_{|523}+2I^3{}_{|43})\Big)\om^1+(I^3{}_{|23}-I^3I^3{}_{|3})\om^2+\\&I^3{}_{|33}\om^3+
\tfrac12\Big(I^3{}_{|523}+2I^3{}_{|43}-2(I^3{}_{|2}+(I^3)^2)\Big)\om^4+3I^3\om^5-
\tfrac12I^3{}_{|3}\varpi_1+3I^3{}_{|3}\varpi_2\\
\der I^3{}_{|5}=&\,I^3{}_{|51}\om^1+I^3{}_{|52}\om^2+4I^3\om^3+I^3{}_{|54}\om^4+I^3{}_{|55}\om^5-\tfrac12I^3{}_{|5}\varpi_1-I^3{}_{|5}\varpi_2\\
\der I^3{}_{|52}=&\,I^3{}_{|521}\om^1+I^3{}_{|522}\om^2+4\big((I^3)^2+I^3{}_{|2}\big)\om^3+I^3{}_{|524}\om^4+\\&\big(2I^3{}_{|45}+(I^3{}_{|5})^2+I^3{}_{|525}-2I^3{}_{|54}\big)\om^5-I^3{}_{|52}\varpi_1-4I^3\varpi_3.
\end{aligned}
$$
The coefficients $J^1,J^2,\dots,J^8$ are not important here.
  \end{proposition}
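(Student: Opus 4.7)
The strategy is to apply $d \circ d = 0$ systematically to each of the five structure equations in \eqref{systart}, treated as an abstract EDS on the bundle $\mathcal{G}^9$. Each identity $d\,d\omega^i = 0$ decomposes, wedge-product by wedge-product, into two types of consequences: those determining a piece of some $d\varpi_k$ (introducing at most one new auxiliary form $\varpi_5$), and those producing algebraic relations among $V$, the $I^j$, and their coframe derivatives $I^j{}_{|\alpha}$. Because the structure equations already express each $d\omega^i$ in terms of $\omega^j \wedge \omega^k$, $\omega^j \wedge \varpi_k$, the unknown $d\varpi_k$ enter only through $dd\omega^i$, so the Bianchi system is triangular and can be processed in a definite order.

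I would proceed as follows. First, $d\,d\omega^1 = 0$ forces $d\varpi_1 = \omega^1 \wedge \varpi_5 + \omega^2 \wedge \varpi_4 - \omega^4 \wedge \varpi_3$, introducing the single new form $\varpi_5$. Next, treating the pair $d\,d\omega^2 = 0$ and $d\,d\omega^4 = 0$ in parallel (they are swapped by the barred/unbarred symmetry visible in \eqref{systart}), I obtain $d\varpi_3$ and $d\varpi_4$ up to their $\omega^1$-coefficients. The two most consequential identities come from $d\,d\omega^3 = 0$ and $d\,d\omega^5 = 0$: matching the coefficient of $\omega^1 \wedge \omega^3 \wedge \omega^4$ in the first forces $V = \tfrac18 (2 I^3{}_{|4} + I^3{}_{|52})$, and the analogous matching in the second forces $I^4 = -\tfrac12 I^3{}_{|5}$, explaining how the proposition replaces the ostensibly free torsions $V$ and $I^4$ of \eqref{systart} by derivatives of $I^3$ in \eqref{sysend}. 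The $\varpi_1, \varpi_2$-components of $dI^1, dI^2, dI^3$ are then read off, and they match the weights predicted by the $(\rho, \phi)$-equivariance of $I^1 \sim \rho^{-3}e^{3\phi}A$, $I^2 \sim \rho^{-3}e^{-3\phi}B$, $I^3 \sim \rho^{-1} e^{-\phi} C$ inherited from Theorem \ref{the21}, namely $(-\tfrac32, -3)$, $(-\tfrac32, 3)$, $(-\tfrac12, 1)$ respectively. Finally, $d\varpi_2$ is extracted from the remaining content of $d\,d\omega^2 = 0$ and $d\,d\omega^4 = 0$, $d\varpi_5$ is obtained from $d\,d\varpi_1 = 0$, and the higher derivatives $dI^3{}_{|2}, dI^3{}_{|3}, dI^3{}_{|5}, dI^3{}_{|52}$ are fixed in turn by applying $d^2 = 0$ to the already-computed $dI^3$ and to $dI^3{}_{|5}$ itself.

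The main obstacle is the sheer volume of bookkeeping. The $\omega^1$-components of $d\varpi_2, d\varpi_3, d\varpi_4$ involve third-order and even fourth-order coframe derivatives of $I^3$ such as $I^3{}_{|522}, I^3{}_{|523}, I^3{}_{|524}, I^3{}_{|525}$, each appearing in several places at once, and the coefficients $J^1, \ldots, J^8$ in $d\varpi_5$ absorb the most complicated residue. My plan is to organize the computation stage by stage, verifying the keystone relations $V = \tfrac18(2 I^3{}_{|4} + I^3{}_{|52})$, $I^4 = -\tfrac12 I^3{}_{|5}$, and the three weight formulas by hand, and delegating the remaining algebra to a computer algebra system. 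Since only the $(\omega^i, \varpi_k)$-basis coefficients matter for the subsequent Cartan reduction---and the $J^k$'s never will---the stated form of the proposition isolates exactly the data needed for the branching analysis that follows.
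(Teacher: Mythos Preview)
Your approach is correct and matches the paper's own treatment: the paper simply announces that the proposition is obtained by ``closing'' the abstract EDS \eqref{systart}, i.e.\ by imposing $\der^2\equiv 0$, and states the result without a detailed proof. Your outline---extracting $\der\varpi_1$ from $\der\der\om^1=0$, then $\der\varpi_3,\der\varpi_4$ from $\der\der\om^2=\der\der\om^4=0$, then the key identifications $V=\tfrac18(2I^3{}_{|4}+I^3{}_{|52})$ and $I^4=-\tfrac12 I^3{}_{|5}$ from $\der\der\om^3=\der\der\om^5=0$, and finally the weights of $\der I^1,\der I^2,\der I^3$ from the $(\rho,\phi)$-equivariance in Theorem~\ref{the21}---is exactly the natural execution of that strategy, and your plan to verify the keystone relations by hand while delegating the higher-order bookkeeping to a computer algebra system is appropriate given the volume of terms involved.
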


We now use this proposition interpreting forms $(\om^1,\om^2,\om^3,\om^4,\om^5)$ as defining a specially adapted coframe of an arbitrary PDE five variables para-CR structure, and use it to build the lifted coframe \eqref{mogi} which satisfies equations of the form \eqref{norme1}. This in turn, by the same procedure which we used to get Theorem \ref{the21}, leads to the reinterpretation of this Theorem and Corollary \ref{sysc} into the following form:
\begin{corollary}\label{nurhuj}
  The torsion normalizations equations \eqref{norme} applied to the forms \eqref{mogi} with $(\om^1,\om^2,\om^3,\om^4,\om^5)$ satisfying \eqref{sysend} yield the following para-CR invariant differential system
 \be
\begin{aligned}
\der \theta^1=&-\theta^1\dz\Om_1+\theta^2\dz\theta^4\\
\der \theta^2=&\theta^2\dz(\Om_2-\tfrac12\Om_1)-\theta^1\dz\Om_3+\theta^3\dz\theta^4\\
\der \theta^3=&2\theta^3\dz\Om_2-\theta^2\dz\Om_3+\tfrac{{\rm e}^{3\phi}}{\rho^3}I^1\,\theta^1\dz\theta^4+\tfrac{{\rm e}^{-\phi}}{\rho}I^3\,\theta^2\dz\theta^3+\\&\tfrac{1}{8\rho^3}\,\Big(2\mathrm{e}^\phi\bar{f}{}^2I^3{}_{|5}+\rho(I^3{}_{|52}+2I^3{}_{|4})-4\mathrm{e}^{-\phi}f^2I^3\Big)\,\theta^1\dz\theta^3\\
\der \theta^4=&-\theta^2\dz\theta^5-\theta^4\dz(\tfrac12\Om_1+\Om_2)-\theta^1\dz\Om_4\\
\der \theta^5=&-2\theta^5\dz\Om_2+\theta^4\dz\Om_4+\tfrac{{\rm e}^{-3\phi}}{\rho^3}I^2\,\theta^1\dz\theta^2-\tfrac{{\rm e}^{\phi}}{2\rho}I^3{}_{|5}\,\theta^4\dz\theta^5+\\&\tfrac{1}{8\rho^3}\Big(2\mathrm{e}^\phi\bar{f}{}^2I^3{}_{|5}+\rho(I^3{}_{|52}+2I^3{}_{|4})-4\mathrm{e}^{-\phi}f^2I^3\Big)\,\theta^1\dz\theta^5.
\end{aligned}
\label{norhuj}\ee
\end{corollary}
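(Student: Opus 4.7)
The plan is to re-run the proof of Theorem \ref{the21}, but starting from the abstract coframe $(\om^1,\ldots,\om^5)$ of Proposition \ref{susend} on $M$ instead of the concrete one built from \eqref{iniom}. Since Proposition \ref{susend} already encodes all the structure equations on $M$ after the first round of torsion normalization, the concrete relative invariants $A,B,C,\tilde C$ from Theorem \ref{the21} are simply replaced by the corresponding functions $I^1,I^2,I^3$ (together with the invariant derivatives $I^3{}_{|4}, I^3{}_{|5}, I^3{}_{|52}$). Accordingly, I would lift to $\mathcal{G}^9 = M\times G^4$ via the same matrix $S$ of \eqref{matris} and introduce $\Om_1,\Om_2,\Om_3,\Om_4$ by exactly the same differential expressions (involving $d\rho,d\phi,df^2,d\bar f^2$ and the auxiliary variable $u_1$) as in Theorem \ref{the21}; no fresh normalizations are required.

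Once these definitions are in place, I would compute $d\theta^i = dS^i{}_j\dz\om^j + S^i{}_j\, d\om^j$ by Leibniz's rule and substitute the values of $d\om^j$ read off from \eqref{sysend}. The terms carrying $d\rho,d\phi,df^2,d\bar f^2$ (with $u_1$-correction on the $\theta^1$-slot) reassemble into the four connection combinations $\theta^i\dz\Om_k$ exactly as they did in Theorem \ref{the21}. The five torsion-normalization equations \eqref{norme} then fix the remaining coefficients uniquely. The key simplification compared with Theorem \ref{the21} is that the coefficients which were forced to appear in \eqref{normuj} are already encoded in the $I^j$-invariants on the right-hand side of \eqref{sysend}; after grouping, each surviving coefficient in $d\theta^i$ is manifestly a rational function of $\rho,\mathrm{e}^\phi,f^2,\bar f^2$ times $I^1,I^2,I^3$ or an invariant derivative thereof, which yields the explicit expressions $\tfrac{\mathrm{e}^{3\phi}}{\rho^3}I^1$, $\tfrac{\mathrm{e}^{-3\phi}}{\rho^3}I^2$, $\tfrac{\mathrm{e}^{-\phi}}{\rho}I^3$ and $-\tfrac{\mathrm{e}^\phi}{2\rho}I^3{}_{|5}$.

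The only bookkeeping point that requires care, and hence the expected main obstacle, is the coefficient at $\theta^1\dz\theta^3$ in $d\theta^3$, which must coincide with the coefficient at $\theta^1\dz\theta^5$ in $d\theta^5$ (as implicitly required by the normalization $t^3{}_{13}=t^5{}_{13}$ in \eqref{norme}). Three separate contributions must be combined: the base term $\tfrac18(2I^3{}_{|4}+I^3{}_{|52})$ present in both $d\om^3$ and $d\om^5$ in \eqref{sysend}; the term produced when the $f^2$-entry of the lifting matrix acts on the $I^3\,\om^2\dz\om^3$ part of $d\om^3$; and the symmetric term produced when the $\bar f^2$-entry acts on the $-\tfrac12 I^3{}_{|5}\,\om^4\dz\om^5$ part of $d\om^5$. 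After dividing by the appropriate powers of $\rho$ and $\mathrm{e}^\phi$ from $S$, these three contributions coalesce into the single expression $\tfrac{1}{8\rho^3}\bigl(2\mathrm{e}^\phi\bar{f}{}^2 I^3{}_{|5}+\rho(I^3{}_{|52}+2I^3{}_{|4})-4\mathrm{e}^{-\phi}f^2 I^3\bigr)$ stated in \eqref{norhuj}. The fact that the \emph{same} expression emerges in $d\theta^3$ and in $d\theta^5$ is not an extra hypothesis: it is forced by the structural identities for $dI^3,dI^3{}_{|5},dI^3{}_{|52}$ already derived in Proposition \ref{susend} via $d^2\equiv 0$, so no further closure argument is needed.
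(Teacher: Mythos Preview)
Your proposal is correct and takes essentially the same approach as the paper: the paper's proof simply declares that this corollary is a reformulation of Theorem~\ref{the21} in the new notation and points out that the same matrix $S$ of \eqref{matris} carries the base system \eqref{sysend} to \eqref{norhuj}. Your write-up is a faithful (and more detailed) expansion of exactly that computation.

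One small caveat on your bookkeeping for the $\theta^1\wedge\theta^3$ coefficient: the way you phrase it suggests that the $\bar f^2$-contribution to this coefficient in $d\theta^3$ arises from the $-\tfrac12 I^3{}_{|5}\,\om^4\wedge\om^5$ term of $d\om^5$, but $\theta^3$ is built only from $\om^1,\om^2,\om^3$. In reality the $\bar f^2$-dependence enters through the normalized form $\Om_2$ (which contains a $\bar f^2$ term already in the proof of Theorem~\ref{the21}), and it is the subtraction of $2\theta^3\wedge\Om_2$ that produces the $\bar f^2 I^3{}_{|5}$ piece in the residual torsion; the analogous mechanism on the $d\theta^5$ side yields the $f^2 I^3$ piece. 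The final expression and the equality of the two coefficients are, as you say, guaranteed by the structure equations of Proposition~\ref{susend}, so this is only a matter of attribution, not of correctness.
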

\begin{proof}
  Since this Corollary is just a reformulation, in the new notation, of Theorem \ref{the21} we only give the matrix $S=(S^\mu{}_\nu)$ which, via $\theta^\mu=S^\mu{}_\nu\om^\nu$, brings the system \eqref{sysend} to the system \eqref{norhuj}. It is not a surprise that this matrix is precisely given by the formula \eqref{matris}.
\end{proof}
This shows that in the notation of this section the simplest \emph{relative invariants} of the considered para-CR structures are $I^1,I^2,I^3$ and $I^3{}_{|5}$. In particular the structures with the structural function $I^3\neq 0$ and $I^3=0$ are locally para-CR nonequivalent.
\subsection{The case $I^3\neq 0$ and corresponding homogeneous models}
If the relative invariant $I^3\neq 0$ we can normalize the term at $\theta^2\dz\theta^3$ in $\der\theta^3$ to 1,
$$\frac{\mathrm{e}^{-\phi}I^3}{\rho}=1,$$
reducing the system \eqref{norhuj} by one dimension, due to the choice of the section
$$\rho=\mathrm{e}^{-\phi}I^3.$$
After this normalization the form $\Om_2$ becomes dependent on the forms $\Om_1$, $\theta^\mu$, and thus it disappears from the equations \eqref{norhuj}. Actually, the entire combination $\Om_2-\tfrac12\Om_1$ disappears from these equations. This in particular gives
$$
  (\der \theta^2)\dz \theta^3\dz\theta^4\dz\theta^5=\Big(-\theta^1\dz\Om_3+\frac{K}{8(I^3)^5}\theta^1\dz\theta^2\Big)\dz\theta^3\dz\theta^4\dz\theta^5,
$$
with the coefficient $K=4(I^3)^5u_1+L$. Here $u_1$ is an auxiliary variable introduced when normalizing the system \eqref{norhuj}. It is analogous to $u_1$ introduced in \eqref{u1}. The quantity $L$ depends on the structural function $I^3$, its derivatives, and the free fiber coordinates $\phi$, $f^2$ and $\bar{f}{}^2$, only. The explicit linear $u_1$-dependence of $K$, where the $u_1$ term is multiplied by a fifth power of $I^3$, which is assumed not to vanish, enables us to normalize the coefficient at $\theta^1\dz\theta^2$ in $\der\theta^2$ to 0,
$$K=0.$$
This eliminates the auxiliary variable $u_1$ from the system.

After this normalization we get in paricular that:
$$\begin{aligned}\der \theta^4=&-\theta^2\dz\theta^5-\theta^4\dz\Om_1-\theta^1\dz\Om_4+\frac{\mathrm{e}^{2\phi}\bar{f}{}^2I^3+f^2I^3{}_{|3}-I^3I^3{}_{|2}}{(I^3)^3}\theta^2\dz\theta^4-\\&
\frac{\mathrm{e}^{-2\phi}I^3{}_{|3}}{I^3}\theta^3\dz\theta^4+\frac{\mathrm{e}^{2\phi}I^3{}_{|5}}{I^3}\theta^4\dz\theta^5.
\end{aligned}$$
This enables for further reduction, by forcing the coefficient of $\theta^2\dz\theta^4$ to vanish. This results in the restriction of the system \eqref{norhuj} to a section
$$\bar{f}^2=\frac{I^3I^3{}_{|2}-f^2I^3{}_{|3}}{\mathrm{e}^{2\phi}I^3},$$
on which the form $\Om_4$ becomes dependent on $\Om_3$ and $\theta^\mu$s. Thus, it is not present in the reduced system in which, in particular the differential of $\theta^2$ reads:
$$\begin{aligned}\der \theta^2=&\,\theta^3\dz\theta^4-\theta^1\dz\Om_3+\mathrm{e}^{2\phi}\big(\dots\big)\theta^2\dz\theta^4-
\frac{\mathrm{e}^{-2\phi}I^3{}_{|3}}{I^3}\theta^2\dz\theta^3-\frac{\mathrm{e}^{2\phi}I^3{}_{|5}}{I^3}\theta^2\dz\theta^5.
\end{aligned}$$
This shows that when $I^3\neq 0$, which we assume in this section, the structural functions $I^3{}_{|3}$ and $I^3{}_{|5}$ are \emph{relative invariants} of such para-CR structures. In particular, if we have two para-CR structures, one with $I^3{}_{|5}\neq 0$ and the other with $I^3{}_{|5}=0$, then they are locally para-CR nonequivalent.

Let us first concentrate on the case when
$$I^3{}_{|5}\neq 0.$$
In this case we can normalize the term at $\theta^2\dz\theta^5$ in $\der\theta^2$ to be equal to $-\epsilon$, where $$\epsilon=\sgn(\tfrac{I^3{}_{|5}}{I^3}).$$
This results in further reduction of the system \eqref{norhuj} to the section on which
$$\phi=\tfrac12\log(\tfrac{\epsilon I^3}{I^3{}_{|5}}).$$
This makes $\Om_1$ dependent on $\theta^\mu$s only, and eliminates $\Om_1$ from the variables in the reduced EDS. 
Then in particular, the differential of $\theta^1$ satisfies
$$
  (\der \theta^1)\dz \theta^3\dz\theta^4\dz\theta^5=\frac{4f^2-I^3{}_{|52}}{I^3I^3{}_{|5}}\theta^1\dz\theta^2\dz\theta^3\dz\theta^4\dz\theta^5.
$$
This enables for the ultimate normalization which kills the $\theta^1\dz\theta^2$ term in $\der\theta^1$. It is obtained by taking the section
$$f^2=\tfrac14 I^3{}_{|52}.$$
After this normalization the system \eqref{norhuj} reduces to the original five manifold $M$ on which the para-CR structure is defined. It brings the initial forms $\omega^\mu$ satisfying the system \eqref{sysend} to the fully para-CR invariant forms $\theta^\mu$ on $M$, via the formula $\theta^\mu=S^\mu{}_{\nu}\om^\nu$, where the matrix $S=(S^\mu{}_{\nu})$ is given by:
$$S={\tiny \bma
\epsilon I^3I^3{}_{|5}&0&0&0&0\\
\tfrac14 I^3{}_{|52}&I^3&0&0&0\\
\tfrac{\eps \big(I^3{}_{|52}\big)^2}{32 I^3 I^3{}_{|5}}&\tfrac{\eps I^3{}_{|352}}{4 I^3{}_{|5}}&\tfrac{\eps I^3}{I^3{}_{|5}}&0&0\\
\tfrac{I^3{}_{|5}}{4\epsilon \big(I^3\big)^2}\big(4I^3I^3{}_{|2}-I^3{}_{|3}I^3{}_{|52}\big)&0&0&\epsilon I^3{}_{|5}&0\\
-\tfrac{I^3{}_{|5}}{32\epsilon \big(I^3\big)^5}\big(4I^3I^3{}_{|2}-I^3{}_{|3}I^3{}_{|52}\big)^2&0&0&-\tfrac{I^3{}_{|5}}{4\epsilon \big(I^3\big)^3}\big(4I^3I^3{}_{|2}-I^3{}_{|3}I^3{}_{|52}\big)&\epsilon\tfrac{I^3{}_{|5}}{I^3}
\ema}.$$
The resulting para-CR invariant EDS on $M$ is presented in the following statement.
\begin{theorem}
  Every PDE five variables para-CR structure on a 5-dimensional manifold $M$ with the relative invariants $I^3\neq0$ and $I^3{}_{|5}\neq 0$ uniquely defines five 1-forms $(\theta^1,\theta^2,\theta^3,\theta^4,\theta^5)$ on $M$ which satisfy the following exterior differential system
  \be
  \begin{aligned}
  \der\theta^1=&\,\epsilon\Big(-c_3\theta^1\dz\theta^3+c_5\theta^1\dz\theta^4-c_4\theta^1\dz\theta^5\Big)+\theta^2\dz\theta^4\\
  \der\theta^2=&\,\epsilon\Big(c_8\theta^1\dz\theta^2+(4-c_3)\theta^2\dz\theta^3-c_9\theta^2\dz\theta^4-\theta^2\dz\theta^5\Big)-\theta^1\dz\theta^3+\\&c_7\theta^1\dz\theta^4-c_6\theta^1\dz\theta^5+\theta^3\dz\theta^4\\
  \der\theta^3=&\,\epsilon\Big(-c_{11}\theta^1\dz\theta^3+c_1\theta^1\dz\theta^4-(c_5+2c_9)\theta^3\dz\theta^4+(c_4-2)\theta^3\dz\theta^5\Big)+\\&c_{10}\theta^1\dz\theta^2+c_7\theta^2\dz\theta^4-c_6\theta^2\dz\theta^5\\
  \der\theta^4=&\,\epsilon\Big(c_{12}\theta^1\dz\theta^3+c_{14}\theta^1\dz\theta^4+\tfrac12(1-8c_6+2c_3c_6+2c_9)\theta^1\dz\theta^5+\\&4\theta^3\dz\theta^4+(1-c_4)\theta^4\dz\theta^5\Big)-c_{13}\theta^1\dz\theta^2-\theta^2\dz\theta^5\\
  \der\theta^5=&\,\epsilon\Big(c_{15}\theta^1\dz\theta^4+\tfrac12(c_9+2c_{11})\theta^1\dz\theta^5+c_{12}\theta^3\dz\theta^4+(8-c_3)\theta^3\dz\theta^5-\\&(1+c_5-4c_6+c_3c_6+3c_9)\theta^4\dz\theta^5\Big)+c_2\theta^1\dz\theta^2-c_{13}\theta^2\dz\theta^4.
  \end{aligned}\label{homo1}
  \ee
  Here, the coefficients $c_1,c_2,\dots,c_{15}$ are totally expressed in terms of the structural functions $I^1,I^2,I^3$ and their derivatives. They are uniquely defined by the para-CR structure on $M$ (but their explicit forms are not relevant here) .

  Two different PDE five variables para-CR structures with their corresponding relative invariants $I^3\neq0$ and $I^3{}_{|5}\neq 0$ are locally para-CR equivalent if and only if their corresponding 1-forms $\theta^\mu$ and $\bar{\theta}{}^\mu$  can be transformed to each other by a local diffeomorphism $\Phi$, i.e. if and only if $\Phi^*(\bar{\theta}{}^\mu)=\theta^\mu$ for all $\mu=1,2,3,4,5$.
  \end{theorem}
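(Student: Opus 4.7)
The plan is to follow the successive reductions already set up before the statement, producing at each step a section of the fiber bundle that eliminates one vertical variable and one $\Omega$-form. Starting from the EDS of Corollary~\ref{nurhuj}, I would first use $I^3\neq0$ to normalize the coefficient of $\theta^2\wedge\theta^3$ in $d\theta^3$ to $1$ by solving $\rho=\mathrm{e}^{-\phi}I^3$; the form $\Omega_2-\tfrac12\Omega_1$ then drops out of the reduced system. Next, expanding $(d\theta^2)\wedge\theta^3\wedge\theta^4\wedge\theta^5$ on the reduced bundle produces a coefficient of $\theta^1\wedge\theta^2$ of the form $K = 4(I^3)^5 u_1 + L$ that depends linearly on the auxiliary parameter $u_1$ with non-vanishing leading coefficient (since $I^3\neq 0$), so setting $K=0$ eliminates $u_1$.

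Continuing the reduction, the differential of $\theta^4$ acquires a term $\frac{\mathrm{e}^{2\phi}\bar f^2 I^3+f^2 I^3{}_{|3}-I^3 I^3{}_{|2}}{(I^3)^3}\,\theta^2\wedge\theta^4$; normalizing its coefficient to zero pins down $\bar f^2$ and makes $\Omega_4$ dependent on $\Omega_3$ and the $\theta^\mu$. At this stage the hypothesis $I^3{}_{|5}\neq 0$ activates: the coefficient of $\theta^2\wedge\theta^5$ in $d\theta^2$ becomes $-\mathrm{e}^{2\phi}I^3{}_{|5}/I^3$, which can be normalized to $-\epsilon$ with $\epsilon=\mathrm{sgn}(I^3{}_{|5}/I^3)$ by choosing $\phi=\tfrac12\log(\epsilon I^3/I^3{}_{|5})$; this eliminates $\Omega_1$. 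A final normalization $f^2=\tfrac14 I^3{}_{|52}$ kills the $\theta^1\wedge\theta^2$ component of $d\theta^1$ and eliminates $\Omega_3$, reducing the bundle all the way down to $M$ itself. Composing all these sections yields the explicit matrix $S=(S^\mu{}_\nu)$ displayed in the theorem, and $\theta^\mu:=S^\mu{}_\nu\omega^\nu$ is therefore uniquely determined by the para-CR structure.

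To derive the structure equations \eqref{homo1}, I would substitute the normalized values of $\rho,\phi,f^2,\bar f^2,u_1$ into the equations of Corollary~\ref{nurhuj} (and the full Proposition~\ref{susend} for the derivatives of $I^3$ and the $\varpi_i$), using $d^2\equiv 0$ to reorganize the resulting torsions. Each coefficient $c_j$ arises as a polynomial expression in $I^1,I^2,I^3$ and iterated directional derivatives $I^3{}_{|i},\,I^3{}_{|ij},\ldots$ along the dual frame $X_i$; the symbols $\epsilon$ appearing in \eqref{homo1} record the sign-choice made when extracting $\phi$. For the invariance/uniqueness statement, any local para-CR equivalence $\Phi\colon M\to \bar M$ lifts to an equivariant map between the corresponding $\mathcal{G}^9$-bundles that must preserve the invariantly characterized reductions at each step (since the normalizing equations are canonically determined by $I^3,I^3{}_{|5}$, the non-vanishing of which is preserved by $\Phi$); hence $\Phi^*\bar\theta^\mu=\theta^\mu$. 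Conversely, if $\Phi^*\bar\theta^\mu=\theta^\mu$, then $\Phi$ intertwines the equivalence classes of the initial para-CR coframes $[(\omega^\mu)]$ and $[(\bar\omega^\mu)]$ by construction of the matrix $S$, which is invertible on the open set where $I^3$ and $I^3{}_{|5}$ do not vanish.

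The routine part is the algebraic elimination and the bookkeeping of how each normalization interacts with the existing torsions. The main obstacle, and the only step where care is genuinely needed, is verifying that after the $u_1$-normalization $K=0$ and the subsequent $\bar f^2$-, $\phi$-, $f^2$-normalizations, the compatibility constraints from $d^2\equiv 0$ applied to the reduced system really do force the structure equations into the restricted shape \eqref{homo1} with the stated symmetric appearance of certain coefficients (such as $c_{12}$ appearing in both $d\theta^4$ and $d\theta^5$, or the shared $c_{13}$, $c_7$, $c_6$). This is essentially a consistency check that no further relative invariant appears at this stage beyond those already encoded in $I^1,I^2,I^3$ and their coframe derivatives, and it is best carried out by a symbolic computation system using the closure identities listed in Proposition~\ref{susend}.
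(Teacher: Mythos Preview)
Your proposal is correct and follows essentially the same approach as the paper: the paper's proof consists precisely of the sequence of normalizations you describe (setting $\rho=\mathrm{e}^{-\phi}I^3$, then $K=0$ to fix $u_1$, then $\bar f^2$, then $\phi$, then $f^2=\tfrac14 I^3{}_{|52}$), carried out in the same order and with the same formulas, together with the observation that each step is forced by the non-vanishing hypotheses so that the resulting $\{e\}$-structure on $M$ is canonical.
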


This theorem, with the reasoning preceding it, assures that the only possible homogeneous PDE five variables para-CR structures with $I^3\neq 0$ and $I^3{}_{|5}\neq 0$ are those that correspond to the forms $(\theta^1,\theta^2,\dots,\theta^5)$ satisfying the system \eqref{homo1} with \emph{all} coefficients $c_1,c_2,\dots,c_{15}$ being \emph{constants}. If such structures exist, these constants must satisfy the system \eqref{homo1} and its differential consequences $\der(\der(\theta^\mu))=0$ for all $\mu=1,2,3,4,5$. This is a very strong condition which \emph{have only one solution}, given by:
$$\begin{aligned}&c_1=c_2=c_{10}=c_{12}=c_{13}=c_{15}=0,\,\, c_3=6,\,\,c_4=\tfrac32,\,\,c_5=\tfrac12,\,\,c_6=\tfrac18,\,\,c_7=\tfrac{1}{32},\\&c_8=-\tfrac{1}{16},\,\,c_9=-\tfrac12,\,\,c_{11}=\tfrac{3}{16},\,\,c_{14}=-\tfrac18.\end{aligned}$$

It is easy to see that the case when $I^3\neq 0$ and $$I^3{}_{|5}=0$$ in an open set is \emph{impossible}. Indeed, using the EDS from Proposition \ref{susend} and the condition that $I^3{}_{|5}=0$ we see that
$$0=\der(\der(\theta^5))\dz\om^1\dz\om^2=2I^3\om^1\dz\om^2\dz\om^3\dz\om^4\dz\om^5,$$
i.e. that in particular $I^3=0$ in the open set, 
which is a \emph{contradiction}.

Summarizing we have the following 
\begin{corollary}\label{co1}
The only possible two homogeneous models of a PDE five variables para-CR structure with $I^3\neq 0$ must be described by invariant forms $(\theta^1,\theta^2,\theta^3,\theta^4,\theta^5)$ satisfying
 \be
  \begin{aligned}
  \der\theta^1=&\,\epsilon\Big(-6\theta^1\dz\theta^3+\tfrac12\theta^1\dz\theta^4-\tfrac32\theta^1\dz\theta^5\Big)+\theta^2\dz\theta^4\\
  \der\theta^2=&\,\epsilon\Big(-\tfrac{1}{16}\theta^1\dz\theta^2-2\theta^2\dz\theta^3+\tfrac12\theta^2\dz\theta^4-\theta^2\dz\theta^5\Big)-\theta^1\dz\theta^3+\\&\tfrac{1}{32}\theta^1\dz\theta^4-\tfrac18\theta^1\dz\theta^5+\theta^3\dz\theta^4\\
  \der\theta^3=&\,\epsilon\Big(-\tfrac{3}{16}\theta^1\dz\theta^3+\tfrac12\theta^3\dz\theta^4-\tfrac12\theta^3\dz\theta^5\Big)+\tfrac{1}{32}\theta^2\dz\theta^4-\tfrac18\theta^2\dz\theta^5\\
  \der\theta^4=&\,\epsilon\Big(-\tfrac18\theta^1\dz\theta^4+\tfrac14\theta^1\dz\theta^5+4\theta^3\dz\theta^4-\tfrac12\theta^4\dz\theta^5\Big)-\theta^2\dz\theta^5\\
  \der\theta^5=&\,\epsilon\Big(-\tfrac{1}{16}\theta^1\dz\theta^5+2\theta^3\dz\theta^5-\tfrac14\theta^4\dz\theta^5\Big).
  \end{aligned}\label{homo1expl}
  \ee
  Here $\epsilon=\pm1$, and the structures with different values of $\epsilon$ are para-CR nonequivalent.\qed
  \end{corollary}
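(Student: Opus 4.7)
My plan is to prove Corollary \ref{co1} by exploiting the homogeneity hypothesis to reduce the Bianchi-type integrability conditions $d^2 \equiv 0$ to a purely algebraic system in the fifteen constants $c_1, \ldots, c_{15}$.

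First I would argue that homogeneity forces each of the structural functions $c_i$ on $M$ appearing in the invariant system \eqref{homo1} to be constant. Indeed, since $(\theta^1, \ldots, \theta^5)$ is a canonically defined para-CR invariant coframe on $M$, any local para-CR symmetry preserves it, and a transitive group of such symmetries must leave the scalar functions $c_i$ invariant, hence constant. With all $c_i$ constant, the exterior derivative $d(d\theta^\mu)$ produces only cubic wedge products $\theta^i \wedge \theta^j \wedge \theta^k$ whose coefficients are polynomials (of degree at most $2$) in the $c_i$. Setting every such coefficient equal to zero yields a finite polynomial system.

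Next I would carry out this expansion for $\mu = 1, 2, 3, 4, 5$ in turn. For each $\mu$, the equation $d(d\theta^\mu) = 0$ splits into $\binom{5}{3} = 10$ scalar equations, one for each independent $3$-form in the basis $\theta^i \wedge \theta^j \wedge \theta^k$. Starting from $\mu=1$ (which involves fewest terms), one extracts immediate linear relations among the $c_i$; these can be substituted into the subsequent $\mu=2,3$ equations to obtain further linear and quadratic relations; the remaining equations from $\mu=4,5$ then pin down the $\epsilon$-sign structure. A convenient bookkeeping strategy is to collect the coefficients by ``weight'' under the scaling $(\theta^1, \theta^2, \theta^3, \theta^4, \theta^5) \mapsto (\lambda^2 \theta^1, \lambda \theta^2, \theta^3, \lambda \theta^4, \theta^5)$ suggested by the diagonal part of the matrix $S$ in Theorem \ref{the21}; equations of different weights decouple, which breaks the algebraic system into manageable pieces. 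Solving these should yield the unique solution displayed in Corollary \ref{co1} (with the sign $\epsilon = \pm 1$ appearing as an irreducible degree-of-freedom, since the system is invariant under $\epsilon \mapsto -\epsilon$ combined with an orientation change of some $\theta^i$).

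For the sub-case $I^3 \neq 0$ with $I^3|_5 \equiv 0$, I would proceed directly from Proposition \ref{susend}: imposing $I^3|_5 = 0$ in the formula for $d\omega^5$ and taking the exterior derivative, one computes $d(d\omega^5) \wedge \omega^1 \wedge \omega^2$. After using the structure equations of \eqref{sysend} and the vanishing of $I^3|_5$, this reduces, as observed in the text preceding Corollary \ref{co1}, to
\[
0 \,=\, 2 I^3\, \omega^1 \wedge \omega^2 \wedge \omega^3 \wedge \omega^4 \wedge \omega^5,
\]
forcing $I^3 \equiv 0$, contrary to hypothesis. Hence $I^3|_5 \not\equiv 0$ on any open set, and the preceding reduction that led to \eqref{homo1} applies.

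The main obstacle I anticipate is the sheer size of the algebraic system from the $d^2 = 0$ step: fifty-odd polynomial equations in fifteen unknowns. The saving grace is that the system is massively overdetermined and highly structured (every cubic monomial is either identically zero or forced to vanish); the weight-grading argument above should cut the work down dramatically, and the essential uniqueness claim is really a low-dimensional linear-algebra assertion once the quadratic couplings are handled. Since the branching diagram in the introduction promises exactly one homogeneous model in this branch (namely (ii)), the computation must collapse, and the stated values of the $c_i$ provide an explicit check that $d^2 = 0$ is indeed satisfied.
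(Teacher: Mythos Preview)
Your proposal is correct and follows essentially the same route as the paper: homogeneity forces the $c_i$ in \eqref{homo1} to be constants, the Bianchi identities $d^2\theta^\mu=0$ then become a finite polynomial system in $c_1,\dots,c_{15}$ which has the single solution displayed, and the sub-branch $I^3{}_{|5}\equiv 0$ is excluded by the same $d(d\omega^5)\wedge\omega^1\wedge\omega^2$ computation you quote from Proposition~\ref{susend}. The weight-grading trick you propose is a sensible bookkeeping device that the paper does not spell out, but the underlying argument is identical.

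One caution: your parenthetical remark that ``the system is invariant under $\epsilon\mapsto-\epsilon$ combined with an orientation change of some $\theta^i$'' is misleading and, if taken at face value, would undercut the final clause of the Corollary. The coframe $(\theta^1,\dots,\theta^5)$ is \emph{canonically} attached to the para-CR structure by the reduction procedure, so no sign flip of a $\theta^i$ is an allowed equivalence. The sign $\epsilon=\sgn(I^3{}_{|5}/I^3)$ is itself a para-CR invariant fixed by the structure, not a residual gauge freedom; that is precisely why the two values $\epsilon=\pm1$ give nonequivalent models. You should replace your explanation of $\epsilon$ by this observation.
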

We will realize these two homogeneous structures with a 5-dimensional symmetry algebra in Section \ref{sechomo}.
\subsection{The case $I^3= 0$ and corresponding homogeneous models}
If $$I^3=0$$ in an open set then Proposition \ref{susend} reduces to:
\begin{proposition}\label{susend1}
  The defining coframe $(\om^1,\om^2,\om^3,\om^4,\om^5)$ of a PDE five variables para-CR structure with $I^3=0$ can be chosen in such a way that it satisfies the following EDS:
  \be
\begin{aligned}
\der \om^1=&-\om^1\dz\varpi_1+\om^2\dz\om^4\\
\der \om^2=&-\om^1\dz\varpi_3+\om^2\dz(\varpi_2-\tfrac12\varpi_1)+\om^3\dz\om^4\\
\der \om^3=&-\om^2\dz\varpi_3+2\om^3\dz\varpi_2+I^1\,\om^1\dz\om^4\\
\der \om^4=&-\om^1\dz\varpi_4-\om^4\dz(\varpi_2+\tfrac12\varpi_1)-\om^2\dz\om^5\\
\der \om^5=&\,\om^4\dz\varpi_4-2\om^5\dz\varpi_2+I^2\,\om^1\dz\om^2.
\end{aligned}\label{sysendu}
\ee
$$\begin{aligned}
\der I^1=&\,I^1{}_{|1}\om^1+I^1{}_{|2}\om^2+I^1{}_{|3}\om^3+I^1{}_{|4}\om^4-\tfrac32 I^1\varpi_1-3I^1\varpi_2\\
\der I^2=&\,I^2{}_{|1}\om^1+I^2{}_{|2}\om^2+I^2{}_{|4}\om^4+I^2{}_{|5}\om^5-\tfrac32 I^2\varpi_1+3I^2\varpi_2\end{aligned}$$
 \be
\begin{aligned}
\der \varpi_1=&\,\om^1\dz\varpi_5+\om^2\dz\varpi_4-\om^4\dz\varpi_3\\
\der \varpi_2=&-\tfrac12\om^2\dz\varpi_4-\tfrac12\om^4\dz\varpi_3-\tfrac{1}{2}I^2{}_{|5}\om^1\dz\om^2+\tfrac{1}{2}I^1{}_{|3}\om^1\dz\om^4-\om^3\dz\om^5\\
\der \varpi_3=&\,\varpi_3\dz(\tfrac12\varpi_1+\varpi_2)+\tfrac12\om^2\dz\varpi_5+\om^3\dz\varpi_4+J^1\om^1\dz\om^2+I^2{}_{|5}\om^1\dz\om^3-\\&I^1{}_{|2}\om^1\dz\om^4+I^1\om^1\dz\om^5-\tfrac{1}{2}I^1{}_{|3}\om^2\dz\om^4\\
\der \varpi_4=&\,\varpi_4\dz(\tfrac12\varpi_1-\varpi_2)+\tfrac12\om^4\dz\varpi_5+\om^5\dz\varpi_3+I^2{}_{|4}\om^1\dz\om^2-I^2\om^1\dz\om^3+\\&\tfrac{1}{2}I^2{}_{|5}\om^2\dz\om^4-I^1{}_{|3}\om^1\dz\om^5\\
\der \varpi_5=&\,\varpi_5\dz\varpi_1+2\varpi_4\dz\varpi_3+J^2\om^1\dz\varpi_3+J^3\om^1\dz\varpi_4+J^4\om^1\dz\om^2+J^5\om^1\dz\om^3+\\&J^6 \om^1\dz\om^4+J^7\om^1\dz\om^5-I^2\om^2\dz\om^3+J^1\om^2\dz\om^4-I^1{}_{|3}\om^2\dz\om^5+\\&I^2{}_{|5}\om^3\dz\om^4-I^1\om^4\dz\om^5.
\end{aligned}\label{sysendu1}
\ee
$$\begin{aligned}\der I^2{}_{|2}=&\,\tfrac{1}{2}(2I^2{}_{|12}+I^2I^2{}_{|5})\om^1+I^2{}_{|22}\om^2+I^2{}_{|24}\om^4+I^2{}_{|25}\om^5-
2I^2{}_{|2}\varpi_1+4I^2{}_{|2}\varpi_2-3I^2\varpi_4,\\
\der I^2{}_{|4}=&\,\tfrac12(2I^2{}_{|14}-3I^2I^1{}_{|3})\om^1+(I^2{}_{|24}-I^2{}_{|1})\om^2-I^2{}_{|2}\om^3+I^2{}_{|44}\om^4+I^2{}_{|45}\om^5-\\&2I^2{}_{|4}\varpi_1+2I^2{}_{|4}\varpi_2+I^2{}_{|5}\varpi_4,\\
\der I^2{}_{|5}=&\,I^2{}_{|15}\om^1+(I^2{}_{|25}+I^2{}_{|4})\om^2+3I^2\om^3+
I^2{}_{|45}\om^4+I^2{}_{|55}\om^5-
\tfrac32I^2{}_{|5}\varpi_1+I^2{}_{|5}\varpi_2,\\
\der I^2{}_{|25}=&\,\tfrac14\big(4I^2{}_{|152}-4I^2{}_{|14}+2(I^2{}_{|5})^2-I^2I^2{}_{|55}\big)\om^1+I^2{}_{|252}\om^2+4I^2{}_{|2}\om^3+I^2{}_{|245}\om^4+\\&I^2{}_{|255}\om^5-2I^2{}_{|25}\varpi_1+2I^2{}_{|25}\varpi_2-3I^2\varpi_3-3I^2{}_{|5}\varpi_4.
\end{aligned}
$$

The coefficients $J^1,J^2,\dots,J^7$ are not important here.
  \end{proposition}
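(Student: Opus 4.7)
The plan is to derive Proposition~\ref{susend1} as the specialization of Proposition~\ref{susend} to the open set where $I^3\equiv 0$. First, I would observe that the identity $I^3\equiv 0$ forces $\der I^3\equiv 0$; reading off the formula
\[
\der I^3=I^3{}_{|1}\om^1+I^3{}_{|2}\om^2+I^3{}_{|3}\om^3+I^3{}_{|4}\om^4+I^3{}_{|5}\om^5-\tfrac12 I^3\varpi_1+I^3\varpi_2
\]
recorded in Proposition~\ref{susend}, the linear independence of the nine 1-forms $(\om^\mu,\varpi_i)$ on $\mathcal{G}^9$ yields the vanishing of every first coframe derivative $I^3{}_{|\mu}$. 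Applying the same argument inductively to the formulas for $\der I^3{}_{|2}$, $\der I^3{}_{|3}$, $\der I^3{}_{|5}$ and $\der I^3{}_{|52}$ of Proposition~\ref{susend}, one obtains that all higher coframe derivatives of $I^3$ that appear throughout the full structure equations must vanish identically on the open set.

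Second, I would substitute $I^3\equiv 0$ together with the vanishing of all these higher derivatives into the structure equations \eqref{sysend}--\eqref{sysend1}. Every term bearing a factor of $I^3$ or $I^3{}_{|\cdots}$ disappears, producing precisely the reduced exterior differential system \eqref{sysendu}--\eqref{sysendu1}. In particular, the torsion terms in $\der\om^3$, $\der\om^5$, $\der\varpi_2$, $\der\varpi_3$, $\der\varpi_4$ collapse to the monomials involving only $I^1$, $I^2$ and their first coframe derivatives $I^1{}_{|2},\,I^1{}_{|3},\,I^2{}_{|4},\,I^2{}_{|5}$ as displayed.

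Third, the four scalar identities for $\der I^2{}_{|2}$, $\der I^2{}_{|4}$, $\der I^2{}_{|5}$ and $\der I^2{}_{|25}$ follow by applying $\der^2=0$ to the already established formula
\[
\der I^2=I^2{}_{|1}\om^1+I^2{}_{|2}\om^2+I^2{}_{|4}\om^4+I^2{}_{|5}\om^5-\tfrac32 I^2\varpi_1+3I^2\varpi_2.
\]
Concretely, I would take its exterior derivative, substitute the reduced Maurer--Cartan type equations \eqref{sysendu}--\eqref{sysendu1} for $\der\om^\mu$ and $\der\varpi_i$, and expand the resulting 3-form on the basis of wedge products $\om^\mu\wedge\om^\nu\wedge\om^\rho$, $\om^\mu\wedge\om^\nu\wedge\varpi_k$ and $\om^\mu\wedge\varpi_j\wedge\varpi_k$. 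The condition that every coefficient vanish yields the coframe expansions of $I^2{}_{|2},\,I^2{}_{|4},\,I^2{}_{|5}$ stated in the proposition; iterating the procedure on the now-explicit formula for $\der I^2{}_{|5}$ gives the last identity for $\der I^2{}_{|25}$.

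The main obstacle is bookkeeping: each application of $\der^2=0$ generates many 2-form and 3-form components whose vanishing must be checked consistently, and one has to verify that no new algebraic constraint on $I^1$, $I^2$ beyond those tacitly encoded in the stated derivative formulas is imposed. This verification is essentially routine once the reduction has removed the $I^3$-dependent torsion, because the surviving unknowns $J^1,\ldots,J^7$ in $\der\varpi_5$ can be successively solved for from the vanishing of the various $\om^\mu\wedge\om^\nu\wedge\varpi_k$ components of $\der^2\varpi_i=0$, while the $\om^\mu\wedge\om^\nu\wedge\om^\rho$ components reproduce precisely the four derivative formulas for $\der I^2{}_{|\cdot}$.
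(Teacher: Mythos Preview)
Your proposal is correct and matches the paper's approach: the paper simply states that Proposition~\ref{susend1} is what Proposition~\ref{susend} reduces to when $I^3\equiv 0$, and your argument spells out precisely that reduction, including the necessary step of deriving the additional formulas for $\der I^2{}_{|2}$, $\der I^2{}_{|4}$, $\der I^2{}_{|5}$, $\der I^2{}_{|25}$ from $\der^2=0$.
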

Similarly, Corollary \ref{nurhuj} now takes the form:
\begin{corollary}
  The torsion normalization equations \eqref{norme} applied to the forms \eqref{mogi} with $(\om^1,\om^2,\om^3,\om^4,\om^5)$ with $I^3=0$ as in \eqref{sysend} yield the following para-CR invariant differential system
 \be
\begin{aligned}
\der \theta^1=&-\theta^1\dz\Om_1+\theta^2\dz\theta^4\\
\der \theta^2=&\theta^2\dz(\Om_2-\tfrac12\Om_1)-\theta^1\dz\Om_3+\theta^3\dz\theta^4\\
\der \theta^3=&2\theta^3\dz\Om_2-\theta^2\dz\Om_3+\tfrac{{\rm e}^{3\phi}}{\rho^3}I^1\,\theta^1\dz\theta^4\\
\der \theta^4=&-\theta^2\dz\theta^5-\theta^4\dz(\tfrac12\Om_1+\Om_2)-\theta^1\dz\Om_4\\
\der \theta^5=&-2\theta^5\dz\Om_2+\theta^4\dz\Om_4+\tfrac{{\rm e}^{-3\phi}}{\rho^3}I^2\,\theta^1\dz\theta^2.
\end{aligned}
\label{nurhuju}\ee
\end{corollary}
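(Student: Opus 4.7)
The plan is to deduce this corollary as a direct specialization of Corollary~\ref{nurhuj}, using that the vanishing of $I^3$ on an open set propagates to all its coframe derivatives. First I would recall that Corollary~\ref{nurhuj} already produces the general invariant system~\eqref{norhuj} via the torsion normalizations of~\eqref{norme} applied to the forms \eqref{mogi}, with the matrix $S$ of~\eqref{matris} bringing the $\omega^\mu$ of~\eqref{sysend} to the $\theta^\mu$. Nothing in the derivation of Theorem~\ref{the21} or Corollary~\ref{nurhuj} assumes $I^3 \neq 0$, so the conclusions apply verbatim to the subclass $I^3 \equiv 0$.

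Next, the point is to observe that if $I^3 \equiv 0$ holds on an open set of $M$, then all coframe directional derivatives $I^3{}_{|1}, \ldots, I^3{}_{|5}$ vanish identically there, and by induction all higher-order coframe derivatives $I^3{}_{|5}$, $I^3{}_{|4}$, $I^3{}_{|52}$, etc.\ vanish as well. Consequently each coefficient in~\eqref{norhuj} that is expressed as a polynomial in $I^3$ and its coframe derivatives collapses to zero. Concretely, the term $\tfrac{\mathrm{e}^{-\phi}}{\rho}I^3\,\theta^2\dz\theta^3$ in $\der\theta^3$, the term $-\tfrac{\mathrm{e}^\phi}{2\rho}I^3{}_{|5}\,\theta^4\dz\theta^5$ in $\der\theta^5$, and the two common coefficients
\[
\tfrac{1}{8\rho^3}\Big(2\mathrm{e}^\phi\bar{f}{}^2I^3{}_{|5}+\rho(I^3{}_{|52}+2I^3{}_{|4})-4\mathrm{e}^{-\phi}f^2I^3\Big)
\]
appearing in $\der\theta^3$ and $\der\theta^5$ all vanish identically.

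What survives in $\der\theta^3$ and $\der\theta^5$ are the terms proportional to $I^1$ and $I^2$ respectively, while $\der\theta^1$, $\der\theta^2$, $\der\theta^4$ retain their original form (they did not contain $I^3$ or its derivatives). This is exactly the system~\eqref{nurhuju}. To ensure internal consistency, I would finally cross-check compatibility with $\der^2 \equiv 0$ by verifying that in the simplified EDS, the closing relations are precisely those of Proposition~\ref{susend1}: the derivatives $\der I^1$ and $\der I^2$ given there are the reduction of the formulas in Proposition~\ref{susend} under $I^3 \equiv 0$, and the structure equations for $\varpi_1, \ldots, \varpi_5$ in~\eqref{sysendu1} follow from those in~\eqref{sysend1} by the same substitution.

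There is no real obstacle here: the corollary is essentially a bookkeeping simplification once the vanishing $I^3\equiv 0$ is propagated through Proposition~\ref{susend}'s formulas. The only point requiring a little care is the observation that $I^3 \equiv 0$ on an open set forces \emph{all} its coframe derivatives to vanish there, which is why one genuinely gets the clean system~\eqref{nurhuju} rather than merely the restriction of~\eqref{norhuj} to the zero locus of $I^3$.
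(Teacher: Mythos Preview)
Your proposal is correct and follows exactly the paper's approach: the paper simply states ``Similarly, Corollary~\ref{nurhuj} now takes the form'' and records the reduced system, so the corollary is precisely the specialization of~\eqref{norhuj} under $I^3\equiv 0$ (hence all its coframe derivatives vanish), which is what you argue in more detail.
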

So if $I^3=0$ in an open set, the structural functions $I^1$ and $I^2$ are \emph{relative invariants} of the considered para-CR structures.

We first analyze the case when
$$I^2\neq 0.$$ 

If the relative invariant $I^2\neq 0$ we can normalize the term at $\theta^1\dz\theta^2$ in $\der\theta^5$ to 1,
$$\frac{\mathrm{e}^{-3\phi}I^2}{\rho^3}=1,$$
reducing the system \eqref{nurhuju} by one dimension, due to the choice of the section
$$\rho=\mathrm{e}^{-\phi}(I^2)^{\tfrac13}.$$
After this normalization the form $\Om_2$ becomes dependent on the forms $\Om_1$, $\theta^\mu$, and thus it disappears from the equations \eqref{nurhuju}. Actually, the entire combination $\Om_2-\tfrac12\Om_1$ disappears from these equations. This in particular gives
$$
  (\der \theta^2)\dz \theta^3\dz\theta^4\dz\theta^5=\Big(-\theta^1\dz\Om_3+\frac{K}{6(I^2)^{\tfrac73}}\theta^1\dz\theta^2\Big)\dz\theta^3\dz\theta^4\dz\theta^5,
$$
with the coefficient $K=3(I^2)^{\tfrac73}u_1+L$. Here $u_1$ is an auxiliary variable introduced when normalizing the system \eqref{nurhuju}. It is analogous to $u_1$ introduced in \eqref{u1}. The quantity $L$ depends on the structural function $I^2$, its derivatives, and the free fiber coordinates $\phi$, $f^2$ and $\bar{f}{}^2$, only. The explicit linear $u_1$-dependence of $K$, where the $u_1$ term is multiplied by $(I^2)^{\tfrac73}$, which is assumed not to vanish, enables us to normalize the coefficient at $\theta^1\dz\theta^2$ in $\der\theta^2$ to 0,
$$K=0.$$
This eliminates the auxiliary variable $u_1$ from the system.

After this normalization we get in particular that:
$$\begin{aligned}\der \theta^4=&-\theta^2\dz\theta^5-\theta^4\dz\Om_1-\theta^1\dz\Om_4+\\&\frac{3\mathrm{e}^{2\phi}\bar{f}{}^2(I^2)^{\tfrac23}-I^2{}_{|2}}{3(I^2)^{\tfrac43}}\theta^2\dz\theta^4+
\frac{\mathrm{e}^{2\phi}I^2{}_{|5}}{3I^2}\theta^4\dz\theta^5.
\end{aligned}$$
This enables for further reduction, by forcing the coefficient of $\theta^2\dz\theta^4$ to vanish. This results in the restriction of the system \eqref{nurhuju} to the section
$$\bar{f}^2=\frac{I^2{}_{|2}}{3\mathrm{e}^{2\phi}(I^2)^{\tfrac23}},$$
on which the form $\Om_4$ becomes dependent on $\theta^\mu$s. Thus, it is not present in the reduced system in which, in particular the differential of $\theta^2$ reads:
$$\begin{aligned}\der \theta^2=&\,\theta^3\dz\theta^4-\theta^1\dz\Om_3+\mathrm{e}^{2\phi}\big(\dots\big)\theta^2\dz\theta^4-\frac{\mathrm{e}^{2\phi}I^2{}_{|5}}{3I^2}\theta^2\dz\theta^5.
\end{aligned}$$
This shows that when $I^3=0$ and $I^2\neq 0$, which we assume in this section, the structural function $I^2{}_{|5}$ is a \emph{relative invariant} of such para-CR structures. In particular, if we have two para-CR structures, one with $I^2{}_{|5}\neq 0$ and the other with $I^2{}_{|5}=0$, then they are locally para-CR nonequivalent.

Let us first concentrate on the case when
$$I^2{}_{|5}\neq 0.$$
In this case we can normalize the term at $\theta^2\dz\theta^5$ in $\der\theta^2$ to be equal to $-\epsilon$, where $$\epsilon=\sgn(\tfrac{I^2{}_{|5}}{I^2}).$$
This results in further reduction of the system \eqref{nurhuju} to the section on which
$$\phi=\tfrac12\log(\tfrac{3\epsilon I^2 }{I^2{}_{|5}}).$$
This makes $\Om_1$ dependent on $\theta^\mu$s only, and eliminates $\Om_1$ from the variables in the reduced EDS. Then in particular, the differential of $\theta^1$ satisfies
$$
  (\der \theta^1)\dz \theta^3\dz\theta^4\dz\theta^5=\frac{9f^2(I^2)^{\tfrac53}-3I^2(I^2{}_{|25}+I^2{}_{|4})+2I^2{}_{|2}I^2{}_{|5}}{3(I^2)^{\tfrac43}I^2{}_{|5}}\theta^1\dz\theta^2\dz\theta^3\dz\theta^4\dz\theta^5.
$$
This enables for the ultimate normalization which kills the $\theta^1\dz\theta^2$ term in $\der\theta^1$. It is obtained by taking the section
$$f^2=\frac{3I^2(I^2{}_{|25}+I^2{}_{|4})-2I^2{}_{|2}I^2{}_{|5}}{9(I^2)^{\tfrac53}}.$$
After this normalization the system \eqref{norhuj} reduces to the original five manifold $M$ on which the para-CR structure is defined. It brings the initial forms $\omega^\mu$ satisfying the system \eqref{sysend} to the fully para-CR invariant forms $\theta^\mu$ on $M$, via the formula $\theta^\mu=S^\mu{}_{\nu}\om^\nu$, where the matrix $S=(S^\mu{}_{\nu})$ is given by:
$$S={\tiny \bma
\frac{\epsilon I^2{}_{|5}}{3(I^2)^{\tfrac13}}&0&0&0&0\\
\frac{3I^2(I^2{}_{|25}+I^2{}_{|4})-2I^2{}_{|2}I^2{}_{|5}}{9(I^2)^{\tfrac53}}&(I^2)^{\tfrac13}&0&0&0\\
\frac{\epsilon(3I^2(I^2{}_{|25}+I^2{}_{|4})-2I^2{}_{|2}I^2{}_{|5})^2}{54(I^2)^{3}I^2{}_{|5}}&\frac{\epsilon(3I^2(I^2{}_{|25}+I^2{}_{|4})-2I^2{}_{|2}I^2{}_{|5})}{3I^2I^2{}_{|5}}&\tfrac{3\eps I^2}{I^2{}_{|5}}&0&0\\
\tfrac{\epsilon I^2{}_{|2}I^2{}_{|5}}{9 (I^2)^{\tfrac53}}&0&0&\tfrac{\epsilon I^2{}_{|5}}{3(I^2)^{\tfrac23}}&0\\
-\tfrac{\epsilon (I^2{}_{|2})^2I^2{}_{|5}}{54 (I^2)^3}&0&0&-\tfrac{\epsilon I^2{}_{|2}I^2{}_{|5}}{9(I^2)^2}&\tfrac{\epsilon I^2{}_{|5}}{3I^2}
\ema}.$$
The resulting para-CR invariant EDS on $M$ is presented in the following statement.
\begin{theorem}
  Every PDE five variables para-CR structure on a 5-dimensional manifold $M$ with the relative invariants $I^3=0$, $I^2\neq 0$ and $I^2{}_{|5}\neq 0$ uniquely defines five 1-forms $(\theta^1,\theta^2,\theta^3,\theta^4,\theta^5)$ on $M$ which satisfy the following exterior differential system
  \be
  \begin{aligned}
  \der\theta^1=&-\epsilon\Big(\theta^1\dz\theta^3+c_4\theta^1\dz\theta^4-c_2\theta^1\dz\theta^5\Big)+\theta^2\dz\theta^4\\
  \der\theta^2=&-\epsilon\Big(c_8\theta^1\dz\theta^2+c_3\theta^2\dz\theta^4+\theta^2\dz\theta^5\Big)+c_7\theta^1\dz\theta^4+c_6\theta^1\dz\theta^5+\theta^3\dz\theta^4\\
  \der\theta^3=&\,\epsilon\Big(c_{10}\theta^1\dz\theta^3+c_1\theta^1\dz\theta^4+(c_4-2c_3)\theta^3\dz\theta^4+(c_2-2)\theta^3\dz\theta^5\Big)-\\&c_{11}\theta^1\dz\theta^2+c_7\theta^2\dz\theta^4+c_6\theta^2\dz\theta^5\\
  \der\theta^4=&\,\epsilon\Big(-c_{9}\theta^1\dz\theta^4+c_{3}\theta^1\dz\theta^5+\theta^3\dz\theta^4+(1-c_2)\theta^4\dz\theta^5\Big)+c_{5}\theta^1\dz\theta^2-\theta^2\dz\theta^5\\
  \der\theta^5=&\,\epsilon\Big(-c_{12}\theta^1\dz\theta^4-c_{10}\theta^1\dz\theta^5+\theta^3\dz\theta^5+(c_4-3c_3)\theta^4\dz\theta^5\Big)+\\&\theta^1\dz\theta^2+c_5\theta^2\dz\theta^4.
  \end{aligned}\label{homo2}
  \ee
  Here, the coefficients $c_1,c_2,\dots,c_{12}$ are totally expressed in terms of the structural functions $I^1,I^2$ and their derivatives. They are uniquely defined by the para-CR structure on $M$ (but their explicit forms are not relevant here) .

  Two different PDE five variables para-CR structures with their corresponding relative invariants $I^3=0$, $I^2\neq 0$ and $I^2{}_{|5}\neq 0$ are locally para-CR equivalent if and only if their corresponding 1-forms $\theta^\mu$ and $\bar{\theta}{}^\mu$  can be transformed to each other by a local diffeomorphism $\Phi$, i.e. if and only if $\Phi^*(\bar{\theta}{}^\mu)=\theta^\mu$ for all $\mu=1,2,3,4,5$.
  \end{theorem}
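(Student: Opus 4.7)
The plan is to execute step-by-step the Cartan-type reduction sketched in the paragraphs preceding the statement, turning each branch hypothesis ($I^3 \equiv 0$, $I^2 \neq 0$, $I^2{}_{|5} \neq 0$) into a concrete group-variable elimination on the nine-dimensional bundle $\mathcal{G}^9 \to M$ carrying the forms of \eqref{nurhuju}.

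First I normalize $\rho$ by setting the coefficient $\mathrm{e}^{-3\phi} I^2 / \rho^3$ of $\theta^1 \dz \theta^2$ in $\der\theta^5$ equal to $1$; since $I^2 \neq 0$, this yields $\rho = \mathrm{e}^{-\phi}(I^2)^{1/3}$, after which $\Omega_2 - \tfrac12\Omega_1$ becomes a $\theta^\mu$-linear combination and drops out. Next, because $\Omega_3$ is defined only modulo a free auxiliary variable $u_1$ (analogous to the one introduced in \eqref{u1}), the coefficient of $\theta^1\dz\theta^2$ in $\der\theta^2$ is affine in $u_1$ with leading factor proportional to $(I^2)^{7/3}\neq 0$; solving for $u_1$ annihilates this torsion. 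The coefficient of $\theta^2\dz\theta^4$ in $\der\theta^4$ is in turn affine in $\bar{f}^2$ with nonzero leading term, and setting it to $0$ fixes $\bar{f}^2 = I^2{}_{|2}/(3\mathrm{e}^{2\phi}(I^2)^{2/3})$ and makes $\Omega_4$ dependent on the $\theta^\mu$. At this stage the coefficient of $\theta^2\dz\theta^5$ in $\der\theta^2$ equals $-\mathrm{e}^{2\phi}I^2{}_{|5}/(3I^2)$, which is nonzero by the third hypothesis; normalizing it to $-\epsilon$, where $\epsilon=\sgn(I^2{}_{|5}/I^2)$, gives $\phi = \tfrac12\log(3\epsilon I^2/I^2{}_{|5})$ and eliminates $\Omega_1$. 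Finally the coefficient of $\theta^1\dz\theta^2$ in $\der\theta^1$ is affine in $f^2$ with nonzero leading coefficient; solving for $f^2$ kills it and eliminates $\Omega_3$, producing precisely the matrix $S$ displayed just above the statement and forcing the $\theta^\mu$ to descend to $M$.

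With every fiber parameter now fixed, I compute the five structure equations directly. Since the $\Omega_i$ have all been expressed as linear combinations of the $\theta^\nu$, each $\der\theta^\mu$ becomes a quadratic expression in the $\theta^\nu$ whose coefficients are polynomial-rational in $I^1$, $I^2$ and their iterated coframe derivatives; collecting by pair $\theta^\nu\dz\theta^\lambda$ yields the system \eqref{homo2} and defines the $c_i$ uniquely. The equivalence clause follows by a standard abstract argument: since every reduction step was dictated by para-CR invariant normalizations, the coframe $(\theta^1,\dots,\theta^5)$ is functorial under para-CR equivalences, so any local equivalence $\Phi: M\to\bar{M}$ must satisfy $\Phi^*\bar\theta^\mu=\theta^\mu$; conversely, given such a $\Phi$, pulling back the reduction matrix $S$ shows that $\Phi$ intertwines the original defining coframes modulo an element of the structure group $G_0$ of \eqref{grG0}, which is precisely the definition of a para-CR equivalence.

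The main technical obstacle is verifying at each of the five reduction steps that the pivot coefficient is genuinely nonzero on the open set under consideration; this reduces to the standing hypotheses $I^2\neq 0$ and $I^2{}_{|5}\neq 0$ combined with the explicit differential identities of Proposition \ref{susend1}, in particular the formulas displayed there for $\der I^2$, $\der I^2{}_{|2}$, $\der I^2{}_{|5}$ and $\der I^2{}_{|25}$. A secondary, purely mechanical difficulty is obtaining closed-form expressions for the $c_i$; since the statement only asserts their existence and uniqueness, and since the analogous computation has already been carried out in the $I^3\neq 0$ branch, these explicit forms may be left implicit.
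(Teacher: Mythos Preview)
Your proposal is correct and follows essentially the same approach as the paper: the five successive normalizations of $\rho$, $u_1$, $\bar f^2$, $\phi$, $f^2$ (in that order, with the same pivots and the same formulas) are exactly those carried out in the text preceding the theorem, and your equivalence argument is the standard functoriality of Cartan's reduction that the paper takes for granted. The only cosmetic difference is that you articulate the functoriality clause explicitly, whereas the paper leaves it implicit in the phrase ``uniquely defines''.
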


The above theorem, and the reasoning preceding it, assure that the only possible \emph{homogeneous} PDE five variables para-CR structures with $I^3= 0$, $I^2\neq 0$ and $I^2{}_{|5}\neq 0$ are those that correspond to the forms $(\theta^1,\theta^2,\dots,\theta^5)$ satisfying the system \eqref{homo2} with \emph{all} coefficients $c_1,c_2,\dots,c_{12}$ being \emph{constants}. If such structures exist, these constants must satisfy the system \eqref{homo2} and its differential consequences $\der(\der(\theta^\mu))=0$ for all $\mu=1,2,3,4,5$. This is a very strong condition which \emph{has a one parameter family of solutions} only. This family is given by:
$$\begin{aligned}&c_3=c_4=c_{6}=c_{10}=0,\quad c_1=c_2=c_{11}=c_{12}=1,\quad c_5=c_9=-c_7=-c_8=s,\end{aligned}$$
where $s$ is a real parameter.

It is easy to see that the case when $I^3=0$ and $$I^2\neq 0\quad\mathrm{while}\quad I^2{}_{|5}=0$$ in an open set is \emph{impossible}. Indeed, using the EDS from Proposition \ref{susend1} and the condition that $I^2{}_{|5}=0$ we see that
$$0=\der(\der(I^2))\dz\om^1\dz\om^2\dz\om^4=3I^2\om^1\dz\om^2\dz\om^3\om^4\dz\om^5,$$
i.e. that in particular $I^2=0$ in the neighbourhood, which is a \emph{contradiction}.

The last case to consider when $I^3=0$ is to assume that
$$I^2=0$$
in an open set.

If this is the case we also have $I^2{}_{|5}=0$ in the equations of Proposition \ref{susend1}. Then using the second equation \eqref{sysendu1} we find that
$$0=\der(\der(\varpi_2))\dz\om^4=-I^1{}_{|3}\om^1\dz\om^2\dz\om^4\dz\om^5,$$
which implies that $I^1{}_{|3}=0$ in the open set. Having established this we get that
$$0=\der(\der(\varpi_2))\dz\om^2=-\tfrac32 I^1\om^1\dz\om^2\dz\om^4\dz\om^5,$$
i.e. that also $I^1=0$ in the open set.

Thus, the assumption that $I^3=I^2=0$ in an open set implies that also $I^1=0$ in the same open set, i.e. that \emph{all the fundamental invariants} of the para-CR structure in question \emph{vanish}. Therefore \emph{if}
$$I^3=I^2=0$$ \emph{the corresponding para-CR structure is locally para-CR equivalent to the flat model} described in Section \ref{flatm}. 

Summarizing we have the following
\begin{corollary}\label{co2}
  In the case when $I^3=0$ in an open set the only \emph{homogeneous}  models of a PDE five variables para-CR structure with $I^3=0$ are given either by\\
  \indent \emph{(i)} the flat model represented by the coframe \eqref{flatforms},\\
  or\\
  \indent \emph{(ii)}  must be described by the invariant forms $(\theta^1,\theta^2,\theta^3,\theta^4,\theta^5)$ satisfying
 \be
  \begin{aligned}
  \der\theta^1=&-\epsilon\Big(\theta^1\dz\theta^3+\theta^1\dz\theta^5\Big)+\theta^2\dz\theta^4\\
  \der\theta^2=&\,\epsilon\Big(s\theta^1\dz\theta^2-\theta^2\dz\theta^5\Big)-s\theta^1\dz\theta^4+\theta^3\dz\theta^4\\
  \der\theta^3=&\,\epsilon\Big(\theta^1\dz\theta^4-\theta^3\dz\theta^5\Big)-\theta^1\dz\theta^2-s\theta^2\dz\theta^4\\
  \der\theta^4=&\,\epsilon\Big(-s\theta^1\dz\theta^4+\theta^3\dz\theta^4\Big)+s\theta^1\dz\theta^2-\theta^2\dz\theta^5\\
  \der\theta^5=&\,\epsilon\Big(-\theta^1\dz\theta^4+\theta^3\dz\theta^5\Big)+\theta^1\dz\theta^2+s\theta^2\dz\theta^4.
  \end{aligned}\label{homo2expl}
  \ee
  Here $\epsilon=\pm1$, $s$ is an arbitrary real number, and the structures with different values of $(\epsilon,s)$ are para-CR nonequivalent.
  \end{corollary}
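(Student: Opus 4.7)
The plan is to organize the proof as a trichotomy over the two subsidiary relative invariants $I^2$ and $I^2{}_{|5}$ within the branch $I^3\equiv 0$, each sub-branch having already been prepared in the discussion immediately preceding this corollary. Specifically, I would split into: (a) $I^2\neq 0$ and $I^2{}_{|5}\neq 0$, (b) $I^2\neq 0$ and $I^2{}_{|5}\equiv 0$, and (c) $I^2\equiv 0$, and then show that (b) is empty, (a) produces the family (ii), and (c) produces the flat model (i).

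For sub-branch (a), the theorem immediately preceding this corollary produces an $\{e\}$-structure on $M$ whose invariant coframe $(\theta^1,\ldots,\theta^5)$ satisfies \eqref{homo2} with twelve invariant coefficient functions $c_1,\ldots,c_{12}$. A homogeneous structure is characterized by the constancy of all its $\{e\}$-structure invariants; hence for homogeneous models the $c_i$ must be real constants. The main step is then to impose the closedness constraints $\mathrm{d}(\mathrm{d}\theta^\mu)=0$ on \eqref{homo2} with constant $c_i$ and expand in the basis of $3$-forms $\theta^i\wedge\theta^j\wedge\theta^k$. This yields an overdetermined polynomial system in the $c_i$, which a direct symbolic computation would solve; the unique family of solutions would be verified to be $c_3=c_4=c_6=c_{10}=0$, $c_1=c_2=c_{11}=c_{12}=1$, and $c_5=c_9=-c_7=-c_8=s$ for any $s\in\mathbb{R}$, together with $\epsilon=\pm 1$, exactly reproducing \eqref{homo2expl}.

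For sub-branch (b), I would invoke Proposition \ref{susend1}: setting $I^2{}_{|5}=0$ in the displayed formula for $\mathrm{d}I^2$ and then wedging the identity $\mathrm{d}(\mathrm{d}I^2)=0$ with $\omega^1\wedge\omega^2\wedge\omega^4$ produces $3I^2\,\omega^1\wedge\omega^2\wedge\omega^3\wedge\omega^4\wedge\omega^5=0$, contradicting $I^2\neq 0$ on an open set. For sub-branch (c), the vanishing $I^2\equiv 0$ automatically kills $I^2{}_{|5}$, and I would then use the formula for $\mathrm{d}\varpi_2$ in Proposition \ref{susend1}: computing $\mathrm{d}(\mathrm{d}\varpi_2)=0$ and wedging with $\omega^4$ forces $I^1{}_{|3}=0$, and wedging further with $\omega^2$ forces $I^1=0$. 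With all three fundamental invariants vanishing, the lifted structure equations \eqref{nurhuju} collapse to the flat-model equations \eqref{edsf}, so by Theorem \ref{th1.1} the structure is locally para-CR equivalent to \eqref{flatforms}. The mutual non-equivalence of the models \eqref{homo2expl} for distinct $(\epsilon,s)$ would follow from the general principle that two local $\{e\}$-structures with constant structure coefficients can only be equivalent if their coefficient data agree as abstract Lie-algebra data; here both $\epsilon$ and $s$ appear as independent Lie-algebra invariants, read off, for instance, from the coefficients of $\theta^1\wedge\theta^2$ in $\mathrm{d}\theta^2$ and of $\theta^1\wedge\theta^4$ in $\mathrm{d}\theta^2$. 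The principal obstacle is the symbolic step in (a): solving the $\mathrm{d}\circ\mathrm{d}=0$ polynomial system in twelve unknowns and verifying that its full set of real solutions reduces to exactly the one-parameter family displayed.
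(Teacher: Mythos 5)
Your proposal is correct and follows essentially the same route as the paper: the same trichotomy over $I^2$ and $I^2{}_{|5}$, the same wedge computations $\der(\der I^2)\wedge\om^1\wedge\om^2\wedge\om^4$ and $\der(\der\varpi_2)\wedge\om^4$, $\der(\der\varpi_2)\wedge\om^2$ to eliminate the sub-branches, and the same $\der\circ\der=0$ analysis of \eqref{homo2} with constant $c_i$ yielding the one-parameter family. The only cosmetic caveat is that reading $\epsilon$ off the $\theta^1\wedge\theta^2$ coefficient of $\der\theta^2$ fails when $s=0$, where one should instead use, e.g., the $\theta^2\wedge\theta^5$ coefficient.
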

We will realize these homogeneous structures in the next Section.
\subsection{All homogeneous models}\label{sechomo}
Combining Corollaries \ref{co1} and \ref{co2} we have the following proposition.
\begin{proposition}\label{classs}
   The only \emph{homogeneous}  models of a PDE five variables para-CR structure are given either by\\
  \indent \emph{(i)} the flat model represented by the coframe \eqref{flatforms},\\
  or\\
  \indent \emph{(ii)}  must be described by the invariant forms $(\theta^1,\theta^2,\theta^3,\theta^4,\theta^5)$ satisfying the system \eqref{homo1expl}\\
  or\\
  \indent \emph{(iii)} must be described by the invariant forms $(\theta^1,\theta^2,\theta^3,\theta^4,\theta^5)$ satisfying the system \eqref{homo2expl}.

The structure \emph{(i)} has $\sog(3,2)$ as the local group of para-CR symmetries, whereas the structures described in \emph{(ii)} and \emph{(iii)} have maximal local group of para-CR symmetries of dimension 5. There are no homogeneous models of such para-CR structures with a local symmetry group of exact dimensions six to nine.  
\end{proposition}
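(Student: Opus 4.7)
The proof is in large part assembly: Corollaries~\ref{co1} and~\ref{co2} already furnish the exhaustive list of structural equations admissible in the homogeneous case, so the first task is simply to collate them. If $I^3 \not\equiv 0$, Corollary~\ref{co1} forces the $\{e\}$-coframe to satisfy~\eqref{homo1expl}, giving case~(ii) with the two inequivalent sign choices $\epsilon=\pm 1$. If $I^3 \equiv 0$, Corollary~\ref{co2} dichotomizes into the flat model~(i) (when additionally $I^2\equiv 0$, and $I^1\equiv 0$ is then forced) or into the one-parameter family~(iii) satisfying~\eqref{homo2expl}. Mutual inequivalence within and between the families follows from the general principle of Cartan's method: two $\{e\}$-structures with different structure constants cannot be pulled back to each other by a local diffeomorphism.

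For the symmetry dimensions, the flat case~(i) is settled by Theorem~\ref{th1.1}: the full ten-form coframe on $\mathcal{G}^{10}$ satisfies the Maurer--Cartan equations of $\mathfrak{so}(3,2)$, exhibiting $\mathcal{G}^{10}\to M$ as (an open piece of) the principal bundle $\sog(3,2)\to\sog(3,2)/G$, whose automorphism group is $\sog(3,2)$, of dimension $10$. For cases~(ii) and~(iii), the Cartan reductions executed in the preceding subsections produce a \emph{rigid} coframe $(\theta^1,\dots,\theta^5)$ on the five-dimensional base $M$ itself, with structure equations having \emph{constant} coefficients. By the standard theory of $\{e\}$-structures, the local symmetry algebra has dimension at most $\dim M=5$; moreover, constancy of all coefficients means the equations are genuine Maurer--Cartan equations of a five-dimensional Lie algebra, so a local model is a Lie group on which left translations realize a sharply transitive symmetry action. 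This proves the maximal local symmetry group has dimension exactly $5$.

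The exclusion of intermediate dimensions six through nine is the structurally interesting point, and it is forced by the branching diagram displayed in the introduction. Each sub-branch of Cartan's algorithm is driven by a relative invariant: the vanishing of $I^3$ opens one branch, its non-vanishing permits successive normalizations of $\rho$, $u_1$, $\bar f^2$, and then of $\phi$, $f^2$; similarly for $I^2$, $I^2{}_{|5}$, $I^3{}_{|5}$. In every scenario where \emph{any} of the primary invariants $I^1,I^2,I^3$ survives, the reduction proceeds all the way down to $M^5$, yielding at most a five-dimensional symmetry; and the sub-cases $I^3\not\equiv 0 \wedge I^3{}_{|5}\equiv 0$ and $I^2\not\equiv 0 \wedge I^2{}_{|5}\equiv 0$ were both shown to be differentially inconsistent, so no partial reduction survives. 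The only way to retain a higher-dimensional automorphism bundle is for all reductions to be simultaneously blocked, which forces $I^1=I^2=I^3\equiv 0$ and lands in the flat case~(i). Hence the possible dimensions are exactly $5$ or $10$, with the gap $\{6,7,8,9\}$ empty.

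The main obstacle, should one attempt to write this out in full, is not the combinatorial assembly but the verification in the constant-coefficient Maurer--Cartan systems~\eqref{homo1expl} and~\eqref{homo2expl} that they are actually realizable by some para-CR $5$-manifold\,\,---\,\,this is precisely the content of Theorem~\ref{main-theorem}, where the explicit PDE systems~(i)--(iiib) are produced and shown to have the prescribed homogeneous para-CR geometry, thereby confirming that the $5$-dimensional symmetry bound is attained rather than merely upper-bounded.
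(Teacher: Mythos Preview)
Your proposal is correct and follows the same route as the paper, which in fact offers no separate proof at all: the proposition is introduced by the single sentence ``Combining Corollaries~\ref{co1} and~\ref{co2} we have the following proposition,'' and the symmetry-dimension and gap claims are left implicit in the preceding Cartan reductions. Your write-up makes explicit what the paper leaves tacit\,---\,the $\{e\}$-structure argument bounding the symmetry dimension by $5$ in cases~(ii) and~(iii), the appeal to Theorem~\ref{th1.1} for the flat case, and the dichotomy forcing the gap $\{6,\dots,9\}$\,---\,so if anything you have supplied more than the authors did.
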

In this section we will show that all the structures described in this proposition do exist, and will associate a system of PDEs on the plane, of the form \eqref{sysf}-\eqref{sysfc}, to each of them.

We have the following statement.
\begin{theorem}
\label{theorem-all-homogeneous-models}
  All homogeneous models of PDE five variables para-CR structures are given, in terms of their defining PDEs, by\\
  \indent \emph{(i)}\hspace{0.1cm} $ z_y=\tfrac14 (z_x)^2\quad \&\quad z_{xxx}=0;$\hspace{2.25cm} it is the flat case,\\
  \indent \emph{(ii)} $ z_y=\tfrac14 (z_x)^2\quad \& \quad z_{xxx}=(z_{xx})^3;$\hspace{1.45cm} this is the case corresponding to\\ \indent \hspace{7.2cm}\emph{(ii)} in 
  Proposition \ref{classs} with \\ \indent \hspace{7.1cm} $\epsilon=-\sgn(z_{xx})$ ,\\
  \indent \emph{(iiia)} $ z_y=\tfrac14 (z_x)^b\,\, \& \,\,z_{xxx}=(2-b)\frac{(z_{xx})^2}{z_x};$\hspace{0.6cm} this is the case corresponding to \\
  \indent \hspace{7.1cm} \emph{(iii)} in  Proposition \ref{classs} with \\
  \indent \hspace{7.1cm} $\epsilon=-\sgn(z_{xx})$ and $s\leq -3\cdot 2^{-\tfrac53}$;\\
  \indent \hspace{7.1cm} of course $z_x>0$,\\
  \indent \emph{(iiib)} $ z_y=f(z_x)\quad \& \quad z_{xxx}=h(z_x)\big(z_{xx}\big)^2$, 
  where the function $f$ is determined\\
  \indent by the implicit equation
  \be\big(z_x^2+f(z_x)^2\big)\mathrm{exp}\Big(2b\,\mathrm{arctan}\tfrac{bz_x-f(z_x)}{z_x+bf(z_x)}\Big)=1+b^2\label{ef}\ee \indent and \be h(z_x)=\frac{(b^2-3)z_x-4bf(z_x)}{\big(f(z_x)-bz_x\big)^2};\label{eh}\ee
  \indent this is the case corresponding to \emph{(iii)} with $\epsilon=-\sgn(z_{xx})$ and $s>-3\cdot 2^{-\tfrac53}$. 
  \end{theorem}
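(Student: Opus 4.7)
The plan is to verify, for each proposed PDE pair, three things: (a) the structural conditions $F_r=0$, $F_{pp}\neq 0$, and the complete integrability $D^3 F=\Delta H$ hold; (b) the primary relative invariants $I^1,I^2,I^3$ computed from the formulas of Theorem~1.1 place the system in the correct branch of the classification diagram; and (c) following the reduction cascade of Section~3, the invariant coframe produced by Theorem~\ref{the21} and its refinements satisfies the Maurer-Cartan equations of Proposition~\ref{classs} with the claimed signs of $\epsilon$ and values of $s$. Case (i) is already established in Subsection~\ref{flatm}, where $\soa(3,2)$ appears as the symmetry algebra.

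For case (ii), take $F=\tfrac14 p^2$, $H=r^3$. Then $F_{pp}=\tfrac12$ and $F_{ppp}=0$, so $I^2\equiv 0$ while $I^3=\tfrac{F_{pp}H_{rr}}{3F_{pp}}=2r\not\equiv 0$, placing us in the $I^3\neq 0$ branch. Integrability is a direct check: one computes that $D^3 F$ and $\Delta H$ both equal $\tfrac32 r^4(1+pr)$. One then carries out the sequence of normalizations of the subsection handling $I^3\neq 0$, $I^3{}_{|5}\neq 0$ (namely $\rho$, $u_1$, $\bar f{}^2$, then $\phi$, $f^2$) and matches the resulting $\{e\}$-coframe against \eqref{homo1expl}. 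The sign $\epsilon=-\sgn(z_{xx})$ is forced by the sign of $I^3{}_{|5}$ at the homogeneous point.

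For cases (iiia) and (iiib) we first exploit that $I^3\equiv 0$ is equivalent to $2F_{ppp}+F_{pp}H_{rr}\equiv 0$; imposing additionally that $H$ has the form $h(p)\,(z_{xx})^2$ turns this into the ODE $2F_{ppp}+2h(p)F_{pp}=0$ coupling $h$ with $F$. In (iiia) with $F=\tfrac14 p^b$ this forces precisely $h(p)=(2-b)/p$, and integrability $D^3F=\Delta H$ is a one-line verification; the invariant $I^2$ computes to a nonzero monomial in $p$, so we land in the $I^3\equiv 0$, $I^2\neq 0$, $I^2{}_{|5}\neq 0$ branch. In (iiib) one differentiates the implicit relation \eqref{ef} to extract $F_p=f$, $F_{pp}$, $F_{ppp}$ as rational functions of $p,f(p),b$; the resulting identity $h=-F_{ppp}/F_{pp}$ then simplifies to \eqref{eh}. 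In each case, running the further normalizations of that branch produces an invariant coframe fitting \eqref{homo2expl}, and the value of $s$ is read off from, e.g., the coefficient of $\theta^1\dz\theta^2$ in $\der\theta^2$.

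The main obstacle is case (iiib): establishing that \eqref{ef} defines $f$ smoothly on a suitable interval for every $b>0$, and computing the map $b\mapsto s(b)$. The cleanest route is the polar parametrization $z_x=\sqrt{1+b^2}\,\cos\varphi\,\mathrm{e}^{-b\varphi+b\arctan b}$ and $f(z_x)=-\sqrt{1+b^2}\,\sin\varphi\,\mathrm{e}^{-b\varphi+b\arctan b}$ suggested directly by \eqref{ef}; under this substitution, $\der z_x$ and $\der f(z_x)$ acquire a common factor, so $F_{pp},F_{ppp},\ldots$ become rational expressions in $\tan\varphi$ and $b$, and the verification of integrability, the computation of $I^2$, and the extraction of $s$ all reduce to algebraic manipulations. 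It remains to verify that, together with the one-parameter family of (iiia), the map $b\mapsto s(b)$ attains every real value of $s$, with the matching boundary $s=-3\cdot 2^{-5/3}$ giving the junction of the two families. That our three non-flat PDEs exhaust the non-flat homogeneous models, and that the entries of the list are mutually inequivalent, follows directly from Proposition~\ref{classs} and the fact that each structure is rigidly determined by its $\{e\}$-coframe.
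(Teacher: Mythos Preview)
Your outline is correct and follows the same architecture as the paper's proof: verify integrability and the location in the invariant tree for each PDE pair, then carry out the normalization cascade to land on the Maurer--Cartan systems \eqref{homo1expl} or \eqref{homo2expl}, and finally check that the map $b\mapsto s(b)$ covers the stated range.

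There are two worthwhile differences from the paper's argument. First, the paper does not re-run the abstract normalization procedure case by case; instead, for each of (ii), (iiia), (iiib) it writes down an explicit $5\times 5$ matrix $S\in G_0$ and simply verifies that $\theta^\mu=S^\mu{}_\nu\om^\nu$ satisfies \eqref{homo1expl} or \eqref{homo2expl}. This makes the proof a direct check rather than a re-execution of Cartan reduction, and yields the exact formulas $s=-\tfrac32\,\dfrac{1-b+b^2}{[(b-2)(b+1)(2b-1)]^{2/3}}$ in (iiia) and $s=-\tfrac32\,\dfrac{b^2-3}{[2b(9+b^2)]^{2/3}}$ in (iiib), from which the claimed ranges follow by elementary calculus. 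Second, and more substantially, for (iiib) the paper does not work with the implicit relation \eqref{ef} at all: it uses the explicit general solution $z=\exp\!\big(b\,\arctan\tfrac{y+\bar y}{x+\bar x}\big)\sqrt{(x+\bar x)^2+(y+\bar y)^2}-\bar z$, introduces the single variable $u=(y+\bar y)/(x+\bar x)$, and writes the para-CR coframe in the coordinates $(y,\bar x,\bar y,\bar z,u)$. This completely bypasses the implicit-function issue you flag as the ``main obstacle''; your polar parametrization is a cousin of this trick (your angle $\varphi$ plays the role of $\arctan u$), but working directly from the general solution is both conceptually cleaner and computationally safer, since it avoids having to check smoothness and invertibility of \eqref{ef} separately. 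If you keep your route, you should at least record the explicit $s(b)$ formulas and the range analysis, since ``it remains to verify'' is doing real work here.
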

\begin{proof}
  We first show that the case (ii) in the theorem realizes the EDS \eqref{homo1expl}, and therefore exhausts all possible homogeneous PDE five variables para-CR structures with $I^3\neq 0$.

  We start with the PDE system
  \be z_y=\tfrac14(z_x)^2\,\,\&\,\,z_{xxx}=(z_{xx})^3\label{sy1}\ee and associate with it the coframe $(\om^\mu)$, $\mu=1,2,\dots,5$ via \eqref{iniom}. Explicitly, we have:
  \be\begin{aligned}
    \om^1=&\der z-p\der x-\tfrac14p^2\der y\\
    \om^2=&\der p-r\der x-\tfrac12p r \der y\\
    \om^3=&\der r-r^3\der x-\tfrac12r^2(1+pr)\der y\\
    \om^4=&\der x\\
    \om^5=&\der y.
      \end{aligned}
  \label{exa11}\ee
  Now, it is easy to check that the matrix
  $$S=\bma
  8\epsilon r^3&0&0&0&0\\
  2r^3&2r&0&0&0\\
  \tfrac{\epsilon r^3}{4}&0&\tfrac{\epsilon}{2r}&0&0\\
  0&0&0&-4\epsilon r^2&-2\epsilon p r^2\\
  0&0&0&0&\epsilon r
  \ema,$$
  which has values in the allowed para-CR group $G_0$ as in \eqref{grG0}, transforms the coframe \eqref{exa11} defining the PDE 
five variables
para-CR structure of the system \eqref{sy1} to the invariant forms $\theta^\mu=S^\mu{}_{\nu}\om^\nu$ satisfying the homogeneous system \eqref{homo1expl}, provided that $\eps=-\sgn(r)=-\sgn(z_{xx})$. 

  Now we show that the EDS \eqref{homo2expl} with $s \leq -3\cdot 2^{-\tfrac53}$ is realized by the PDE five variables para-CR structure associated with the system
  \be
  z_y=\tfrac14(z_x)^b\,\,\&\,\,z_{xxx}=(2-b)\frac{(z_{xx})^2}{z_x},\label{sy2}
  \ee
  with $b=\mathrm{const}$.

Using this PDE system we define the corresponding PDE
five variables para-CR structure via the coframe \eqref{iniom}. This reads:
  \be\begin{aligned}
    \om^1=&\,\der z-p\der x-\tfrac14p^b\der y\\
    \om^2=&\,\der p-r\der x-\tfrac14bp^{b-1} r \der y\\
    \om^3=&\,\der r-\frac{(2-b)r^2}{p}\der x-\tfrac14br^2p^{b-2}\der y\\
    \om^4=&\,\der x\\
    \om^5=&\,\der y.
      \end{aligned}
  \label{exa12}\ee
  Now introducing the matrix
$$S=\bma
  \frac{\epsilon rt^2}{9p^2}&0&0&0&0\\
  \frac{-(b+1) rt}{9p^2}&\frac{t}{3p}&0&0&0\\
  \tfrac{-\epsilon r(1-7b+b^2)}{18p^2}&\tfrac{\epsilon(b-5)}{3p}&\tfrac{\epsilon}{r}&0&0\\
  \frac{-\epsilon(b+1) rt}{9p^2}&0&0&\frac{-\epsilon rt}{3p}&\frac{-\epsilon brp^{b-2}t}{12}\\
  \tfrac{\epsilon r(1-7b+b^2)}{18p^2}&0&0&\tfrac{\epsilon(1-2b)r}{3p}&\tfrac{\epsilon (b-2)brp^{b-2}}{12}
  \ema,$$
 with  $t=\big((b-2)(b+1)(2b-1)\big)^{\tfrac13}$ and $\epsilon=-\sgn(z_{xx})$, which again has values in the allowed para-CR group $G_0$ as in \eqref{grG0}, it is easy to see that the transformed coframe $\theta^\mu=S^\mu{}_{\nu}\om^\nu$ with $\omega^\mu$s as in \eqref{exa12}, satisfies the homogeneous system \eqref{homo2expl} with 
\[
s
\,=\,
-\,\frac{3}{2}\,
\frac{1-b+b^2}{\big[(b-2)(b+1)(2b-1)\big]^{2/3}},
\ \ \ \ \ \ \ \ \ \ \ \ 
\frac{ds}{db}
\,=\,
\frac{27}{2}\,
\frac{b\,(b-1)}{
\big[
(b-2)\,(b+1)\,(2\,b-1)
\big]^{5/3}}.
\]

\includegraphics[scale=0.30]{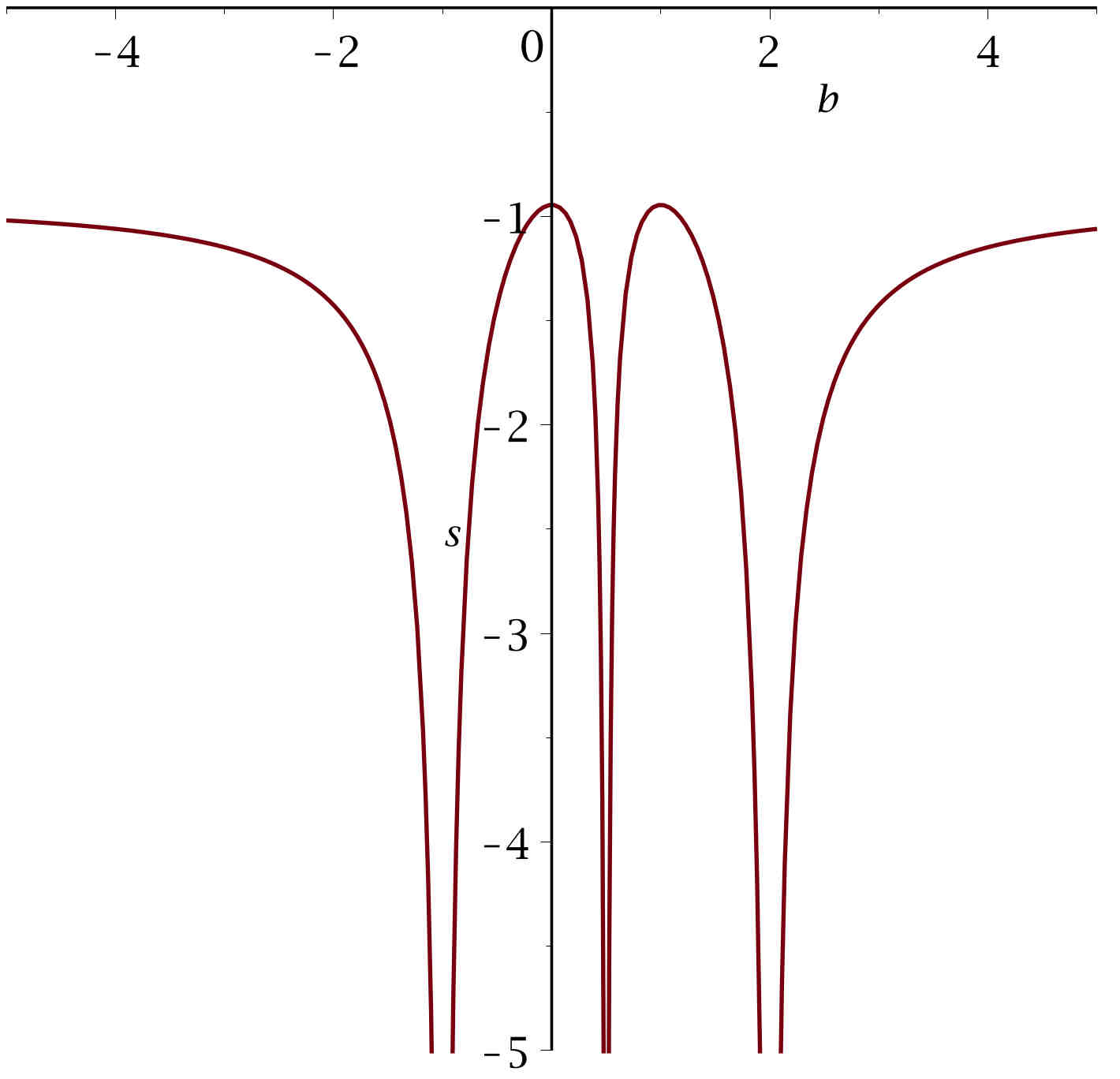}
\ \ \ \ \ \ \ 
\includegraphics[scale=0.30]{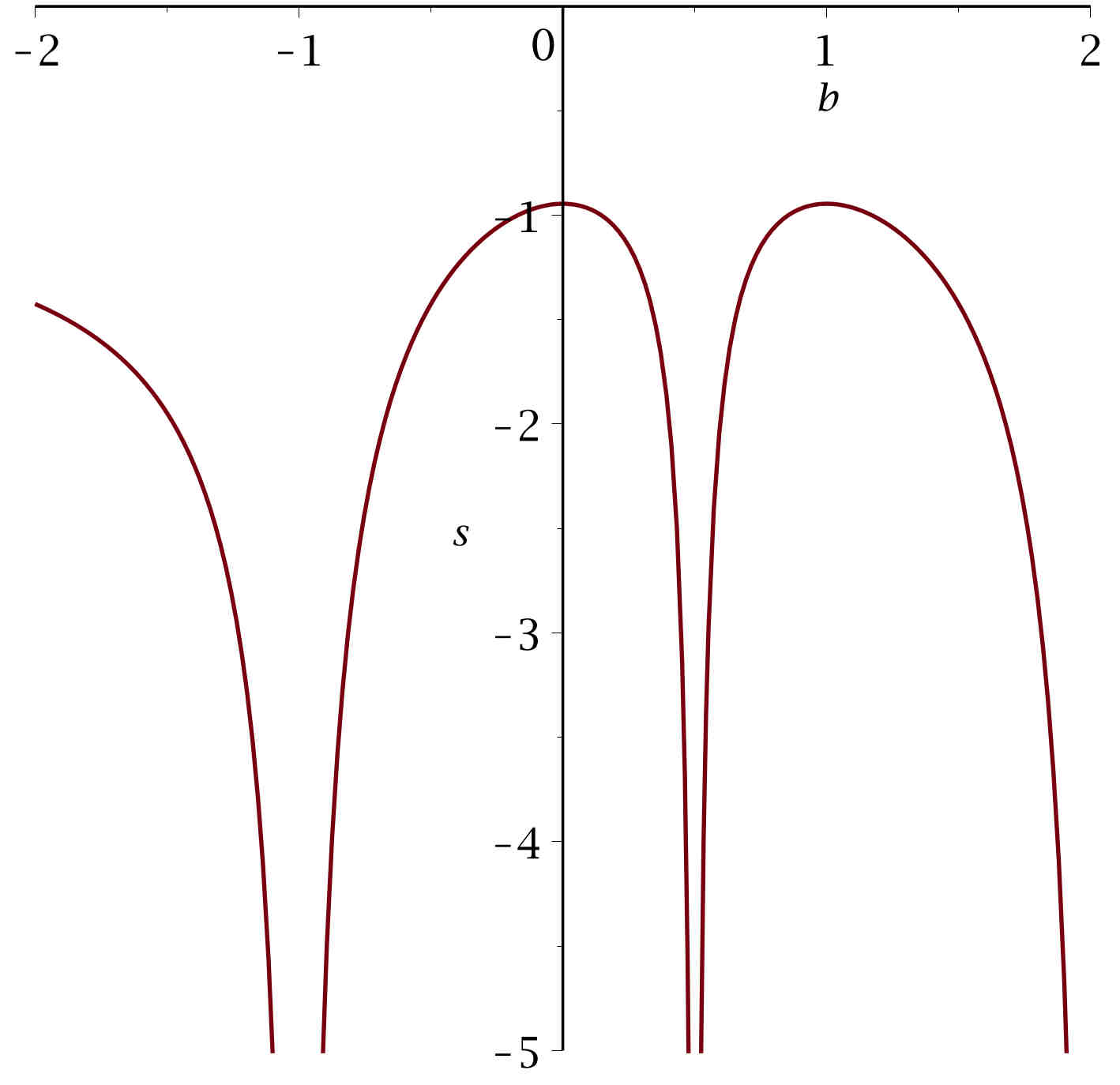}
\ \ \ \ \ \ \ 
\includegraphics[scale=0.30]{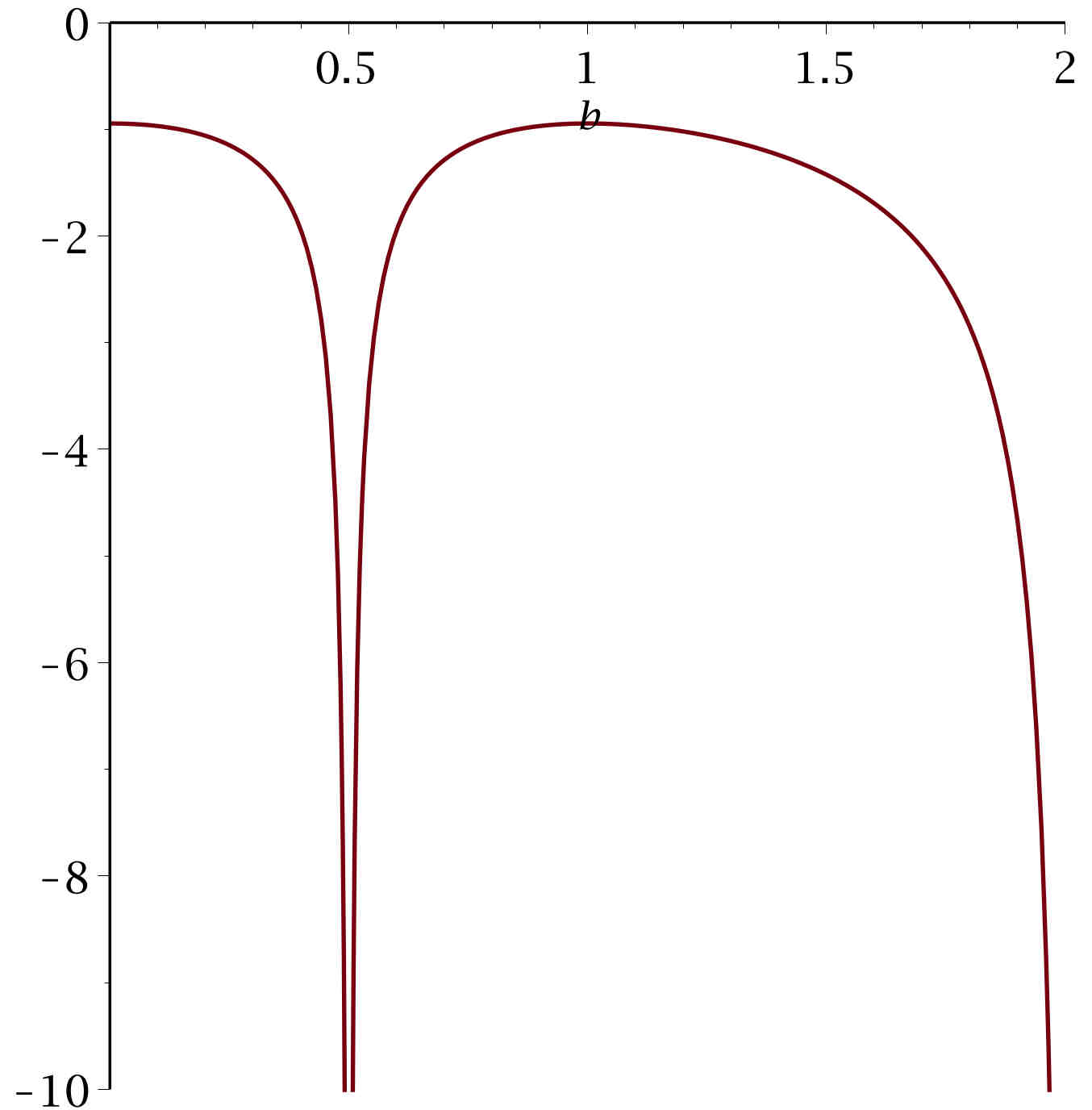}

\medskip\noindent
Looking at the range of the function $b\mapsto s=s(b)$ we see that the PDE system \eqref{sy2} realizes the EDS \eqref{homo2expl} for all the values of $s\leq -3\cdot (2)^{-\tfrac53}$. 
For this it is enough to take $1\leq b<2$.

\smallskip 

The last part of the proof is to show that the PDE system
\be
z_y=f(z_x)\,\,\&\,\,z_{xxx}=h(z_x)(z_{xx})^2,
\label{sy3}
\ee
with $f=f(z_x)$ and $h=h(x)$ as in \eqref{ef}-\eqref{eh} corresponds to a PDE para-CR structure realizing the system \eqref{homo2expl} with the parameter $s$ belonging to the remaining range $s>-3\cdot (2)^{-\tfrac53}$. This is a bit more tricky since we have no explicit dependence of $f$ and $h$ on $z_x$. To deal with this situation we observe that the general solution to the 
equations \eqref{sy3}, \eqref{ef}-\eqref{eh} is
$$z=\exp\Big(b\, \mathrm{arctan}\big(\tfrac{y+\bar{y}}{x+\bar{x}}\big)\Big)\sqrt{(x+\bar{x})^2+(y+\bar{y})^2}-\bar{z},$$
  where $\bar{x},\bar{y},\bar{z}$ are integration constants. It is now convenient to introduce a new variable
\[
u
=
\frac{y+\bar{y}}{x+\bar{x}}
\]
and to write down the para-CR coframe \eqref{iniom} for the para-CR structure associated with this PDE in terms of five variables $(y,\bar{x},\bar{y},\bar{z},u)$ rather than the variables $(x,y,z,z_x,z_{xx})$. This coframe reads:
  \be\begin{aligned}
  \om^1=&-\der\bar{z}+\frac{\exp\big(b\, \mathrm{arctan}\,u\big)}{\sqrt{1+u^2}}\Big((1-bu)\der\bar{x}+(b+u)\der\bar{y}\Big)\\
  \om^2=&\,\frac{\exp\big(b\, \mathrm{arctan}\,u\big)(1+b^2)u^2}{(1+u^2)^{\tfrac32}(y+\bar{y})}\Big(u\der\bar{x}-\der\bar{y}\Big)\\
  \om^3=&\,\frac{-\exp\big(b\, \mathrm{arctan}\,u\big)(1+b^2)u^3}{(1+u^2)^{\tfrac52}(y+\bar{y})^2}\Big((bu+3)u\der\bar{x}+(u^2-bu-2)\der\bar{y}\Big)\\
  \om^4=&\,\der\Big(\tfrac{y+\bar{y}}{u}-\bar{x}\Big)\\
  \om^5=&\,\der y.
  \end{aligned}\ee
  Now it is easy to see that the transformation $\om^\mu\mapsto \theta^\mu=S^\mu{}_\nu\om^\nu$ with
  $$S=\bma\tfrac{\epsilon a t^2 u}{3\sqrt{1+u^2}(y+\bar{y})}&0&0&0&0\\
  \tfrac{2a t b u}{3\sqrt{1+u^2}(y+\bar{y})}&\tfrac{a t\sqrt{1+u^2}}{u}&0&0&0\\
  \tfrac{\epsilon a (9+5b^2)u}{18(1+b^2)\sqrt{1+u^2}(y+\bar{y})}&\tfrac{2\epsilon a(3+2bu)\sqrt{1+u^2}}{3(1+b^2)u^2}&\tfrac{\epsilon a (1+u^2)^{\tfrac32}(y+\bar{y})}{(1+b^2)u^3}&0&0\\
  \tfrac{2\epsilon atb u}{3\sqrt{1+u^2}(y+\bar{y})}&0&0&\tfrac{-\epsilon t(1+b^2)u^2}{(1+u^2)(y+\bar{y})}&\tfrac{\epsilon t(1+b^2)u}{(1+u^2)(y+\bar{y})}\\
  \tfrac{-\epsilon a(9+5b^2)u}{18(1+b^2)\sqrt{1+u^2}(y+\bar{y}}&0&0&\tfrac{\epsilon u(bu-3)}{3(1+u^2)(y+\bar{y})}&\tfrac{-\epsilon u(b+3u)}{3(1+u^2)(y+\bar{y})}
  \ema,$$
  and with $a=\mathrm{e}^{-b\,\mathrm{arctan}\,u}$ and $t=\tfrac{\big(2b(9+b^2)\big)^{\tfrac13}}{3(1+b^2)}$,
  brings this coframe to a para-CR invariant coframe $\theta^\mu$ satisfying the homogeneous system \eqref{homo2expl} with 
\[
s
=
-\,\frac{3}{2}\,
\frac{b^2-3}{\big[2b(9+b^2)\big]^{2/3}},
\ \ \ \ \ \ \ \ \ \  
\frac{ds}{db}
\,=\,
-\,\frac{27}{2^{2/3}}\,
\frac{b^2+1}{\big[b\,(b^2+9)\big]^{5/3}}.
\]

\begin{center}
\includegraphics[scale=0.30]{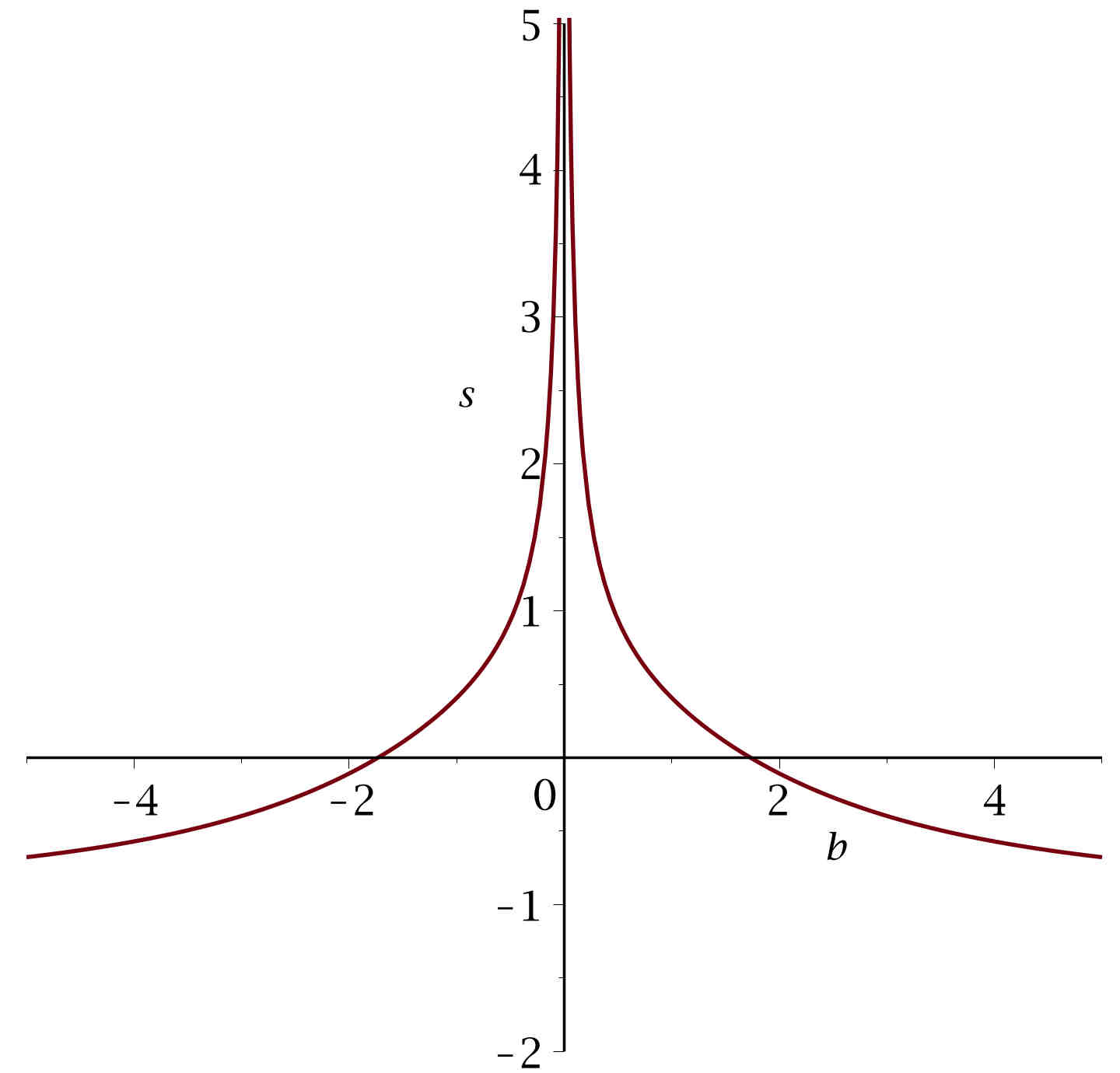}
\end{center}

\medskip\noindent
The range of this even function
$s \longmapsto b(s)$ is $s>-3\cdot (2)^{-\tfrac53}$. To achieve
all the values of $s>-3\cdot (2)^{-\tfrac53}$ it is enough to take
$b>0$.
\end{proof}

\section{Lie Symmetry Algebras}
\label{Lie-symmetry-algebras}

The point automorphism groups for cases (i), (ii), (iiia), (iiib)
can be determined infinitesimally. Indeed, a vector field
with unknown coefficients $A^i = A^i(x,y,z,p,r)$, $i = 1, \dots, 5$:
\[
X
\,:=\,
A^1\,\partial_x
+
A^2\,\partial_y
+
A^3\,\partial_z
+
A^4\,\partial_p
+
A^5\,\partial_r,
\]
should act on $1$-forms as the matrix~(2.4),
so that:
\begin{equation}
\aligned
\label{Lie-X-Omega}
0
&
\,=\,
\mathcal{L}_X(\omega^1)
\dz
\omega^1,
\\
0
&
\,=\,
\mathcal{L}_X(\omega^2)
\wedge
\omega^1\wedge\omega^2\wedge\omega^3,
\\
0
&
\,=\,
\mathcal{L}_X(\omega^3)
\wedge
\omega^1\wedge\omega^2\wedge\omega^3,
\\
0
&
\,=\,
\mathcal{L}_X(\omega^4)
\wedge
\omega^1\wedge\omega^4\wedge\omega^5,
\\
0
&
\,=\,
\mathcal{L}_X(\omega^5)
\wedge
\omega^1\wedge\omega^4\wedge\omega^5.
\endaligned
\end{equation}
For instance, in case (ii), the first equation writes:
\[
\aligned
\mathcal{L}_x(\omega^1)
\wedge
\omega^1
&
\,=\,
\der x\wedge\der y\,
\Big[
p\,
\big(
A_y^3
-
\tfrac{1}{4}\,p^2\,A_y^2
-
p\,A_y^1
-
\tfrac{1}{4}\,p\,A^4
-
\tfrac{1}{4}\,p\,A_x^3
+
\tfrac{1}{16}\,p^3\,A_x^2
+
\tfrac{1}{4}\,A_x^1
\big)
\Big],
\\
&
\ \ 
+
\der x\wedge\der z\,
\Big[
p\,A_z^3
-
\tfrac{1}{4}\,p^3\,A_z^2
-
p^2\,A_z^1
+
A_x^3
-
\tfrac{1}{4}\,p^2\,A_x^2
-
p\,A_x^1
-
A^4
\Big]
\\
&
\ \ 
+
\der x\wedge\der p\,
\Big[
p\,
\big(
A_p^3
-
\tfrac{1}{4}\,p^2\,A_p^2
-
p\,A_p^1
\big)
\Big]
+
\der x\wedge\der r\,
\Big[
p\,
\big(
A_r^3
-
\tfrac{1}{4}\,p^2\,A_r^2
-
p\,A_r^1
\big)
\Big]
\\
&
\ \ 
+
\der y\wedge\der z\,
\Big[
\tfrac{1}{4}\,
p^2\,A_z^3
-
\tfrac{1}{16}\,p^4\,A_z^2
-
\tfrac{1}{4}\,p^3\,A_z^1
+
A_y^3
-
\tfrac{1}{4}\,p^2\,A_y^2
-
p\,A_y^1
-
\tfrac{1}{2}\,A^4
\Big]
\\
&
\ \ 
+
\der y\wedge\der p\,
\Big[
p^2\,
\big(
\tfrac{1}{4}\,A_p^3
-
\tfrac{1}{16}\,p^2\,A_p^2
-
\tfrac{1}{4}\,p\,A_p^1
\big)
\Big]
+
\der y\wedge\der r\,
\Big[
p^2\,
\big(
\tfrac{1}{4}\,A_r^3
-
\tfrac{1}{16}\,p^2\,A_r^2
-
\tfrac{1}{4}\,p\,A_r^1
\big)
\Big]
\\
&
\ \ 
+
\der z\wedge\der p\,
\Big[
-A_p^3
+
\tfrac{1}{4}\,p^2\,A_p^2
+
p\,A_p^1
\Big]
+
\der z\wedge\der r\,
\Big[
-A_r^3
+
\tfrac{1}{4}\,p^2\,A_r^2
+
p\,A_r^1
\Big].
\endaligned
\]
Solving this linear system of partial differential equations,
we get

\begin{corollary}
The Lie algebra of infinitesimal point automorphisms of the flat model
{\bf (i)} is simple, isomorphic to $\mathfrak{so}_{3,2}(\R)$, with the
$10$ generators:
\[
\aligned
X_1
&
\,:=\,
xy\,\partial_x
+
y^2\,\partial_y
-
x^2\,\partial_z
-
(py+2\,x)\,
\partial_p
-
(2\,ry+2)\,
\partial_r,
\\
X_2
&
\,:=\,
-\,(x^2-yz)\,
\partial_x
-
2\,xy\,\partial_y
-
2\,xz\,\partial_z
-
\big(
\tfrac{1}{2}\,p^2y
+
2\,z
\big)\,
\partial_p
-
\big(
pry
-
2\,rx
+
2\,p
\big)\,
\partial_r,
\\
X_3
&
\,:=\,
y\,\partial_x
-
2\,x\,\partial_z
-
2\,\partial_p,
\\
X_4
&
\,:=\,
xz\,\partial_x
-
x^2\,\partial_y
+
z^2\,\partial_z
-
\big(
\tfrac{1}{2}\,
p^2x
-
pz
\big)\,
\partial_p
+
\big(
\tfrac{1}{2}\,
p^2
-
prx
\big)\,
\partial_r,
\\
X_5
&
\,:=\,
z\,\partial_x
-
2\,x\,\partial_y
-
\tfrac{1}{2}\,p^2\,\partial_p
-
pr\,\partial_r,
\\
X_6
&
\,:=\,
x\,\partial_x
+
2\,z\,\partial_z
+
p\,\partial_p,
\\
X_7
&
\,:=\,
\partial_x,
\\
X_8
&
\,:=\,
y\,\partial_y
-
z\,\partial_z
-
p\,\partial_p
-
r\,\partial_r,
\\
X_9
&
\,:=\,
\partial_y,
\\
X_{10}
&
\,:=\,
\partial_z,
\endaligned
\]
having commutator table:
\[
\begin{tabular} [t] { l | c c c c c c c c c c }
& $X_1$ & $X_2$ & $X_3$ & $X_4$ & $X_5$
& $X_6$ & $X_7$ & $X_8$ & $X_9$ & $X_{10}$
\\
\hline
$X_1$ & $0$ & $0$ & $0$ & $0$ & $-X_2$ 
& 
$0$ & $-X_3$ & $-X_1$ & $-X_6-2X_8$ & $0$
\\
$X_2$ & $*$ & $0$ & $2X_1$ & $0$ & $2X_4$ 
& 
$-X_2$ & $2X_6+2X_8$ & $0$ & $-X_5$ & $-X_3$
\\
$X_3$ & $*$ & $*$ & $0$ & $X_2$ & $-2X_8$ 
& 
$X_3$ & $2X_{10}$ & $-X_3$ & $-X_7$ & $0$
\\
$X_4$ & $*$ & $*$ & $*$ & $0$ & $0$ 
& 
$-2X_4$ & $-X_5$ & $X_4$ & $0$ & $-X_6$
\\
$X_5$ & $*$ & $*$ & $*$ & $*$ & $0$ 
& 
$-X_5$ & $2X_9$ & $X_5$ & $0$ & $-X_7$
\\
$X_6$ & $*$ & $*$ & $*$ & $*$ & $*$ 
& 
$0$ & $-X_7$ & $0$ & $0$ & $-2X_{10}$
\\
$X_7$ & $*$ & $*$ & $*$ & $*$ & $*$ 
& 
$*$ & $0$ & $0$ & $0$ & $0$
\\
$X_8$ & $*$ & $*$ & $*$ & $*$ & $*$ 
& 
$*$ & $*$ & $0$ & $-X_9$ & $X_{10}$
\\
$X_9$ & $*$ & $*$ & $*$ & $*$ & $*$ 
& 
$*$ & $*$ & $*$ & $0$ & $0$
\\
$X_{10}$ & $*$ & $*$ & $*$ & $*$ & $*$ 
& 
$*$ & $*$ & $*$ & $*$ & $0$
\end{tabular}
\]
\end{corollary}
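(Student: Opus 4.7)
The plan has three parts: set up the linear system imposed by~\eqref{Lie-X-Omega} for the flat model, extract the $10$ generators listed, and then verify the commutator table together with the identification of the abstract algebra.

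First, I substitute the coframe $\omega^1, \dots, \omega^5$ of~\eqref{flfo} specialised to $F = \tfrac14 p^2$ and $H = 0$ into the five Lie conditions~\eqref{Lie-X-Omega}, applied to an unknown vector field $X = A^1 \partial_x + A^2 \partial_y + A^3 \partial_z + A^4 \partial_p + A^5 \partial_r$ with coefficients $A^i(x,y,z,p,r)$. Splitting each resulting $3$-form on the coordinate wedge basis $\der x^i \wedge \der x^j$ gives a linear, overdetermined, completely integrable system of first-order PDEs. The first condition already suffices to express the prolonged components $A^4, A^5$ in terms of $A^1, A^2, A^3$ and their first derivatives (this is the standard second-jet prolongation of a point transformation), after which the remaining four conditions cut the $(x,y,z,p,r)$-dependence of $A^1, A^2, A^3$ down to polynomial expressions of bounded degree.

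A priori, Theorem~\ref{th1.1} has established that the flat para-CR structure corresponds to a flat $\mathfrak{so}(3,2)$-valued Cartan connection on $\mathcal{G} \to M$, so its infinitesimal point-symmetry algebra is isomorphic to $\mathfrak{so}_{3,2}(\R)$ and has dimension exactly $10$. Consequently the solution space of~\eqref{Lie-X-Omega} is $10$-dimensional, and to produce an explicit basis it suffices to check that the ten proposed vector fields $X_1, \dots, X_{10}$ solve~\eqref{Lie-X-Omega}, which is a direct if lengthy substitution verifying $\mathcal{L}_{X_i}(\omega^j)$ against the appropriate ideals of $3$-forms, and that they are linearly independent over $\R$, which is immediate upon inspecting the coefficient matrix at the origin.

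The verification of the commutator table is then a routine computation of the $45$ brackets $[X_i, X_j]$ of polynomial vector fields. The expected main obstacle is purely organisational: the brackets involving the quadratic generators $X_1, X_2, X_4$ produce lengthy intermediate expressions whose simplification must be carried out carefully. Once the table is confirmed, the identification with $\mathfrak{so}_{3,2}(\R)$ follows by noting that $\{X_6, X_8\}$ spans a $2$-dimensional abelian subalgebra acting diagonally on the remaining eight generators with eight distinct non-zero weights that realise the $B_2$ root system; the real form is then pinned down to $\mathfrak{so}_{3,2}(\R)$ rather than a compact or other real alternative by computing the Killing form signature in the basis $(X_i)$, or equivalently by exhibiting an explicit linear isomorphism with the $5 \times 5$ matrices antisymmetric with respect to a quadratic form of signature $(3,2)$.
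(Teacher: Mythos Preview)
Your proposal is correct and follows the same approach as the paper, which simply states ``Solving this linear system of partial differential equations, we get'' before presenting the corollary; your write-up supplies the natural details (a priori dimension count via Theorem~\ref{th1.1}, direct verification of the ten fields, bracket computation, and identification of the real form). One cosmetic slip: the first condition $\mathcal{L}_X(\omega^1)\wedge\omega^1$ is a $2$-form and the remaining four are $4$-forms, so ``$3$-form'' should be adjusted accordingly.
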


In the CR context, 
observe that if $S^2 \subset \R^3$ is an affinely homogeneous
parabolic surface, then the tube $M^5 := S^2 \times i\R^3$
has transitive holomorphic symmetry algebra $\mathfrak{hol}(M)$,
with an {\em Abelian ideal} $\mathfrak{a} := {\rm Span}\,
\{ i\partial_{z_1}, i\partial_{z_2}, i\partial_w\}$.
Conversely, for an $M^5 \in \mathfrak{C}_{2,1}$, 
it is not difficult to show that if $\mathfrak{hol}
(M) \supset \mathfrak{a}$ contains an Abelian ideal $\mathfrak{a}$
with ${\rm rank}_\C \mathfrak{a} = 3$, then $M^5 \cong S^2 \times
i\R^3$ is biholomorphically equivalent to the tube over an affinely
homogeneous parabolic surface $S^2 \subset \R^3$.

In the para-CR context, 
as can be read off from commutator tables,
all the Lie algebras in cases 
{\bf (i)}, {\bf (ii)}, {\bf (iiia)}, {\bf (iiib)}
have a $3$-dimensional abelian ideal.
\begin{corollary}
The Lie algebras of infinitesimal point automorphisms of the
homogeneous models {\bf (ii)}, {\bf (iiia)}, {\bf (iiib)} are all
$5$-dimensional and solvable, and are given in the $(x,y,z,
p,r)$-space by the following generators together with their Lie
brackets:
\begin{footnotesize}
\[
\begin{array}{ll}
\aligned
X_1
& 
:= 
x\,\partial_x 
+ 
\tfrac{1}{2}\,y\,
\partial_y 
+ 
\tfrac{3}{2}\,z\,
\partial_z 
+ 
\tfrac{1}{2}\,p\,
\partial_p 
- 
\tfrac{1}{2}\,r\,
\partial_r,
\\
X_2
&
\,:=\,
y\,\partial_x
-
2\,x\,\partial_z
-
2\,\partial_p,
\\
\text{\bf (ii)}
\ \ \ \ \ \ \ 
X_3
&
\,:=\,
\partial_x,
\\
X_4
&
\,:=\,
\partial_y,
\\
X_5
&
\,:=\,
\partial_z,
\endaligned
\ \ \ \ \ \ \ \ \ \ \ \ \ \ \ 
\aligned
\begin{tabular} [t] { l | c c c c c }
& $X_1$ & $X_2$ & $X_3$ & $X_4$ & $X_5$
\\
\hline
$X_1$ & $0$ & $-\frac{1}{2}X_2$ & $-X_3$ & $-\frac{1}{2}X_4$ & 
$-\frac{3}{2}X_5$
\\
$X_2$ & $*$ & $0$ & $2X_5$ & $-X_3$ & $0$
\\
$X_3$ & $*$ & $*$ & $0$ & $0$ & $0$
\\
$X_4$ & $*$ & $*$ & $*$ & $0$ & $0$
\\
$X_5$ & $*$ & $*$ & $*$ & $*$ & $0$ 
\end{tabular}
\endaligned
\end{array}
\]

\[
\begin{array}{ll}
\aligned
X_1
& 
:= 
x\,\partial_x
+
\frac{b\,z}{b-1}\,
\partial_z
+
\frac{p}{b-1}\,
\partial_p
-
\frac{r(b-2)}{b-1}\,
\partial_r,
\\
X_2
&
\,:=\,
y\,\partial_y
-
\frac{z}{b-1}\,\partial_z
-
\frac{p}{b-1}\,\partial_p
-
\frac{r}{b-1}\,\partial_z,
\\
\text{\bf (iiia)}
\ \ \ \ \ \ \ 
X_3
&
\,:=\,
\partial_x,
\\
X_4
&
\,:=\,
\partial_y,
\\
X_5
&
\,:=\,
\partial_z,
\endaligned
\ \ \ \ \ \ \ \ \ \ \ \ \ \ \ 
\aligned
\begin{tabular} [t] { l | c c c c c }
& $X_1$ & $X_2$ & $X_3$ & $X_4$ & $X_5$
\\
\hline
$X_1$ & $0$ & $0$ & $-X_3$ & $0$ & $-\frac{b}{b-1}X_5$
\\
$X_2$ & $*$ & $0$ & $0$ & $-X_4$ & $\frac{1}{b-1}X_5$
\\
$X_3$ & $*$ & $*$ & $0$ & $0$ & $0$
\\
$X_4$ & $*$ & $*$ & $*$ & $0$ & $0$
\\
$X_5$ & $*$ & $*$ & $*$ & $*$ & $0$ 
\end{tabular}
\endaligned
\end{array}
\]

\[
\begin{array}{ll}
\aligned
X_1
& 
:= 
x\,\partial_x
+
y\,\partial_y
+
z\,\partial_z
-
r\,\partial_r,
\\
X_2
&
\,:=\,
-y\,\partial_x
+
x\,\partial_y
+
\omega\,z\,\partial_z
+
\big(-F+\omega\,p)\,\partial_p
+
(-2DF+\omega\,r)\,\partial_r
\\
\text{\bf (iiib)}
\ \ \ \ \ \ \ 
X_3
&
\,:=\,
\partial_x,
\\
X_4
&
\,:=\,
\partial_y,
\\
X_5
&
\,:=\,
\partial_z,
\endaligned
\ \ \ \ \ \ \
\aligned
\begin{tabular} [t] { l | c c c c c }
& $X_1$ & $X_2$ & $X_3$ & $X_4$ & $X_5$
\\
\hline
$X_1$ & $0$ & $0$ & $-X_3$ & $-X_4$ & $-X_5$
\\
$X_2$ & $*$ & $0$ & $-X_4$ & $X_3$ & $-\omega X_5$
\\
$X_3$ & $*$ & $*$ & $0$ & $0$ & $0$
\\
$X_4$ & $*$ & $*$ & $*$ & $0$ & $0$
\\
$X_5$ & $*$ & $*$ & $*$ & $*$ & $0$ 
\end{tabular}
\endaligned
\end{array}
\]
\end{footnotesize}
\end{corollary}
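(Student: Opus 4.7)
The plan is to solve the overdetermined linear system~\eqref{Lie-X-Omega} for the coefficients $A^i(x,y,z,p,r)$ of an unknown vector field $X = A^1\partial_x + A^2\partial_y + A^3\partial_z + A^4\partial_p + A^5\partial_r$ in each of the cases \textbf{(ii)}, \textbf{(iiia)}, \textbf{(iiib)}. Since Proposition~\ref{classs} already tells us that each such homogeneous model has a local symmetry group of dimension exactly $5$, I expect precisely five linearly independent solutions per case.

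First, I would exploit the wedge--product condition $\mathcal{L}_X(\omega^1) \wedge \omega^1 = 0$: this forces $\mathcal{L}_X(\omega^1)$ to be proportional to $\omega^1$, which is equivalent to saying that $X$ preserves the contact distribution $\{\omega^1 = 0\}$. As in the standard theory of point transformations of second-order PDEs, this implies that $A^1,A^2,A^3$ depend only on $(x,y,z)$, and that $A^4$ is the first contact prolongation of the infinitesimal point transformation $Y := A^1\partial_x + A^2\partial_y + A^3\partial_z$. Similarly, the wedge--product conditions on $\mathcal{L}_X(\omega^2)$ (together with those on $\omega^4$) force $A^5$ to be the second prolongation of $Y$. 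Thus $X$ is completely determined as the second-order point prolongation $Y^{(2)}$ of $Y$, and the remaining conditions on $\mathcal{L}_X(\omega^3)$ and $\mathcal{L}_X(\omega^5)$ express precisely that $Y$ is a point symmetry of the PDE system $z_y = F$, $z_{xxx} = H$.

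Next, I would treat the three cases in turn. In case \textbf{(ii)} with $F = \tfrac14 p^2$ and $H = r^3$, substituting $Y^{(2)}$ into the surviving conditions gives a linear PDE system on $A^1,A^2,A^3$ whose coefficients are polynomials in $p,r$; equating coefficients of distinct monomials in $(p,r)$ yields a sequence of PDEs on the $(x,y,z)$-dependence, which integrates to exactly the five generators $X_1,\dots,X_5$ listed. In case \textbf{(iiia)} with $F = \tfrac14 p^b$ and $H = (2-b)r^2/p$, the computation is structurally identical but the parameter $b$ enters the weights of the scaling generators $X_1,X_2$. Case \textbf{(iiib)} is handled by guessing the generators from the explicit form of the general solution $z + \bar{z} = \exp\bigl(b\,\arctan\tfrac{y+\bar{y}}{x+\bar{x}}\bigr)\sqrt{(x+\bar{x})^2+(y+\bar{y})^2}$ recalled in the proof of Theorem~\ref{theorem-all-homogeneous-models}: translations in $\bar{x},\bar{y},\bar{z}$ give $X_3,X_4,X_5$, the simultaneous dilation of all three bar-variables gives $X_1$, and the rotation--dilation in the $(x,y)$-plane with weight $b$ gives $X_2$.

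The main obstacle lies in case \textbf{(iiib)}, where $f$ is only implicitly defined by~\eqref{ef}; rather than trying to produce an explicit formula for $f$, I would verify that $X_2$ preserves the PDE $z_y = f(z_x)$ by differentiating the defining relation~\eqref{ef} along the flow of $X_2$ and checking that the resulting consistency identity between $f$ and $f'$ holds as a differential consequence of~\eqref{ef} itself (this is what fixes the weight $\omega = b$ appearing in $X_2$, and what forces $h$ to take the specific form~\eqref{eh}). Once the five generators are found in each case, the commutator tables are confirmed by direct evaluation of $[X_i,X_j]$ on the coordinate functions $(x,y,z,p,r)$ and matching against the listed structure constants; solvability is then immediate since in each table the derived algebra is contained in the abelian ideal $\mathrm{Span}(X_3,X_4,X_5)$.
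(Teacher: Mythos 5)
Your proposal follows essentially the same route as the paper: the paper's proof consists precisely of setting up the linear system \eqref{Lie-X-Omega} for the coefficients $A^i(x,y,z,p,r)$, expanding the wedge conditions (it displays the expansion of $\mathcal{L}_X(\omega^1)\wedge\omega^1$ for case \textbf{(ii)} as an illustration), and then solving the resulting linear PDE system by equating coefficients of the monomials in $(p,r)$. Your additional observation that the five conditions force $X$ to be the second prolongation of a point vector field $Y=A^1\partial_x+A^2\partial_y+A^3\partial_z$ (note that it is really the conditions on $\omega^4,\omega^5$ that kill the $p,r$-dependence of $A^1,A^2$, not the $\omega^1$ condition alone) is a correct and useful organizing principle, and your tactic for \textbf{(iiib)} of reading the generators off the translations and the rotation--dilation of the explicit general solution is a legitimate shortcut consistent with the paper's realization of that model.

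One concrete slip: your closing justification of solvability, namely that \emph{in each table} the derived algebra lies in the abelian ideal $\mathrm{Span}(X_3,X_4,X_5)$, fails for case \textbf{(ii)}. There $[X_1,X_2]=-\tfrac12 X_2$, so the derived algebra is $\mathrm{Span}(X_2,X_3,X_4,X_5)$, which is four-dimensional and not abelian ($[X_2,X_3]=2X_5$). The algebra is nevertheless solvable --- the second derived algebra is $\mathrm{Span}(X_3,X_5)$ and the third vanishes --- so the conclusion stands, but the argument needs one more step of the derived series in that case; for \textbf{(iiia)} and \textbf{(iiib)} your one-step argument is fine.
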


\section{Link with Homogeneous Parabolic Surfaces}
\label{link-homogeneous-parabolic-surfaces}

We close this article by some comments on the classification
Theorem~{\ref{theorem-all-homogeneous-models}}.
Para-CR structures can be defined either from the point of view of
{\sl submanifolds of solutions}~{\cite{Merker-2008}},
of from the point of view of {\sl exterior differential 
systems}~{\cite{Hill-Nurowski-2010}}. From the first viewpoint, 
after `dividing' by the $3$-dimensional Abelian ideal
present in all the homogeneous models 
{\bf (i)}, {\bf (ii)}, {\bf (iiia)}, {\bf (iiib)},
namely after passing to a quotient, 
one can convince oneself that
one obtains {\em affinely homogeneous surfaces
$S^2 \subset \R^3$}.

The complete classification of $A_3(\R)$-homogeneous surfaces $S^2
\subset \R^3$ was terminated by
Doubrov-Komrakov-Rabinovich~{\cite{Doubrov-Komrakov-Rabinovich-1996}},
and re-done by Eastwood-Ezhov in~{\cite{Eastwood-Ezhov-1999}} who
employed the power series method.  This classification incorporates the
classification of $A_3(\R)$-homogeneous {\em parabolic} surfaces,
namely surfaces locally graphed as $z = F(x,y)$ with
\[
F_{xx}
\,\neq\,0
\,\equiv\,
\left\vert\!
\begin{array}{cc}
F_{xx} & F_{xy}
\\
F_{yx} & F_{yy}
\end{array}
\!\right\vert.
\]
From~{\cite{Fels-Kaup-2008}}, the list is as follows.

\smallskip\noindent{\bf (1)}\,
$\big\{ x_1^2 + x_2^2 = x_3^2, \,\, 
x_3 >0 \big\}$ the future light cone,
having infinitesimal symmetries 
$x_1 \partial_{x_1} + x_2 \partial_{x_2} + x_3 \partial_{x_3}$,
$-x_2\partial_{x_1} + x_1 \partial_{x_2}$;

\smallskip\noindent{\bf (2a)}\,
$\big\{ r ( \cos t, \sin t, e^{ \omega t})\in \mathbb{ R}^3: \, r
\in \mathbb{ R}^+ \ \text{\rm and}\ t \in \mathbb{ R} \big\}$ with
$\omega > 0$ arbitrary, graphed as
$u = \sqrt{x^2+y^2}\,
e^{\omega \arctan \frac{y}{x}}$, 
having symmetries $x\partial_x 
+ y \partial_y + u \partial_u$, $-y\partial_x + 
x\partial_y + \omega u \partial_u$;

\smallskip\noindent{\bf (2b)}\,
$\big\{ r ( 1, t, e^t) \in \mathbb{ R}^3: \, r \in \mathbb{ R}^+
\ \text{\rm and}\ t \in \mathbb{ R} \big\}$,
graphed as $u = x e^{\frac{y}{x}}$,
having symmetries 
$x \partial_x + y \partial_y + u \partial_u$, 
$x \partial_y + u \partial_u$;

\smallskip\noindent{\bf (2c)}\,
$\big\{ r ( 1, e^t, e^{ \theta t}) \in \mathbb{ R}^3 : \, r \in
\mathbb{ R}^+ \ \text{\rm and}\ t \in \mathbb{ R}
\big\}$ with $\theta > 2$
arbitrary, graphed as $u = x \big( \frac{y}{x} \big)^\theta$,
having symmetries
$x \partial_x - (\theta-1) u \partial_u$,
$y \partial_y + \theta u \partial_u$;

\smallskip\noindent{\bf (3)}\,
$\big\{ c ( t) + r c' ( t) \in \mathbb{ R}^3 : \, r \in \mathbb{ R}^+
\ \text{\rm and} \ t \in \mathbb{ R} \big\}$, where $c ( t) := (t,
t^2, t^3)$ parametrizes the {\em twisted cubic} $\{ (t, t^2, t^3): \,
t \in \mathbb{ R}\}$ in $\mathbb{ R}^3$ and $c'(t) = (1, 2t, 3t^2)$,
graphed as $u = -2x^3 + 3xy - 2(x^2-y)^{3/2}$, having symmetries
$x\partial_x + 2y \partial_y + 3u \partial_u$, $\partial_x + 2x
\partial_y + 3y \partial_u$.

\medskip

Conversely, from any parabolic surface $S^2 \subset \R^3$ graphed as
$z = F(x, y)$, one can introduce the 
{\sl tube} para-CR submanifold of solutions
$S^2 \times \R^3$ 
defined by $z + c = F (x+a, y+b)$, differentiate $z_x = F_x$, $z_{xx}
= F_{xx}$, solve for $(a, b, c)$ in these three equations, replace in
$z_y = F_y$, $z_{xxx} = F_{xxx}$, and get a PDE system of the kind
studied in the preceding sections.

Such submanifolds of solutions $z + c = F (x+a, y+b)$ being invariant
under translations along the parameter directions, one gets a {\em
homogeneous} PDE five-variables para-CR structure as soon as the
surface $z = F(x,y)$ is affinely homogeneous.
This observation applies to all cases 
{\bf (1)}, {\bf (2a)}, {\bf (2b)}, {\bf (2c)}, {\bf (3)}.

We leave as an exercise to verify that the PDE systems
{\bf (i)}, {\bf (ii)}, {\bf (iiia)}, {\bf (iiib)}
shown in
Theorem~{\ref{theorem-all-homogeneous-models}}
come from 
{\bf (1)}, {\bf (2a)}, {\bf (2b)}, {\bf (2c)}, {\bf (3)}
by this process, though rearranged differently.


In fact, irrespectively if we knew or not something about
classification of homogeneous parabolic
surfaces, Cartan's method have brought
us only two kinds Maurer-Cartan equations for homogeneous PDE five
variables para-CR structures, namely~({\ref{homo1expl}})
and~({\ref{homo2expl}}).  Thus, cases {\bf (i)}, {\bf (ii)}, {\bf
(iiia)}, {\bf (iiib)} were in fact 
{\em nicely unified in a smaller
number of cases in our first classification}, which is an advantage of
Cartan's reduction with respect to classical classification
results. In conclusion, Cartan's reduction is more natural, because it
groups a scattered number of homogeneous models into only one
$1$-parameter family.


\begin{thebibliography}{99}






\bibitem{Cartan-1932-I} 
Cartan, \'E.:
{\em Sur la g\'eom\'etrie pseudo-conforme des 
hypersurfaces de deux variables complexes I}, 
Annali di Matematica, {\bf 11} (1932), 17--90; 
{\OE}uvres Compl\`etes, Partie II, Vol. 2, 1231--1304.



\bibitem{Cartan-1932-II} 
Cartan, \'E.:
{\em Sur la g\'eom\'etrie pseudo-conforme des 
hypersurfaces de deux variables complexes II},
Annali Sc. Norm. Sup. Pisa, {\bf 1} (1932), 333--354. 
{\OE}uvres Compl\`etes, Partie III, Vol. 2, 1217--1238.


\bibitem{Chen-Foo-Merker-Ta-2019} 
Chen, Z.; Foo, W.G.; Merker, J.; Ta, T.A.: 
{\em Normal forms for rigid $\mathfrak{C}_{2,1}$ 
hypersurfaces $M^5 \subset \C^3$},
{\tiny\sf arxiv.org/abs/1912.01655/}


\bibitem{Chern-Moser-1974}
Chern, S.-S.; Moser, J.:
{\em Real hypersurfaces in complex manifolds},
Acta Math. {\bf 133} (1974), 219--271. 


\bibitem{Doubrov-Komrakov-Rabinovich-1996}
Doubrov, B.; Komrakov, B.; Rabinovich, M.:
{\em Homogeneous surfaces in the three-dimensional affine geometry}, 
Geometry and topology of submanifolds, VIII 
(Brussels, 1995/Nordfjordeid, 1995), 168--178, 
World Sci. Publ., River Edge, NJ, 1996.



\bibitem{Eastwood-Ezhov-1999} 
Eastwood, M.; Ezhov, V.:
{\em On affine normal forms and a classification of homogeneous
surfaces in affine three-space}, Geom. Dedicata {\bf 77} (1999),
no.~1, 11--69.


\bibitem{Foo-Merker-2019}
Foo, W.G.; Merker, J.:
{\em Differential $\{e\}$-structures for equivalences
of $2$-nondegenerate Levi rank $1$ hypersurfaces $M^5 \subset \C^3$},
{\tiny\sf arxiv.org/abs/1901.02028/}

\bibitem{Gaussier-Merker-2003}
Gaussier, H.; Merker, J.:
{\em A new example of uniformly Levi degenerate hypersurface in 
$\C^3$}, Ark. Mat. {\bf 41} (2003), no.~1, 85--94.
Erratum: {\bf 45} (2007), no.~2, 269--271.



\bibitem{Fels-Kaup-2007}
Fels, M.; Kaup, W.:
{\em CR manifolds of dimension $5$: a Lie algebra approach},
J. Reine Angew. Math. {\bf 604} (2007), 47--71.

\bibitem{Fels-Kaup-2008}
Fels, M.; Kaup, W.:
{\em Classification of Levi degenerate homogeneous CR-manifolds in 
dimension $5$},
Acta Math. {\bf 201} (2008), 1--82.




\bibitem{Foo-Merker-Ta-2019}
Foo, W.G.; Merker, J.; Ta, T.-A.:
{\em On convergent Poincar\'e-Moser reduction
for Levi degenerate embedded $5$-dimensional CR manifolds},
{\tiny\sf arxiv.org/abs/2003.01952/}

\bibitem{Freeman-1977} 
Freeman, M.:
{\em Real submanifolds with degenerate Levi form}, 
Several complex variables, Proc. Sympos. Pure Math., Vol.
XXX, Williams Coll., Williamstown, Mass., 1975,
Part 1, Amer. Math. Soc., Providence, R.I., 1977, pp.~141-147.

\bibitem{Godlinski-Nurowski-2009} 
Godlinski, M.; Nurowski, P.: 
{\em Geometry of third order ODEs}, 
{\tiny\sf arxiv.org/abs/0902.4129/}

\bibitem{Hachtroudi-1937}
Hachtroudi, M.:
{\em Les espaces d'\'el\'ements \`a connexion projective normale},
Actualit\'es Scientifiques et Industrielles, vol.~565, Paris, Hermann, 
1937. 



\bibitem{Hill-Nurowski-2010} 
Hill, C.D.; Nurowski, P.: 
{\em Differential equations and para-CR structures}, 
Boll. Unione Mat. Ital., (9) III (2010), no.~1,
25--91.








\bibitem{Loboda-2001-Sbornik} 
Loboda, A.V.:
{\em Homogeneous strictly pseudoconvex hypersurfaces in
$\C^3$ with two-dimensional isotropy groups},
Mat. Sb. {\bf 192} (2001), no.~12, 3--24 (Russian). 
Translation in Sb. Math. {\bf 192} (2001), no.~11--12, 1741--1761



\bibitem{Medori-Spiro-2014}
Medori, C.; Spiro, A.: 
{\em The equivalence problem for 5-dimensional Levi degenerate 
CR manifolds}, Int. Math. Res. Not. IMRN {\bf 2014},
no.~20, 5602--5647. 



\bibitem{Merker-2008}
Merker, J.:
{\em Lie symmetries of partial differential equations
and CR geometry}, Journal of Mathematical
Sciences (N.Y.), {\bf 154} (2008), 817--922.

\bibitem{Merker-Nurowski-2018}
Merker, J.; Nurowski, P.:
{\em Equivalences of PDE systems associated to para-CR structures},
June 2018.

\bibitem{Merker-Nurowski-2019}
Merker, J.; Nurowski, P.:
{\em New explicit Lorentzian Einstein-Weyl structures 
in 3-dimensions},
{\tiny\sf arxiv.org/abs/1906.10880/}

\bibitem{Merker-Nurowski-2020}
Merker, J.; Nurowski, P.:
{\em CR and para-CR symmmetries: a survey},
in preparation.

\bibitem{Merker-Pocchiola-2018}
Merker, J.; Pocchiola, S.:
{\em Explicit absolute parallelism for $2$-nondegenerate real
hypersurfaces $M^5 \subset \C^3$ of constant Levi rank $1$},
Journal of Geometric Analysis, 
{\tiny\sf 10.1007/s12220-018-9988-3}, 
42~pages. Addendum: 10.1007/s12220-019-00195-2, 10~pages.



\bibitem{Nurowski-2005} 
Nurowski P.:
{\em Differential equations and conformal structures},
Journ. Geom. Phys. {\bf 55} (2005), 19--49.

\bibitem{Nurowski-Sparling-2003} 
Nurowski P.; Sparling, G.:
{\em Three-dimensional Cauchy-Riemann structures and
second order ordinary differential equations},
Classical Quantum Gravity {\bf 20} (2003), no.~23, 4995--5016.

\bibitem{Nurowski-Tafel-1988} 
Nurowski P.; Tafel, J.:
{\em Symmetries of Cauchy-Riemann spaces}, 
Letters in Mathematical Physics, {\bf 15} (1988), 31--38.






\bibitem{Porter-2015}
Porter, C.:
{\em The local equivalence problem for $7$-dimensional
$2$-nondegenerate CR manifolds whose cubic form
is of conformal type}, 
{\tiny\sf arxiv.org/abs/1511.04019/}

\bibitem{Porter-Zelenko-2017}
Porter, C.; Zelenko, I.:
{\em Absolute parallelism for $2$-nondegenerate CR structures
via bigraded Tanaka prolongation},
{\tiny\sf arxiv.org/abs/1704.03999/}

\bibitem{Segre-1931-a} 
Segre, B.:
{\em Intorno al problema di Poincar\'e della 
rappresentazione pseudoconforme}, 
Rend. Acc. Lincei, VI, Ser. {\bf 13} (1931), 676--683.









\end{thebibliography}
\end{document}